\documentclass[11pt, oneside]{article}   	
\usepackage{amsmath, amsthm, amsfonts, amssymb}
\usepackage{graphicx}
\usepackage{subcaption}
\usepackage[colorlinks,citecolor=blue,urlcolor=blue]{hyperref}
\usepackage{mdwlist}
\usepackage{relsize}
\usepackage{enumerate}
\setlength{\textheight}{230mm}
\setlength{\textwidth}{142mm}
\setlength{\topmargin}{-10mm}
\setlength{\oddsidemargin}{10mm}
\setlength{\evensidemargin}{10mm}
 
\newcommand\Ex{{\mathbb E}}

\newcommand\Prob{{\mathbb P}}

\newcommand\cA{{\mathcal A}}

\newcommand\cC{{\mathcal C}}

\newcommand\cK{{\mathcal K}}
\newcommand\cM{{\mathcal M}}

\newcommand\cP{{\mathcal P}}

\newcommand\cY{{\mathcal Y}}
\newcommand\cZ{{\mathcal Z}}

\newcommand\cQ{{\mathcal Q}}

\newcommand\N{{\mathbb N}}
\newcommand\Z{{\mathbb Z}}
\newcommand\R{{\mathbb R}}

\newcommand\bH{{\mathbb H}}

%mathfrank
\newcommand\fX{{\mathfrak X}}

%mathbf

\newcommand\x{{\mathbf x}}
\newcommand\y{{\mathbf y}}
\newcommand\z{{\mathbf z}}

\newcommand\A{{\mathbf A}}
\newcommand\B{{\mathbf B}}
\newcommand\J{{\mathbf J}}
\newcommand\C{{\mathbf C}}

\newcommand\vto{\overset{v}{\to}}

\newcommand\dist{\overset{d}{=}}
\newcommand\one{{\bf 1}}

\DeclareMathOperator{\supp}{supp}

\newtheorem{theorem}{Theorem}[section]
\newtheorem{corollary}[theorem]{Corollary}
\newtheorem{lemma}[theorem]{Lemma}
\newtheorem{proposition}[theorem]{Proposition}

\theoremstyle{definition}
\newtheorem{definition}[theorem]{Definition}

\theoremstyle{remark}
\newtheorem{remark}[theorem]{Remark}

\providecommand{\keywords}[1]{{\textit{Keywords:}} #1}
\begin{document}
\title {Strong law of large numbers for Betti numbers in the thermodynamic regime}

\author{Akshay Goel \and Khanh Duy Trinh \and Kenkichi Tsunoda }
\newcommand{\Addresses}{{% additional braces for segregating \footnotesize
  \bigskip
  \footnotesize

  A.G., \textsc{Faculty of Mathematics, Kyushu University, Japan}\par\nopagebreak
  \textit{E-mail address}: \texttt{a-goel@math.kyushu-u.ac.jp}

  \medskip

  K.D.T., \textsc{Research Alliance Center for Mathematical Sciences, Tohoku University, Japan}\par\nopagebreak
  \textit{E-mail address}: \texttt{trinh.khanh.duy.a3@tohoku.ac.jp}

  \medskip

    K.T., \textsc{Department of Mathematics, Graduate School of Science, Osaka University, Japan}\par\nopagebreak
  \textit{E-mail address}: \texttt{k-tsunoda@math.sci.osaka-u.ac.jp}
}}
\maketitle{}

%\lipsum
%Akshay Goel\\
%Faculty of Mathematics,\\
%Kyushu University, Japan\\
%Email:
%
%Kenkichi Tsunoda\\
%Department of Mathematics,\\ 
%Graduate School of Science, \\
%Osaka University, Japan\\
%Email: k-tsunoda@math.sci.osaka-u.ac.jp
%
%
%Khanh Duy Trinh\\
%Research Alliance Center for Mathematical Sciences,\\
%Tohoku University, Japan\\
%Email: trinh.khanh.duy.a3@tohoku.ac.jp
%
%\maketitle{}
%%%%%%%%%%%%%%%%%%%%%%%%%%%%%%%%%%%%%%%%%%%%%
\begin{abstract}
We establish the strong law of large numbers for Betti numbers of random \v{C}ech complexes built on $\mathbb R^N$-valued binomial point processes and related Poisson point processes in the thermodynamic regime. Here we consider both the case where the underlying distribution of the point processes is absolutely continuous with respect to the Lebesgue measure on $\mathbb R^N$ and the case where it is supported on a $C^1$ compact manifold of dimension strictly less than $N$. The strong law is proved under very mild assumption which only requires that the common probability density function belongs to $L^p$ spaces, for all $1\leq p < \infty$. 
\end{abstract}

\noindent\keywords{Betti numbers, random geometric complexes, thermodynamic regime, strong law of large numbers, manifolds}

\medskip
\noindent\emph{AMS Subject Classification:} Primary 60D05, Secondary 60F15

\section{Introduction}
The emerging research area known as random topology comprises theoretical results that characterize the asymptotic behavior of topological properties of random objects \cite{BH, bob, kahle, kahle_lim, yogi, ysa}.
In addition to the mathematical value, such results also find many applications in manifold learning and topological data analysis as they provide tools for interpreting complex high dimensional data sets (see e.g.~\cite{carl, chen, ghrist}). One aspect of this area is the study of random geometric complexes and their topological properties called Betti numbers. Random geometric complexes, regarded as higher-dimensional generalizations of random geometric graphs, are generated from random points under certain deterministic rules. In this paper, we concentrate on random \v{C}ech complexes, a typical type of random geometric complexes, with the aim to establish the strong law of large numbers for their Betti numbers in the thermodynamic regime. The approach here, however, is general enough to apply to other types of geometric complexes.

For a finite set of points $\fX= \{x_1, x_2, \ldots, x_n\}$ in $\R^N$ and a radius $r>0$, the \textit{\v{C}ech complex}, denoted by $\cC(\fX, r)$, is defined to be an abstract simplicial complex consisting of all non-empty subsets $\sigma$ of $\fX$ for which $ \cap_{x \in \sigma} B(x, r) \neq \emptyset$. Here,  $B(x, r) = \{y \in\R^N\colon \|x-y\| \leq r\}$ denotes the closed ball of radius $r$ centered at $x$ with respect to the Euclidean norm $\left\|\cdot\right\|$. In other words, the \v{C}ech complex $\cC(\fX, r)$ is the nerve of the union of balls $\cup_{x \in \fX} B(x, r)$. As a consequence of the nerve lemma \cite{nerve_lemma}, the two objects are homotopy equivalent, and hence, have the same Betti numbers. Intuitively, the $k$th Betti number $\beta_k(X)$ of a topological space $X$ counts the number of $k$-dimensional `non-trivial cycles' or `holes' in it except $\beta_0(X)$, 
which counts the number of connected components of 
$X$. For instance, a $2$-dimensional sphere has $\beta_0 = \beta_2 = 1$ and $\beta_1 = 0$. Mathematically, the $k$th Betti number 
$\beta_k(X)$ is the rank of the $k$th homology group of $X$, which is defined algebraically using group theory. The knowledge of homology 
 groups is not needed to understand our results since we consider Betti numbers as the rank of homology groups with coefficients from some underlying field, which 
 can be defined easily by using elementary linear algebra. The reader interested in homology theory may refer to \cite{hatch, Munkres-1984} for a comprehensive introduction.

\begin{figure}
\centering
\begin{subfigure}[b]{0.3\textwidth}
\fbox{\includegraphics[width=\textwidth, height=3.7cm]{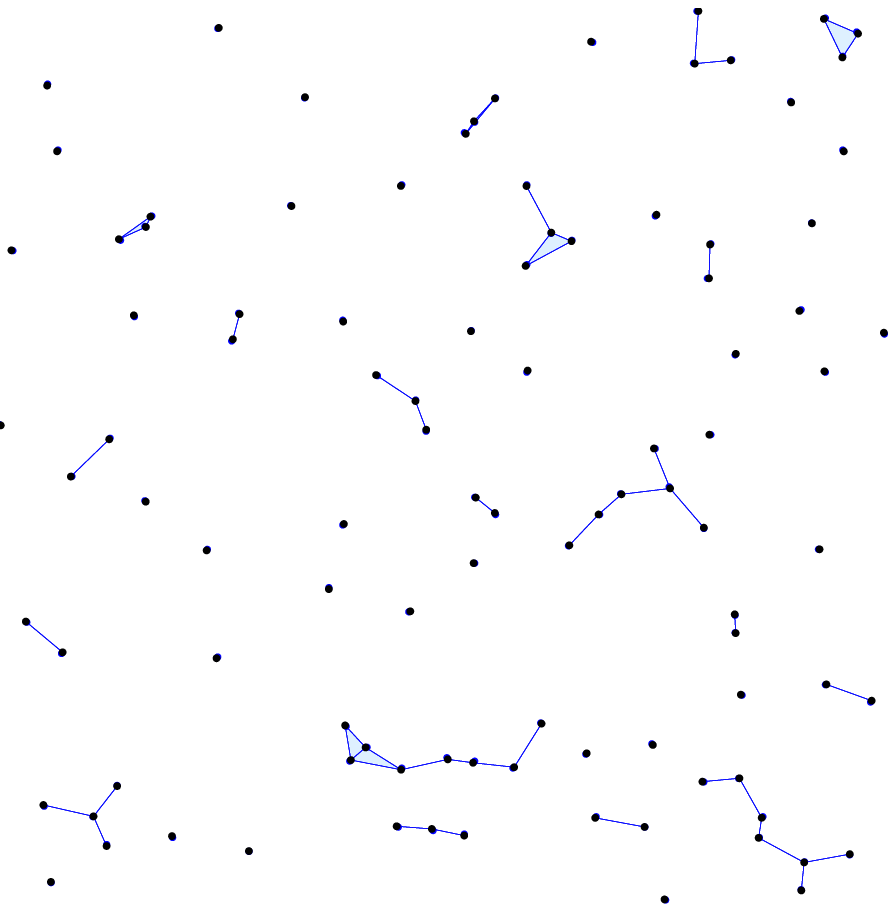}}
\caption{Sparse regime}
\end{subfigure}
\quad
\begin{subfigure}[b]{0.3\textwidth}
\fbox{\includegraphics[width=\textwidth, height=3.7cm]{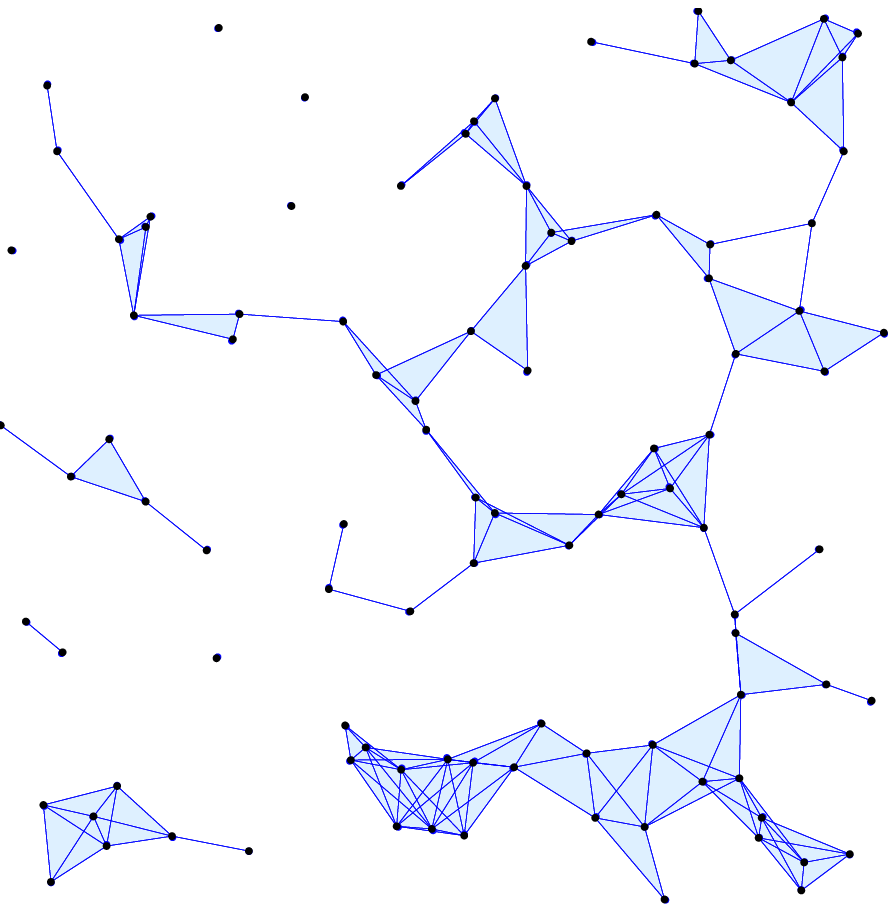}}
\caption{Thermodynamic regime}
\end{subfigure}
\quad
\begin{subfigure}[b]{0.3\textwidth}
\fbox{\includegraphics[width=\textwidth, height=3.7cm]{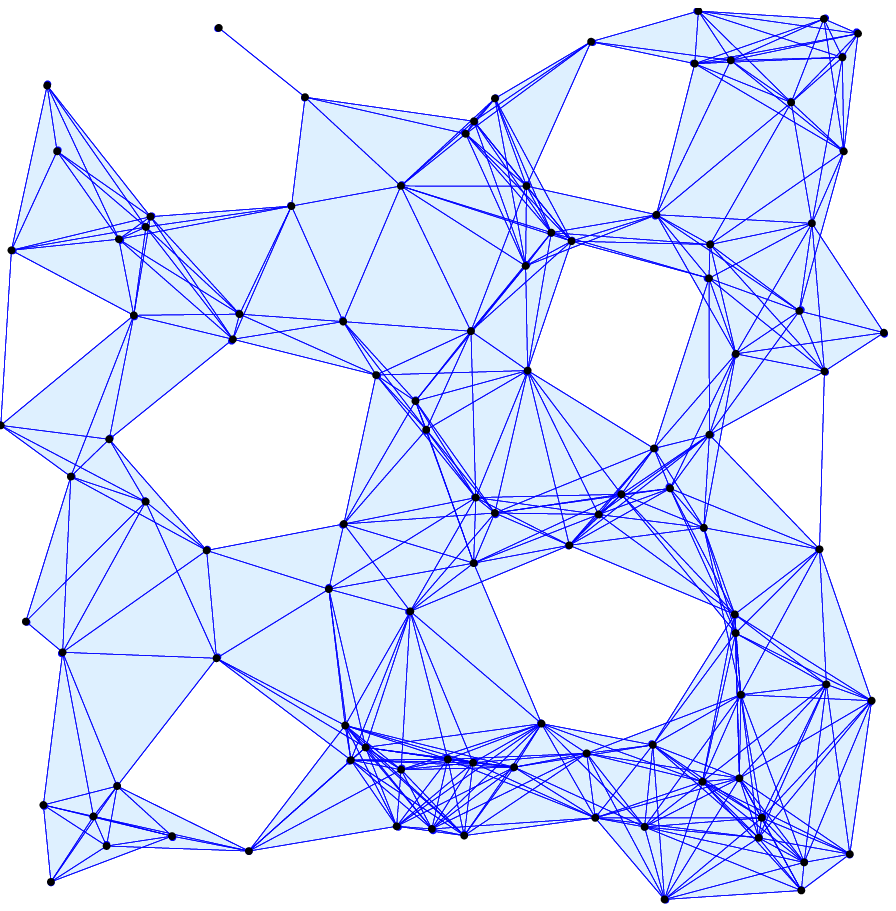}}
\caption{Dense regime}
\end{subfigure}
\caption{Illustration of different limiting regimes by constructing \v{C}ech complexes $\cC(\fX_{100}, r_{100})$ with $r_{100} = 0.03$ in (a), $0.06$ in (b) and $0.1$ in (c), where $\fX_{100}$ is a set of $100$ points drawn uniformly from $[0,1] \times [0,1] \subset \R^2$.}
\label{fig:regimes}
\end{figure}

Random \v{C}ech complexes $\cC(\fX, r)$ are built on random points $\fX$ with non-random radius $r > 0$. The source of random points may come from a stationary point process on $\R^N$, or an i.i.d.~(independent identically distributed) sequence $\{X_i\}_{i \ge 1}$ of $\R^N$-valued random variables. The paper will focus on the latter case and consider binomial point processes and Poisson point processes, which are defined to be the union of the first $n$ points $\fX_n=\{X_1, \dots, X_n\} $ and the first random $N_n$ points $\cP_n = \{X_1, \dots , X_{N_n}\}$, respectively. Here, $N_n$ has Poisson distribution with parameter $n$ and is independent of $\{X_i\}$. Let us first consider the case where the common distribution of $X_i$ has a probability density function $f(x)$ with respect to the Lebesgue measure on $\R^N$. We shall refer to this case as the Euclidean setting. It is known that there are three main regimes: sparse regime, thermodynamic regime and dense regime (see Figure~\ref{fig:regimes}) in which the limiting behavior of Betti numbers $\beta_k(\cC(\fX_n, r_n))$ and $\beta_k(\cC(\cP_n, r_n))$ is totally different. Here, $\{r_n\}$ is a non-random sequence of positive numbers tending to zero for which three regimes are divided according to the limit of $\{n^{1/N} r_n\}$: zero, finite or infinite. Note that the tools used to determine the limiting behavior in each regime are also different. As mentioned before, the paper concerns with  the thermodynamic regime, the regime where $n^{1/N}r_n \to r \in (0, \infty)$, in which basic problems such as laws of large numbers and central limit theorem have not been completely understood yet.

Let us introduce some known results before stating our main results. We begin with a result on homogeneous Poisson point processes. Denote by $\cP_L(\lambda)$ the restriction on $((-L/2)^{1/N}, (L/2)^{1/N}]^N$ of a homogeneous Poisson point process on $\R^N$ with intensity $\lambda \ge 0$. Then for $0 \leq k \leq N-1$, as $L \to \infty$ \cite[Theorem~3.5]{ysa},
\[
        \frac{\beta_k(\cC(\cP_L(\lambda), r))}{L} \to \hat \beta_k^{(N)}(\lambda, r) ~\text{\rm{a.s.}},
\]
where a.s.~stands for the almost sure convergence. Note that as a consequence of the nerve lemma, $ \beta_k(\cC(\fX, r)) \equiv 0$, if $k \geq N$, for any finite set $\fX \subset \R^N$ and any $r \geq 0$. Thus, we set $\hat\beta_k^{(N)} (\lambda, r) = 0$, if $k \geq N$. Now in the Euclidean setting, the following strong law of large numbers for $\beta_k(\cC(\fX_n, r_n))$ and $\beta_k(\cC(\cP_n, r_n))$ in the thermodynamic regime holds, i.e., as $n \to \infty$ with $n^{1/N}r_n  \to r \in (0, \infty)$, 
\[
	\frac{\beta_k(\cC(\fX_n, r_n))}{n} \left(\text{resp.~} \frac{\beta_k(\cC(\cP_n, r_n))}{n}\right)\to \int_{\R^N} \hat \beta_k^{(N)}(f(x), r) dx ~\text{\rm{a.s.}},
\]
provided that the probability density function $f(x)$ is Riemann integrable, has convex compact support and is bounded both below and above on the support \cite{Duy-2016, ysa}. Although in stochastic geometry, weak and strong laws of large numbers have been established for a general class of local functionals \cite{pen2007law, pen2003weak}, Betti numbers do not belong to that class. Thus, the study of Betti numbers needs further development. We propose here an elementary approach to show the strong law of large numbers, which can remove all the above technical conditions on $f(x)$ (see Theroem~\ref{thm:euclidb} after removing the symbol $\rho$ and taking $D(x) \equiv1$), and can apply to the problem on manifolds as well.

Now let $\cM \subset \R^N$ be a $C^1$ compact manifold of dimension $m < N$. Assume that the underlying distribution is supported on $\cM$ and has a probability density function $\kappa(z)$ with respect to the volume form $dz$ on $\cM$. It means that if $Z$ is a $\R^N$-valued random variable having density $\kappa$ then for every $A \subset \R^N$, 
\[
           \Prob(Z \in A) = \int_{A \cap \cM} \kappa(z) dz,
\]
where $dz$ is the volume form on $\cM$.
We shall refer to this case as the manifold setting. To distinguish from the Euclidean setting, we denote binomial point processes and Poisson point processes on manifold by $\cZ_n$ and $\cQ_n$, respectively.  It has been shown that results in the Euclidean setting can be naturally extended to the manifold setting. Here, the three regimes are divided according to the limit of $\{n^{1/m}r_n\}$. However, in the thermodynamic regime, the only existing result for Betti numbers is the linear growth of their expected value {\cite[Theorem~4.3]{bob}}. 
We improve this result by showing the strong law of large numbers stated below. 
In conclusion, we completely establish the strong law of large numbers in both the Euclidean setting and the manifold setting in this paper. The question on central limit theorem in the Euclidean setting is partially answered in \cite{owada2018, Trinh-2018} under a technical condition that the limiting radius $r$ is small enough. So we can say central limit theorems in both the settings are still open.
\begin{theorem}[For Manifolds]\label{thm:manifoldb}
Assume that the common probability density function 
$\kappa (z)$ is supported on an $m$-dimensional compact $C^1$ manifold $\cM \subset \R^N$ and for all $j \in \N$, $\int_{\cM} \kappa(z)^j dz < +\infty$. Then 
as $n \to \infty$ with $n^{1/m}r_n \to r \in (0, \infty) $, 
\[
 \frac{\beta_k(\cC(\cZ_n, r_n))}{n} \left(\text{resp.~}\frac{\beta_k(\cC(\cQ_n, r_n))}{n} \right) \to \int_{\cM} \hat \beta_k^{(m)} (\kappa(z), r) dz ~\text{\rm{a.s.}}
 \]
 Here, $\hat \beta_k^{(m)} (\lambda, r)$ is the limit of Betti numbers in case of homogeneous Poisson point processes on $\R^m$ (not on $\R^N$).
\end{theorem}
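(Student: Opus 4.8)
The plan is to deduce the manifold statement from the Euclidean result, Theorem~\ref{thm:euclidb}, by passing to local coordinates. The guiding observation is that the thermodynamic scaling $r_n \sim r\,n^{-1/m}$ forces the active length scale to shrink, so that over any sufficiently small coordinate patch the $C^1$ manifold $\cM$ is indistinguishable from its tangent plane $\R^m$ up to a vanishing error. This is precisely the mechanism that makes the intrinsic dimension $m$, and hence $\hat\beta_k^{(m)}$ rather than $\hat\beta_k^{(N)}$, appear in the limit.

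First I would reduce the binomial and Poisson statements to a single one. In the thermodynamic regime the Poisson count satisfies $N_n/n \to 1$ almost surely, and a coupling between $\cZ_n$ and $\cQ_n$ shows that the two normalized Betti numbers differ by $o(n)$ almost surely; this transfer is routine once one knows that adding or deleting a point changes $\beta_k$ by at most a constant times the number of simplices incident to it, a quantity having finite moments of every order under the hypothesis $\int_\cM \kappa^j\,dz < \infty$. It therefore suffices to treat, say, $\cZ_n$. Next I would localize: since $\cM$ is compact and $C^1$, cover it by finitely many charts $\varphi_i$ and subordinate to them a partition of $\cM$ into small pieces, each inside a single chart. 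Pushing $\kappa$ forward through $\varphi_i$ turns the volume form into Lebesgue measure times the Gram determinant of $D\varphi_i^{-1}$, and this Jacobian is exactly the weight $D(x)$ of Theorem~\ref{thm:euclidb}; the condition $\int_\cM \kappa^j\,dz < \infty$ recast in coordinates is precisely the requirement that the pushed-forward density lie in every $L^p$. Applying Theorem~\ref{thm:euclidb} on each patch then yields the local contribution $\int_{\cM_i} \hat\beta_k^{(m)}(\kappa(z), r)\,dz$.

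The main obstacle is that Betti numbers are not additive over a partition, so the global $\beta_k(\cC(\cZ_n, r_n))$ is only approximately the sum of the patchwise quantities. To control the defect I would exploit the locality of the \v{C}ech complex at scale $r_n$: a simplex straddling two pieces uses only points within $r_n$ of an interface, and a Mayer--Vietoris type bound shows the error in $\beta_k$ is at most a constant times the number of points in the $r_n$-neighborhood of the union of the interfaces. Since that neighborhood has volume $O(r_n)$ per unit boundary and $r_n \to 0$, its expected point count is $o(n)$ for a fixed partition, and combined with the moment bounds this forces the defect to be $o(n)$ almost surely. A secondary point, handled by the same $r_n \to 0$ estimate, is that the chart-induced distortion of distances (tangent plane versus manifold) perturbs the \v{C}ech complex only on an asymptotically negligible family of simplices, so the limit is governed by the flat model $\hat\beta_k^{(m)}$. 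The interface estimate, rather than any single computation, is where I expect the real work to lie.
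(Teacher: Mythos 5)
Your overall architecture coincides with the paper's: reduce to the Poisson process, partition $\cM$ into finitely many pieces each contained in a single chart, pull everything back so that the Gram determinant of the chart becomes the weight $D(x)$, and apply the general-metric Theorem~\ref{thm:euclidb} patchwise; your remark that $\int_\cM \kappa^j\,dz<\infty$ transfers to the $L^p$ hypotheses because the Jacobian factor is bounded on compact pieces is also exactly the paper's. One side comment: your ``secondary point'' about tangent-plane distortion is redundant in this setup. Theorem~\ref{thm:euclidb} is proved for an arbitrary metric satisfying (P1)--(P2), and the pulled-back metric $\rho_i(x,y)=\|\phi_i(x)-\phi_i(y)\|$ satisfies these properties (Section~\ref{motivation}), so the \v{C}ech complex on a patch is treated exactly, not approximately; no separate negligibility argument is needed.

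The genuine gap is the interface estimate, which you yourself flag as where the real work lies. The bound you assert --- that the defect in $\beta_k$ is at most a constant times the \emph{number of points} in the $r_n$-neighborhood of the interfaces --- is false as a deterministic statement. The correct deterministic bound (Lemma~\ref{lem:Betti-estimate}, applicable because the disjoint union of the patch complexes is a subcomplex of the full complex) is by the number of straddling $k$- and $(k+1)$-\emph{simplices}, and $P$ points near an interface can span on the order of $P^{k+2}$ such simplices; no Mayer--Vietoris argument reduces this to $O(P)$. Consequently, to finish along your route you would have to bound the expected number of straddling simplices via Palm calculus (using $\kappa\in L^{j+1}$) and then prove almost sure concentration via fourth-moment estimates --- in effect redoing the paper's simplex-count machinery localized to the interface region. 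The paper sidesteps the interface geometry entirely with a squeeze argument: it establishes the strong law for the \emph{global} simplex counts on the manifold, $S_j(\cQ_n,r_n)/n \to A_j^{(m)}(r)\int_\cM \kappa(z)^{j+1}dz$ a.s.\ (Lemma~\ref{simp_count}, Corollary~\ref{mani_simp} and the display~\eqref{almost_mani}), observes that the patchwise simplex counts converge a.s.\ (through the charts, by Proposition~\ref{thm: 4-order}) to limits that sum to exactly the same integral, and then Lemma~\ref{lem:Betti-estimate} forces the Betti-number defect to be $o(n)$ a.s.\ automatically. Substituting that squeeze for your interface estimate repairs the proof; as written, the interface step does not go through.
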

Note that it may be possible that for some $m \leq k < N$, $\beta_k(\cZ_n, r_n) > 0$, but  
$\hat\beta_k^{(m)} (\lambda, r) \equiv 0$ for all $k \geq m$. Note also that the study of the zeroth Betti number, which coincides with the number of connected components in a geometric graph, has a rich literature. A formula for $\hat \beta_0^{(m)}(\lambda, r)$ can be found in \cite[Theorem~13.25]{Penrose-book}. In some sense, the case $m = 2$ is completely understood because $\hat \beta_1^{(2)}(\lambda, r)$ can be deduced from the limiting behavior of the Euler characteristic \cite{bob}. These are all the cases where the explicit formula for $\beta_k^{(m)}(\lambda, r)$ has been known. Now for general $m$, and for $0 \le k \le m-1$, we gather here some known properties of $\hat \beta_k^{(m)}(\lambda, r)$.
\begin{enumerate}[(i)]
\item \textit{Scaling property:} For any $\theta > 0$,  $\hat \beta_k^{(m)}(\lambda, r) = \frac{1}{\theta} \hat \beta_k^{(m)}\left(\lambda \theta, \frac{r}{\theta^{1/m}}\right)$. 
 
\item \textit{Continuity and positivity:} $\hat \beta_k^{(m)}(\lambda, r)$ is a continuous function in both $\lambda$ and $r$, which is positive if $\lambda, r > 0$.

\item \textit{Exponential decay:} Let $\omega_m$ be the volume of a unit ball in $\R^m$ and $r \in (0, \infty)$. Then
\[
\hat \beta_k^{(m)}(1, r) \leq c (\omega_mr)^{mk} e^{- (\omega_mr)^m}, 
\]
where $c$ is a positive constant depending only on $m$ and $k$. 
\end{enumerate}
In addition, approximation formulae for $\hat \beta_k^{(m)}(\lambda, r)$, for $m = 2, 3$, were also studied \cite{Robins-2006}.
\begin{figure}
\includegraphics[width=.49\textwidth]{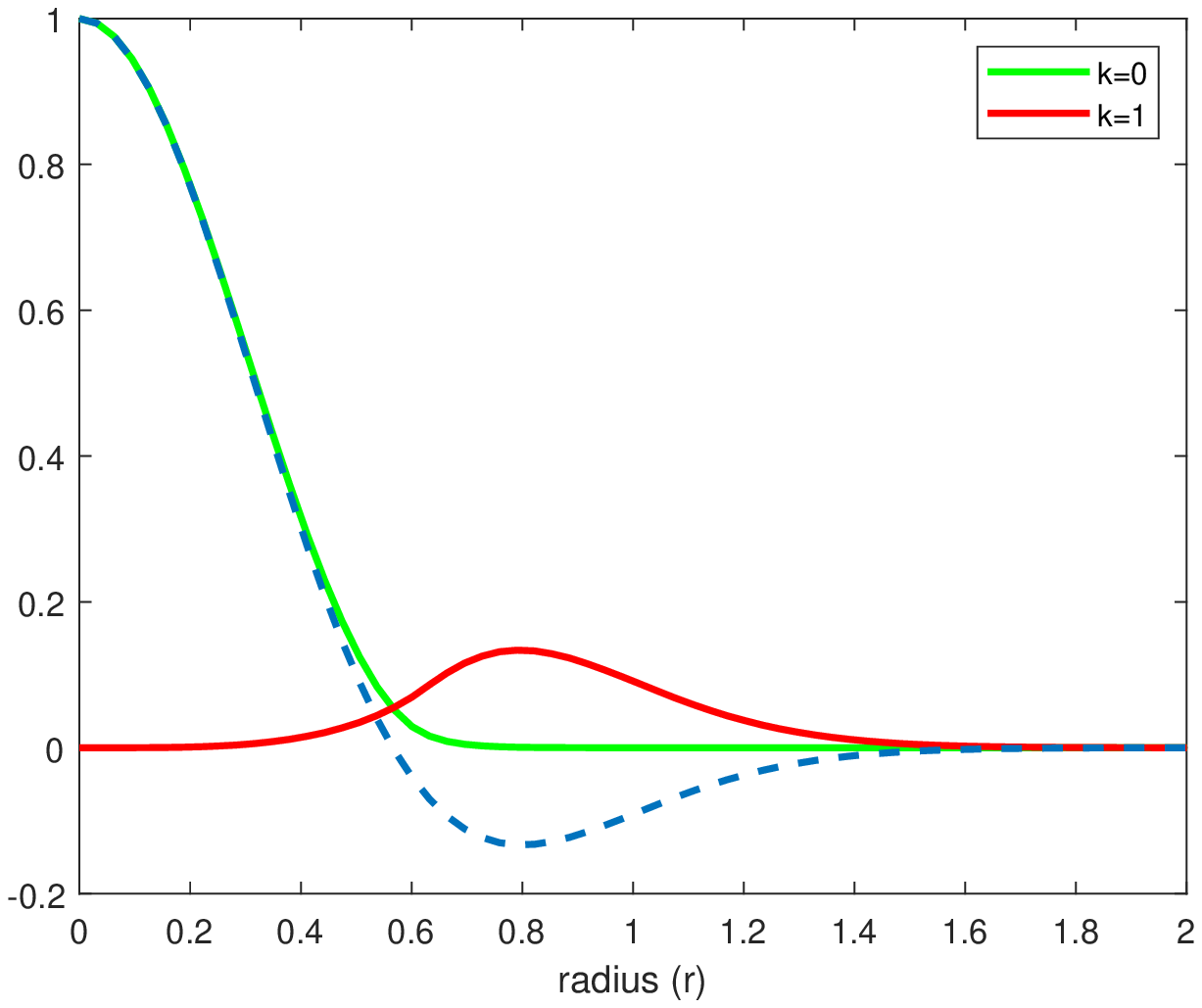}
\includegraphics[width=.49\textwidth]{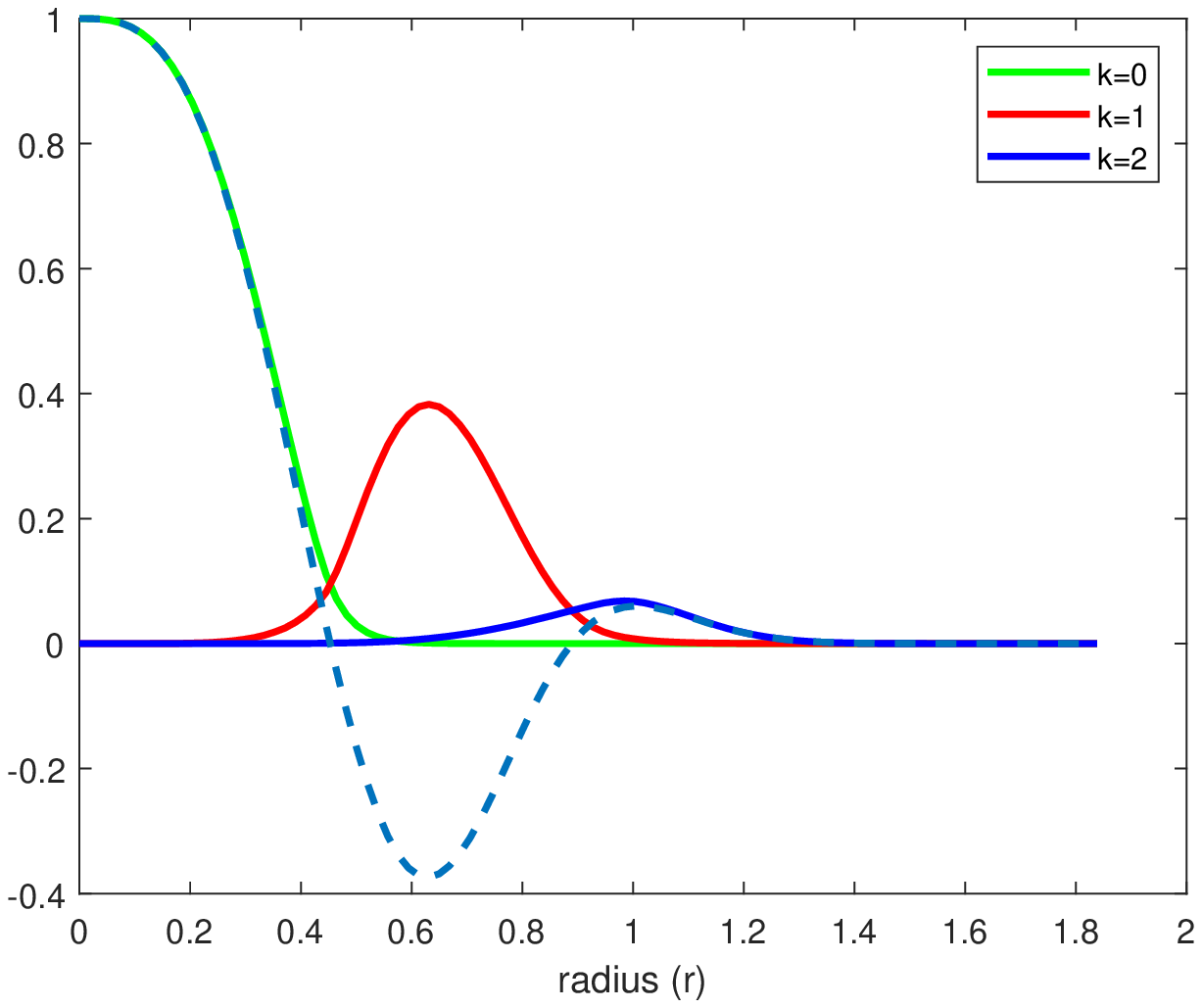}
\caption{Illustration of the limiting Betti numbers $\hat \beta_k^{(N)}(1, r)$ with $N = 2$ (left) and $N = 3$ (right). Plots are numerical values for $n^{-1}\Ex[\beta_k(\cC(n^{1/N}\fX_{n}, r))]$ calculated by taking the average over $20$ times of sampling $n = 10^5$ points uniformly on 
$[0,1]^N$. The dashed lines are for the Euler characteristics, which seem to fit well with the exact value $(1-\pi r^2)e^{-\pi r^2}$ and $\left(\frac{\pi^4 r^6}{6}-4\pi r^3+1\right)e^{\frac{-4\pi r^3}{3}}$ in $2$ and $3$ dimensions, respectively.}
\end{figure}
The scaling, continuity and positivity properties were proved in \cite{Duy-2016,yogi} while the exponential decay property follows easily from the proof of  
\cite[Proposition~6.1]{bobrowski2017random}.

Let us now explain a key idea to deal with the manifold setting. Let $\cM$ be a $C^1$ manifold of dimension $m<N$. Assume for instance that the support of $\kappa$ lies entirely in a single chart $(V, \phi)$ (see Definition~\ref{def_manifolds}), i.e., $\supp(\kappa) \subset \phi(V)$. Then $\{X_i = \phi^{-1}(Z_i)\}_{i \ge 1}$ becomes an i.i.d.~sequence of random variables on $V \subset \R^m$. Moreover, if we define the metric $\rho$ on $V$ by $\rho(x,y) = \|\phi(x) - \phi(y)\|$, then it is clear that $ \cC(\cZ_n, r)$ is identical with $\cC(\fX_n, r, \rho)$, the \v{C}ech complex of radius $r$, constructed on $\fX_n \subset \R^m$ using $\rho$. Thus, the problem on a manifold is converted to that on the Euclidean setting with a general metric $\rho$, which is easier for us to handle. (A general result is stated in Theorem~\ref{thm:euclidb}.)
In general, the support of $\kappa$ may not be covered by a single chart but finitely many ones because of the compactness of $\cM$. As we shall see later, the desired limit theorem in the manifold setting can be derived by taking a suitable partition of the manifold, together with  the spatial independence property of Poisson point processes and the finite additivity of Betti numbers.

From the applications point of view, considering only homology is not enough. 
 It is important to see how persistent the `holes' are, which constitutes the theory of persistent homology.
A good introduction of this topic can be found in  \cite{edel}.
Although we do not discuss about the persistent homology in this article, our results for Betti numbers can be easily extended 
to persistent Betti numbers, and hence to persistence diagrams due to \cite{dhs} (see Remark~\ref{remark_persistent homology}).

The paper is organized as follows. 
Since the main result in the manifold setting is already mentioned here, we state only the main result in the Euclidean setting in the next section. 
In Sections~\ref{section_for_simplex} and~\ref{betti_numbers}, we derive the strong law of large numbers for simplex 
counts and Betti numbers in the Euclidean setting respectively. What we mean by simplex counts and why the strong law for them is needed, will be clear in the 
next section. Finally, we conclude this article by giving the proof of Theorem~\ref{thm:manifoldb} in Section~\ref{simplex-manifold}.

\section{Main Results}\label{main_results}
Let us introduce the definition of Poisson point processes.
Let $\lambda(x) \ge 0$ be a locally integrable function in $\R^N$. 
A point process $\cP$ on $\R^N$ is said to be a Poisson point process with intensity function $\lambda(x)$, denoted by $\cP(\lambda(x))$, if it satisfies the following two conditions
\begin{enumerate}[(i)]
	\item for any bounded Borel set $A$, the random variable $\cP(A)$ counting the number of points in $A$ has Poisson distribution with parameter $\Lambda(A) = \int_A \lambda(x)dx$, i.e., 
	\[
		\Prob(\cP(A) = k) = e^{-\Lambda(A)} \frac{\Lambda(A)^k}{k!}, \quad k = 0,1,\dots;
	\]
 
	\item for disjoint bounded Borel sets $A_1, A_2, \ldots, A_n$, the random variables $\cP(A_1)$, $\cP(A_2), \ldots, \cP(A_n)$ are independent.
\end{enumerate}
In case $\lambda(x) \equiv \lambda$, $\cP$ is called a homogeneous Poisson point process with intensity $\lambda$. Poisson point processes on manifolds can be defined in a similar way.
For more on Poisson point processes, one may see \cite{Poisson}.

In the Euclidean setting, we consider \v{C}ech complexes that are constructed using a general metric $\rho$. (The reason was mentioned as the key idea to deal with the manifold setting in the introduction.) Let $\cA$ be a Borel subset of $\R^N$ equipped with a metric $\rho$. Let $B_{\rho}(x, r) = \{y \in \cA\colon \rho(x,y) \leq r\}$ denote the ball centered at $x$ of radius $r$ with respect to the metric $\rho$. Then for a finite set of points $\fX= \{x_1, x_2, \ldots, x_n\} \subset \cA$ and a radius $r>0$, the \v{C}ech complex constructed using $\rho$, denoted by $\cC(\fX, r, \rho)$, is the collection of all non-empty subsets $\sigma$ of $\fX$ for which $ \cap_{x \in \sigma} B_{\rho}(x, r) \neq \emptyset$. Our main result, here, establishes the limiting behavior of $\beta_k(\cC(\fX_n, r_n, \rho))$,
where the common probability density function $f(x) \in L^p(\R^N)$ for all $1\leq p < \infty$ and  is supported on $\cA$. We need some conditions on the metric $\rho$ and require that $Leb^N(\cA) = 0$. Here and from now on, for any subset $A \subset \R^N$, $\partial A$, $Leb^N(A)$, $A^\circ$, $\bar{A}$ and $|A|$ denote its boundary, $N$-dimensional Lebesgue measure, interior, closure and cardinality, respectively.

The metric $\rho$ is required to be locally approximated by a translation invariant metric induced from a weighted $N$-dimensional Euclidean norm, and to be locally comparable to the Euclidean norm. It is also assumed to be continuous to ensure the measurability of functionals defined on $\cC(\fX, r, \rho)$. 
Such properties can be easily checked when $\rho$ is induced from the manifold setting, as we shall do at the end of this section. More specifically, the two properties of the metric $\rho$ are as follows.
\begin{itemize}
\item[(P1)] 
For fixed $ x \in \cA$, let $d_x(y,z) = \|\B_x(y - z)\|$, where $\B_x$ is a positive definite $N\times N$ matrix and the map $x \mapsto \B_x$ is assumed to be measurable.
 Then we assume that for given $\varepsilon >0 $, there is a $\delta=\delta_{x,\varepsilon} > 0$ such that  for $y, z \in \cA$, whenever $y, z \in B(x, \delta) = \{u : \|u - x\| \le \delta\}$,
\[  
   (1- \varepsilon) d_{x} (y,z) \leq \rho(y,z) \leq (1+ \varepsilon) d_{x} (y,z).
\]
\item[(P2)] There exist constants $\delta, c$ and $C >0$ such that for  $y, z \in \cA$, whenever $\|y-z\| \leq \delta$,
\[ 
       c\|y-z\| \leq \rho(y,z) \leq C\|y-z\|.
\]
\end{itemize}
It is clear that the properties (P1) and (P2) together imply that for $x \in \cA^\circ$,
\begin{equation}\label{prop_metric_d}
      c\|y-z\| \leq d_x(y,z) \leq  C\|y-z\|, \quad \text{for all } y, z \in \R^N.
\end{equation}
Indeed, let $\delta, c$ and $C$ be the constants in (P2). Given $x \in \cA^\circ$ and any $\varepsilon \in (0,1)$, take a constant $\delta_{x, \varepsilon} < \delta$ in (P1) for which $B(x, \delta_{x, \varepsilon}) \subset \cA$ also holds. Then for any $y, z \in B(x, \delta_{x, \varepsilon})$, it follows from (P1) and (P2) that 
\[
     \frac{ c}{1+ \varepsilon}\|y-z\| \leq d_{x} (y,z) \leq \frac{C}{1- \varepsilon}\|y-z\|.
\]
Since both the Euclidean norm and the metric $d_x$ are translation invariant and homogeneous, i.e., $d_x(u + \alpha y, u + \alpha z) = |\alpha| d_x(y, z)$, for $u, y, z \in \R^N$ and $\alpha \in \R$, the above inequality holds for any $y, z \in \R^N$. Since $\varepsilon$ is arbitrary, by letting $\varepsilon \to 0$, we arrive at the desired inequality.

In addition, when $\cA$ is compact, (P1) implies (P2). To see this, for $x \in \cA$, let $\delta_x = \delta_{x, 1/2}> 0$ be the constant in (P1) (with $\varepsilon = 1/2$). Then $\cA$ can be covered by finitely many balls $\{B(x_i, \delta_{x_i}/2)\}_{i = 1}^n$ because of the compactness of $\cA$. For each $x_i$, since $\B_{x_i}$ is a positive definite matrix, it is clear that
\begin{equation}\label{min-max}
	 \lambda_{\min}(\B_{x_i}) \|y-z\|	\le d_{x_i}(y,z) 	\le \lambda_{\max}(\B_{x_i}) \|y-z\|.
\end{equation}
Here, $\lambda_{\min}(\A)$ and $\lambda_{\max}(\A)$ denote the minimum and the maximum eigenvalues of a matrix $\A$ respectively. Let 
\[
\delta =  \min_{i = 1,\dots,n}{\frac{\delta_{x_i}}{2}}, \quad c =\frac12 \min_{i = 1,\dots, n}{\lambda_{\min}(\B_{x_i})}, \quad C =\frac32 \max_{i = 1,\dots, n}{\lambda_{\max}(\B_{x_i})}.
\]
Given $y, z \in \cA$ with $\|y - z\|\le \delta$, it follows that $y, z$ must belong to $B(x_i, \delta_{x_i})$ for some $i \in \{1, \dots, n\}$. Then the property (P1) implies that
\begin{equation}\label{P1}
	\frac{1}{2} d_{x_i}(y,z) \le \rho(y, z) \le \frac{3}{2} d_{x_i}(y, z).
\end{equation}
Therefore, the property (P2) holds for $c, C$ and $\delta$ defined above by combining the two estimates \eqref{min-max} and \eqref{P1}.

For simplicity, when any function is defined on a \v{C}ech complex, the symbol $\cC$ is removed except at few places in the article. For example, we write $\beta_k(\fX_n, r, \rho)$ for $\beta_k(\cC(\fX_n, r, \rho))$.
Our main result in the Euclidean setting is as follows. 
\begin{theorem}[For Euclidean spaces]\label{thm:euclidb}
 Let $(\cA, \rho)$ be a metric space, where $\cA$ is a Borel subset of $\R^N$ with $Leb^N(\partial \cA) = 0$ and the metric $\rho$ satisfies the properties \emph{(P1)} and \emph{(P2)}. Assume that the common probability density function
 $f(x)$ is supported on $\cA$ and for all $j \in \N$, $ \int_{\R^N} f(x)^j dx < +\infty$. Then as $n \to \infty$ with $n^{1/N}r_n \to r \in (0, \infty) $, 
\[
        \frac{\beta_k(\fX_n, r_n, \rho)}{n}  \left(\text{resp.~}\frac{\beta_k(\cP_n, r_n, \rho)}{n}\right) \to \int_{\R^N} \hat {\beta}_k^{(N)} \left( \frac{f(x)}{D(x)}, r \right) D(x) dx ~ ~\text{\rm{a.s.}},
\]
where $D(x) = \det(\B_x)$ with $\B_x$ being the positive definite matrix in the property {\rm(P1)}.
\end{theorem}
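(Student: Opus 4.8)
For $k\ge N$ the nerve lemma forces $\beta_k\equiv0$ and $\hat\beta_k^{(N)}\equiv0$, so assume $0\le k\le N-1$. The plan is to localize to small spatial cells, on each of which the process looks homogeneous Poisson and the metric $\rho$ looks like the translation-invariant $d_x(y,z)=\norm{\B_x(y-z)}$, and then to invoke the homogeneous-Poisson limit of \cite[Theorem~3.5]{ysa}. The factors $D(x)=\det\B_x$ and $f(x)/D(x)$ both arise through one affine rescaling: near a reference point $x$ apply $z\mapsto n^{1/N}\B_x z$. This map sends $d_x$ to the Euclidean metric and the thermodynamic radius $r_n$ to $n^{1/N}r_n\to r$; a cell of Lebesgue volume $v$ becomes a Euclidean box of volume $L=n\,D(x)\,v\to\infty$; and a process of intensity $nf(x)$ becomes one of constant intensity $f(x)/D(x)$. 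Hence \cite[Theorem~3.5]{ysa}, with the continuity property (ii) absorbing $n^{1/N}r_n\to r$, gives a per-cell Betti number $\approx L\,\hat\beta_k^{(N)}(f(x)/D(x),r)=n\,D(x)\,v\,\hat\beta_k^{(N)}(f(x)/D(x),r)$; dividing by $n$ and summing over cells yields the integrand $D(x)\,\hat\beta_k^{(N)}(f(x)/D(x),r)$.

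The two elementary tools are the strong law for simplex counts (Section~\ref{section_for_simplex}) and the \emph{add/delete-one-point stability} of Betti numbers: since attaching a single simplex changes exactly one Betti number by one, $|\beta_k(\fX\cup\{p\})-\beta_k(\fX)|$ is at most the number of $k$- and $(k+1)$-simplices incident to $p$, a quantity whose moments are finite precisely because $\int f^j<+\infty$ for all $j$. I would first treat the Poisson process $\cP_n$. Partition $\R^N$ into cubes of a fixed small side $a$ and delete every point lying within an $O(r_n)$-neighborhood of a cube face, the constant chosen via (P2) so that $\rho$-balls of radius $r_n$ cannot join points in distinct cube cores. The resulting complex is the disjoint union of its restrictions to the cores, over which Betti numbers are additive; by the stability bound the total cost of the deletions is dominated by the simplex count in the interface strips, of expected order $O(n^{1-1/N}/a)$, hence negligible after dividing by $n$ and letting $n\to\infty$ with $a$ fixed. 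Because $Leb^N(\partial\cA)=0$ and (P1)--(P2) give the uniform comparison \eqref{prop_metric_d} on $\cA^\circ$, only cells well inside $\cA$ contribute in the limit, and on each such core the rescaling above together with \cite[Theorem~3.5]{ysa} delivers the per-cell limit. Letting $a\to0$ afterwards promotes the resulting Riemann-type sum to the integral, the continuity (ii) and exponential decay (iii) of $\hat\beta_k^{(N)}$ justifying passage to the limit under the integral sign.

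For the almost-sure statement I would combine convergence of expectations with a variance bound. For the Poisson process the decoupled sum $\sum_i\beta_k(\mathrm{core}_i)$ is a sum of \emph{independent} random variables (by spatial independence over disjoint cores), each with finite variance controlled by a local simplex count, so a classical strong law for independent summands applies; in general $\Var(\beta_k)=O(n)$ follows from the add/delete bound via the Poincar\'e inequality (Poisson case) and the Efron--Stein inequality (binomial case). A variance of this order gives almost-sure convergence along a sparse subsequence by Borel--Cantelli, and the stability under adding $o(n)$ points fills the gaps; de-Poissonization---using that $N_n$ deviates from $n$ by $O(\sqrt n)$ and that $O(\sqrt n)$ extra points cost only $o(n)$ in Betti numbers---transfers the statement between $\cP_n$ and $\fX_n$. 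A preliminary truncation to the part of $f$ on a large ball, controlled by $\int f^j<+\infty$ and (iii), reduces a possibly unbounded support to a bounded one.

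The main obstacle is twofold, and both parts reflect the global nature of homology. First, Betti numbers are neither additive nor monotone in the configuration, so the spatial decomposition is only approximate; it is the interface-deletion estimate, rather than any exact additivity, that makes the blocking work, and keeping its cost $o(n)$ is the crux. Second---and more delicate---the density $f$ is merely $L^p$ and need not be continuous, so it is not literally constant on small cells and one cannot sandwich $\beta_k$ between homogeneous processes (no monotonicity). I would circumvent this by first proving the theorem for densities constant on the cells of the partition, where the restricted process is genuinely homogeneous Poisson and \cite[Theorem~3.5]{ysa} applies verbatim, and then removing the restriction by approximating $f$ in $L^1$ by such densities: coupling the two Poisson processes so that their symmetric difference is Poisson of intensity $n|f-\tilde f|$ and expected size $n\norm{f-\tilde f}_1$, the stability bound shows the two Betti numbers differ by $o(n)$, while continuity (ii) and decay (iii) guarantee that the target integral $\int D\,\hat\beta_k^{(N)}(f/D,r)$ varies continuously under the same $L^1$ approximation.
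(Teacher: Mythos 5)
Your overall architecture is sound and is genuinely different from the paper's: you use cells of fixed macroscopic side $a$, delete $O(r_n)$-wide interface strips so that the complex decomposes exactly over the cores, take the per-cell thermodynamic limit, and only then let $a\to 0$; the paper instead uses cubes of side proportional to $r_n$ (fixed volume $L$ after rescaling), compares the union of restricted complexes with the full one through Lemma~\ref{lem:Betti-estimate} and the simplex-count law, proves the per-cell statements only in expectation (via a Poisson coupling, uniform convergence of the rescaled metric to $d_{x_0}$, and Lebesgue differentiation), obtains almost-sure convergence from independence of the cells and fourth moments, and finally sends $L\to\infty$. Your interface estimate, truncation via an $L^1$-coupling, de-Poissonization, and systematic use of the simplex-count law as the control device all have close counterparts in the paper and are workable.

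The genuine gap is in the step you rely on most: ``on each such core the rescaling above together with \cite[Theorem~3.5]{ysa} delivers the per-cell limit.'' On a cell of \emph{fixed} side $a$, the complex is built with $\rho$, and property (P1) gives only the multiplicative comparison $(1-\varepsilon)\,d_x\le\rho\le(1+\varepsilon)\,d_x$ with an $\varepsilon=\varepsilon(x,a)$ that does \emph{not} shrink as $n\to\infty$ (it shrinks only as $a\to 0$, and then non-uniformly in $x$, since $\delta_{x,\varepsilon}$ in (P1) depends on $x$). The affine map $z\mapsto n^{1/N}\B_x z$ turns $d_x$ into the Euclidean metric, but it does not turn $\rho$ into it; so even after your reduction to piecewise-constant densities, the process on a rescaled core is homogeneous Poisson but the complex is \emph{not} a Euclidean \v{C}ech complex, and \cite[Theorem~3.5]{ysa} does not apply verbatim. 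All that (P1) yields is that the $\rho$-complex at radius $r_n$ is sandwiched between Euclidean complexes at radii $r_n/(1+\varepsilon)$ and $r_n/(1-\varepsilon)$, and---exactly the non-monotonicity obstruction you yourself raise for the density---such a sandwich does not by itself control Betti numbers; for fixed $a$ the per-cell almost-sure limit need not even exist. The repair is available with your own tools: bound the per-cell $\limsup$ and $\liminf$ via Lemma~\ref{lem:Betti-estimate} by the Euclidean Betti number plus the simplex-count discrepancy between radii $r(1\pm\varepsilon)$, whose normalized limit $\sum_{j=k}^{k+1}\bigl(\hat S_j^{(N)}(\lambda, r(1+\varepsilon))-\hat S_j^{(N)}(\lambda, r(1-\varepsilon))\bigr)$ vanishes only in the subsequent limit $a\to 0$, and only for a.e.~$x$; alternatively, adopt the paper's device of cells of side proportional to $r_n$, on which the rescaled metric converges \emph{uniformly} to $d_{x_0}$ and the two complexes almost surely coincide for large $n$ (Lemmas~\ref{continuity_simp} and~\ref{lem:2_estimate}). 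The same caveat applies to your remark that continuity of $\hat\beta_k^{(N)}$ ``absorbs'' $n^{1/N}r_n\to r$: without monotonicity this too must go through a simplex-count sandwich, not through continuity of the limiting constant alone.
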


Other integer valued random variables defined on  \v{C}ech complexes that will play a vital role in establishing our results, are simplex counts
$S_j(\cdot)$. Here for a simplicial complex $\cK$,  $S_j(\cK)$ is the number of $j$-simplices in $\cK$, or the number of elements of cardinality $(j + 1)$ in $\cK$.
There are existing results in stochastic geometry which may be applicable to simplex counts but not to Betti numbers. 
However, we can estimate Betti numbers by simplex counts due to the following lemma.
\begin{lemma} \label{lem:Betti-estimate}	
 Let $\cK,\tilde \cK$ be any two finite simplicial complexes such that $\tilde \cK \subset \cK$. Then for every $k \ge 0$,
	\begin{equation*}\label{Betti-estimate}
		\left| \beta_k(\cK) - \beta_k (\tilde\cK) \right| \le \sum_{j = k}^{k + 1} \big(S_j(\cK) - S_j(\tilde\cK)\big).
	\end{equation*}
\end{lemma}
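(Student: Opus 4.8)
The plan is to reduce everything to elementary linear algebra on the simplicial chain complex, exploiting that Betti numbers are computed with coefficients in a fixed field $\K$, so that each homology group is a $\K$-vector space and $\beta_k = \dim_\K H_k$. Writing $C_k(\cK)$ for the space of $k$-chains and $\partial_k^{\cK}\colon C_k(\cK) \to C_{k-1}(\cK)$ for the simplicial boundary operator of $\cK$, the rank--nullity theorem gives the standard identity
\[
  \beta_k(\cK) = S_k(\cK) - \rank \partial_k^{\cK} - \rank \partial_{k+1}^{\cK},
\]
since $\dim C_k(\cK) = S_k(\cK)$, $\dim \ker \partial_k^{\cK} = S_k(\cK) - \rank \partial_k^{\cK}$, and $\dim \Image \partial_{k+1}^{\cK} = \rank \partial_{k+1}^{\cK}$. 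The same identity holds verbatim for $\tilde\cK$.

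First I would record the effect of the inclusion $\tilde\cK \subset \cK$ at the level of chains. Since every simplex of $\tilde\cK$ is a simplex of $\cK$, the space $C_k(\tilde\cK)$ is a subspace of $C_k(\cK)$ (spanned by the $k$-simplices lying in $\tilde\cK$), and $\partial_k^{\tilde\cK}$ is exactly the restriction of $\partial_k^{\cK}$ to this subspace. The key estimate is then the two-sided bound
\[
  0 \le \rank \partial_k^{\cK} - \rank \partial_k^{\tilde\cK} \le S_k(\cK) - S_k(\tilde\cK) \qquad (k \ge 0).
\]
The left inequality is immediate because the rank of a linear map restricted to a subspace cannot exceed the rank of the full map. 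For the right inequality I would note that $\Image \partial_k^{\cK}$ is the sum of $\Image \partial_k^{\tilde\cK}$ and the span of the vectors $\partial_k^{\cK}\sigma$ ranging over the $S_k(\cK) - S_k(\tilde\cK)$ many $k$-simplices $\sigma$ of $\cK$ not already in $\tilde\cK$; adjoining that many generators can raise the dimension of the image by at most that many.

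To finish, set $a_j := S_j(\cK) - S_j(\tilde\cK) \ge 0$ and $\rho_j := \rank \partial_j^{\cK} - \rank \partial_j^{\tilde\cK}$, so that the displayed bound reads $0 \le \rho_j \le a_j$. Subtracting the two rank--nullity identities gives
\[
  \beta_k(\cK) - \beta_k(\tilde\cK) = a_k - \rho_k - \rho_{k+1}.
\]
Using $\rho_k, \rho_{k+1} \ge 0$ yields the upper bound $\beta_k(\cK) - \beta_k(\tilde\cK) \le a_k$, while using $\rho_k \le a_k$ and $\rho_{k+1} \le a_{k+1}$ yields the lower bound $\beta_k(\cK) - \beta_k(\tilde\cK) \ge -a_{k+1}$. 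Since $a_k$ and $a_{k+1}$ are nonnegative, both one-sided bounds are dominated by $a_k + a_{k+1}$, which is precisely $\sum_{j=k}^{k+1}\bigl(S_j(\cK) - S_j(\tilde\cK)\bigr)$, giving the claimed inequality.

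The argument is entirely elementary, and the only real content is the rank comparison in the second step; the mild subtlety to get right is tracking the signs so that the two one-sided estimates combine into the symmetric bound. An alternative route, which I would keep in reserve, is to build $\cK$ from $\tilde\cK$ by adjoining the missing simplices one at a time in nondecreasing dimension (so that all faces are present at each stage) and to invoke the well-known fact that adjoining a single $j$-simplex either increases $\beta_j$ by one or decreases $\beta_{j-1}$ by one; summing these unit changes over the $a_j$ simplices of each dimension reproduces the same bound. I would present the direct linear-algebra version as the main proof, since it avoids the bookkeeping of the incremental construction.
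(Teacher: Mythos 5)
Your proof is correct: the rank--nullity identity $\beta_k = S_k - \rank\partial_k - \rank\partial_{k+1}$, the observation that $\partial_j^{\tilde\cK}$ is the restriction of $\partial_j^{\cK}$ to the subspace $C_j(\tilde\cK)$, and the two-sided bound $0 \le \rank\partial_j^{\cK} - \rank\partial_j^{\tilde\cK} \le S_j(\cK)-S_j(\tilde\cK)$ combine exactly as you say, and the argument covers $k=0$ without modification under the standard convention $\partial_0 = 0$ (so $\rho_0=0$). Note, however, that the paper itself contains no proof of this lemma: it defers to \cite[Lemma~2.2]{ysa} for $k \ge 1$ and to \cite{Duy-2016} for the case $k \ge 0$. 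Your main linear-algebra argument is essentially the ``more fundamental proof'' the paper attributes to \cite{Duy-2016} (there phrased via cycle and boundary subspaces $Z_k$ and $B_k$ rather than ranks of boundary maps, which is an equivalent bookkeeping), while the incremental alternative you keep in reserve --- adjoining missing simplices in nondecreasing dimension, each addition raising $\beta_j$ by one or lowering $\beta_{j-1}$ by one --- is the route taken in \cite{ysa}, which is why that reference only covers $k \ge 1$ directly. So your proposal is self-contained and slightly stronger than needed (it actually gives $|\beta_k(\cK)-\beta_k(\tilde\cK)| \le \max\bigl(S_k(\cK)-S_k(\tilde\cK),\, S_{k+1}(\cK)-S_{k+1}(\tilde\cK)\bigr)$), and it reproduces the intended proof of the cited source rather than of the paper, which supplies none.
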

Lemma~\ref{lem:Betti-estimate} for $k \geq 1$ is proved in {\cite[Lemma~2.2] {ysa}}. 
For more fundamental proof of Lemma~\ref{lem:Betti-estimate} with $k\geq 0$, see \cite{Duy-2016}.

To establish Theorems~\ref{thm:manifoldb} and \ref{thm:euclidb} for binomial point processes,
we use the standard technique of Poissonization from \cite{Penrose-book}: the coupling of a binomial point process with 
a particular type of a Poisson point process.
Suppose for given $\lambda > 0$, $N_\lambda$ is a
 Poisson random variable with parameter $\lambda$ and independent of 
$\{X_i\}_{i\geq 1}$. Then the point process $\cP_\lambda := \{ X_1, \ldots, X_{N_\lambda}\}$, called a Poissonized version of the binomial point processes,
is a  Poisson point process on $\R^N$ with intensity $\lambda f(x)$ in the Euclidean setting or a Poisson point process on $\cM$ with intensity $\lambda \kappa(z)$ in the manifold setting.
We denote this point process with $\lambda = n$
on  $\R^N$ and on $\cM$ by $\cP_n$ and $\cQ_n$ respectively. The reason for considering these Poissonized versions is because of the spatial independence property of Poisson point processes over disjoint sets. Moreover, the difference of the number of simplices, and hence, of Betti numbers, of the \v{C}ech complexes built over $\fX_n$ (resp.~$\cZ_n$) and $\cP_n$ (resp.~$\cQ_n$) is neglectable as $n \to \infty$.

\begin{lemma}\label{conversion_poisson_binomial}
{\rm(a)} Assume the same assumptions on the manifold $\cM$ and on the density function $\kappa(z)$ as in Theorem~{\rm\ref{thm:manifoldb}}. Then as $n \to \infty$ with $n^{1/m} r_n \to r \in (0, \infty)$,
\[
	      \frac{S_j(\cZ_n, r_n)}{n} - \frac{S_j(\cQ_n, r_n)}{n} \to 0~\text{\rm{a.s.}}
	      \]
	      
\noindent{\rm(b)} Assume the same assumptions on the metric space $(\cA, \rho)$ and on the density function $f(x)$ as in Theorem~{\rm\ref{thm:euclidb}}. Then as $n \to \infty$ with $n^{1/N} r_n \to r \in (0, \infty)$,
\[
	     \frac{S_j(\fX_n, r_n, \rho)}{n} - \frac{S_j(\cP_n, r_n, \rho)}{n} \to 0 \text{ \rm{a.s.}}
\]
\end{lemma}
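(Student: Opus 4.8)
The plan is to prove part (b) in detail and to obtain part (a) by the same argument (or, as sketched in the introduction, by localizing to charts), with $N$, $f$, $dx$ and the Euclidean volume scaling $r_n^N$ replaced throughout by the manifold dimension $m$, the density $\kappa$, the volume form $dz$ and the scaling $r_n^m$, and with $n^{1/m}r_n\to r$ in place of $n^{1/N}r_n\to r$. First I would exploit the coupling: since $\fX_n=\{X_1,\dots,X_n\}$ and $\cP_n=\{X_1,\dots,X_{N_n}\}$ are generated from the same i.i.d.\ sequence, one of them contains the other, and $S_j(\cdot,r,\rho)$ is non-decreasing under inclusion of the point set (and also non-decreasing in $r$, since $\cC(\fX,r,\rho)\subset\cC(\fX,r',\rho)$ for $r\le r'$; I record this for later use). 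Writing $m_-=\min(n,N_n)$, $m_+=\max(n,N_n)$ and $\mathcal I_n=\{m_-+1,\dots,m_+\}$, so that $|\mathcal I_n|=|N_n-n|$, monotonicity yields
\[
\bigl|S_j(\fX_n,r_n,\rho)-S_j(\cP_n,r_n,\rho)\bigr|\le\sum_{i\in\mathcal I_n}\xi_i^{(n)},
\]
where $\xi_i^{(n)}$ denotes the number of $j$-simplices of $\cC(\{X_1,\dots,X_{m_+}\},r_n,\rho)$ that contain the vertex $X_i$.

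Next I would estimate each $\xi_i^{(n)}$. If $X_i$ belongs to a $j$-simplex, then all $j+1$ of its vertices are pairwise within $\rho$-distance $2r_n$, because intersecting \v{C}ech balls have centres at distance at most $2r_n$; hence $\xi_i^{(n)}\le\binom{\nu_i^{(n)}}{j}$, where $\nu_i^{(n)}=|\{\ell\le m_+:\ell\ne i,\ \rho(X_i,X_\ell)\le 2r_n\}|$. By \emph{(P2)} (and, for the support, property \emph{(P1)}) the ball $B_\rho(X_i,2r_n)$ is contained in a Euclidean ball of radius $O(r_n)$, so, conditionally on $X_i=x$, the variable $\nu_i^{(n)}$ is dominated by a binomial variable with $m_+-1$ trials and mean $(m_+-1)\int_{B(x,2r_n/c)}f(y)\,dy$, which is $O\bigl(n r_n^N\bigr)$ times a local average of $f$ near $x$. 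Since $n^{1/N}r_n\to r$ forces $n r_n^N$ to a finite limit, combining standard moment bounds for binomial variables with the integrability hypotheses on $f$ gives, for every fixed $p\ge1$,
\[
\Ex\bigl[(\xi_i^{(n)})^{p}\bigr]\le C_p\int_{\R^N}f(x)^{\,pj+1}\,dx<+\infty ,
\]
uniformly for $i\le m_+$ with $m_+$ of order $n$. This is precisely where the hypothesis $\int f^{p}<\infty$ for all $p$ enters. Taking $p=1$, recalling that $N_n$ is independent of $\{X_i\}$, and using $\Ex|N_n-n|=O(\sqrt n)$, I obtain $\Ex\bigl|S_j(\fX_n,r_n,\rho)-S_j(\cP_n,r_n,\rho)\bigr|=O(\sqrt n)$, and hence convergence of the normalized difference to $0$ in $L^1$ and in probability.

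To upgrade this to almost sure convergence I would pass to higher moments, which is the step I expect to require the most care. Fix an integer $p\ge2$. Conditioning on $N_n$ makes $\mathcal I_n$ a deterministic block of indices, so Minkowski's inequality in $L^{2p}$ gives $\Ex\bigl[(\sum_{i\in\mathcal I_n}\xi_i^{(n)})^{2p}\mid N_n\bigr]\le\bigl(C\,|N_n-n|\bigr)^{2p}$ on the event $\{N_n\le 2n\}$, where $C$ bounds the $L^{2p}$-norms of the $\xi_i^{(n)}$ via the previous display; the complementary event $\{N_n>2n\}$ contributes negligibly because its probability decays faster than any power of $n$ while the difference is at most $\binom{N_n}{j+1}$ there. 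Integrating over $N_n$ and using the Poisson central-moment estimate $\Ex|N_n-n|^{2p}=O(n^{p})$ yields
\[
\Ex\bigl|S_j(\fX_n,r_n,\rho)-S_j(\cP_n,r_n,\rho)\bigr|^{2p}=O\bigl(n^{p}\bigr).
\]
Markov's inequality then gives $\Prob\bigl(n^{-1}|S_j(\fX_n,r_n,\rho)-S_j(\cP_n,r_n,\rho)|>\varepsilon\bigr)=O(n^{-p})$, which is summable for $p\ge2$, so the Borel–Cantelli lemma finishes the proof along the full sequence, with no need for a subsequence or an interpolation step. The main obstacles are thus the two estimates feeding this scheme: the decoupling of the random index set $\mathcal I_n$ from the degrees $\nu_i^{(n)}$ (handled by conditioning on the independent count $N_n$) and the uniform-in-$n$ control of the moments $\Ex[(\xi_i^{(n)})^{2p}]$, which rests squarely on $f$ (resp.\ $\kappa$) lying in every $L^{p}$ space. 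This is exactly why the hypotheses are stated for all $j\in\N$ rather than only for $j=k+1$.
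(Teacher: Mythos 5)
Your proof is correct, and it uses the same core decomposition as the paper: the difference of simplex counts is bounded by $\sum_{i}\xi_i$, the sum over the extra indices (those between $\min(n,N_n)$ and $\max(n,N_n)$) of the number of $j$-simplices containing $X_i$, followed by moment bounds, Markov's inequality and Borel--Cantelli. Where you genuinely diverge is in how the moments are estimated. The paper works exclusively with fourth moments: it bounds $\Prob(X_1\in B_\rho(x,r_n)\cap\cA)$ by $c_1 n^{-1/q}$ via H\"older's inequality with the specific exponent $p=4j+1$, expands $\Ex[\xi(X_1,\fX_m)^4]$ as a sum over quadruples of index sets graded by the cardinality of their union, and ends with $\Ex[|S_j(\cP_n,r_n,\rho)-S_j(\fX_n,r_n,\rho)|^4]=O(n^{2+\delta})$, $\delta=4j/(4j+1)<1$, which is just barely good enough ($\sum n^{\delta-2}<\infty$). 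You instead dominate the vertex degree $\nu_i^{(n)}$ by a binomial variable whose mean involves a local average of $f$, control \emph{all} moments of $\xi_i^{(n)}$ uniformly in $n$ (via the Hardy--Littlewood maximal function), and then combine conditional Minkowski with the Poisson central-moment bound $\Ex|N_n-n|^{2p}=O(n^p)$. This buys a cleaner and sharper bound ($O(n^2)$ for the fourth moment versus the paper's $O(n^{2+\delta})$) and avoids the combinatorial bookkeeping, at the price of invoking $L^q$-integrability of $f$ for arbitrarily large $q$, whereas the paper's route needs only $f\in L^{4j+1}$ --- immaterial here, since the hypotheses grant every $L^p$ norm. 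Two cosmetic remarks: your displayed bound $\Ex[(\xi_i^{(n)})^p]\le C_p\int f^{pj+1}$ should really be a finite sum of terms $\int f\,(Mf)^w\,dx$ over $w\le pj$ (each finite by H\"older and the maximal inequality, with the top term indeed comparable to $\int f^{pj+1}$); and, exactly like the paper, you implicitly use (P2) to place $B_\rho(x,2r_n)$ inside a Euclidean ball of radius $2r_n/c$, which is legitimate once $r_n$ is small. Your one-line treatment of part (a) also matches the paper's, which reduces (a) to the argument of (b) by checking, via the chart-induced metrics $\rho_i$, that the volume of $B(z_0,r)\cap\cM$ is $O(r^m)$.
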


By this lemma, it is sufficient to prove Theorems~\ref{thm:manifoldb} and \ref{thm:euclidb} only for Poisson point processes.
Note that the functions $f(x)$ and $\kappa(z)$ need not to be probability density functions to prove results for Poisson point processes.

% \begin{theorem}%[For Manifolds with Poisson]
% \label{manifold_main_result}
%Let $\cM\subset \R^N$ be an $m$-dimensional compact $C^1$ manifold. Assume that $\kappa (z)$ is a non-negative function on $\cM$ such that for all $j \in \N$, $\int_{\cM} \kappa(z)^j dz < +\infty$. Then 
%as $n \to \infty$ with $n^{1/m}r_n \to r \in (0, \infty) $, 
%\[
% \frac{\beta_k(\cQ_n, r_n)}{n} \to \int_{\cM} \hat \beta_k^{(m)} (\kappa(z), r) dz ~\text{\rm{a.s.}}
% \]
%\end{theorem}
%
%
%
%\begin{theorem}%[For Euclidean spaces with Poisson]
%\label{app1}
% Let $(\cA, \rho)$ be the same metric space as in Theorem~\ref{thm:euclidb}. Assume that $f(x)$ is a non-negative function on $\cA$ such that for all $j \in \N$, $ \int_{\R^N} f(x)^j dx < +\infty$. Then as $n \to \infty$ with $n^{1/N}r_n \to r \in (0, \infty) $, 
%\[
%        \frac{\beta_k(\cP_n, r_n, \rho)}{n}  \to \int_{\R^N} \hat {\beta}_k^{(N)} \left( \frac{f(x)}{D(x)}, r \right) D(x) dx ~ ~\text{\rm{a.s.}}
%\]
%\end{theorem}

\begin{remark}\label{remark_persistent homology}
Recently, the  strong laws of large numbers for persistent Betti numbers, a generalization of those for Betti numbers, and for persistence diagrams have been established in \cite{dhs} for ergodic stationary point processes on $\R^N$. Our results here can be easily extended in this setting, and may
 be of practical interest. For example, we can easily derive the following results for persistent Betti numbers
\[
\frac{\beta_k ^{s,t} (\{\cC(n^{1/N}\cP_n, r, \rho_n)\}_{r \ge 0} )} {n}  \to \int_{\R^N} \hat {\beta}_k ^{s,t}\left( \frac{f(x)}{D(x)} \right) D(x) dx \text{ a.s.~as } n \to \infty, 
\]
and for persistence diagrams 
\[
          \frac{1}{n} \xi_k (n^{1/N}\cP_n, \rho_n) \vto \int_{\R^N} \nu_k\left( \frac{f(x)}{D(x)}\right) D(x) dx  \text{ a.s.~as } n \to \infty,
\]
where $n^{1/N}\cP_n$ is a Poisson point process on $\R^N$ with intensity function $f(x/n^{1/N})$ and $\rho_n$ is a metric on $n^{1/N}\cA$ defined as $\rho_n (x, y) = n^{1/N} \rho(x/n^{1/N}, y/n^{1/N})$. The rest of the notations are taken from \cite{dhs}. 
\end{remark}

\subsection{Motivation for the assumptions on the metric $\rho$}\label{motivation}
Before giving the motivation, we give the definition of a $C^1$ manifold and its boundary, taken from \cite{munkfold}.
Let $\bH^m$ denote the upper half space in $\R^m$, consisting of $x \in \R^m$ for which the $m$th coordinate 
$x_m \geq 0$.

\begin{definition}\label{def_manifolds}
A nonempty subset $\cM$ of $\R^N$, endowed with the subset topology, is called an $m$-\textit{dimensional $C^1$ manifold} 
if for each $z \in \cM$ there exist an open set $M$ of $\cM$ containing $z$, a set $V$ that is open in either $\R^m$ or $\bH^m$, 
and a $C^1$ 
bijective map $\phi \colon V \to M$ such that  
\begin{enumerate}[(i)]
\item{$\phi^{-1}\colon M \to V$ is continuous;}
\item{the Jacobian of $\phi$ at $x$, denoted by $\J_{\phi}(x)$, has rank $m$ for all $x \in V$.}
\end{enumerate}
\end{definition}
The pair $(V, \phi)$ is called a chart for $z$. 
\begin{definition}
Let $(V, \phi)$ be a chart for $z \in \cM$. We say $z$ is a \textit{boundary point} of $\cM$ if $V$ is open in $\bH^{m}$ and $z  = \phi(x)$ for $x \in \R^{m-1}\times \{0\}$.
\end{definition}

Let $(V, \phi)$ be a chart and $U$ be a convex and compact subset of $\R^m$ such that $U \subset V$. Let $U$ be equipped with the metric 
$\rho(y, z):= \|\phi(y) - \phi(z)\|$. We will show that the metric $\rho$ on $U$ satisfies the two properties (P1) and (P2). Note that we consider now a subset of $\R^m$ (not $\R^N$).  Since $U$ is compact, it is sufficient to verify (P1).

Let $\phi = (\phi_{1}, \phi_{2}, \ldots, \phi_{N})$, where for all $j \in \{1,2, \ldots, N\}$,  $\phi_j$ is a $C^1$ function from $V$ to $\R$. 
Let $\nabla \phi_j (x)$ denote the derivative of $\phi_j$. Since $\phi_j$ is a $C^1$ function, 
for given $x \in U$ and $\varepsilon >0 $, there exists $\delta_j >0$ such that
whenever $ y \in B(x, \delta_j) \cap U$,
\begin{equation*}% \label{nabla_1}
      \| \nabla \phi_j (y) - \nabla \phi_j(x) \| \leq \varepsilon.
\end{equation*} 
Let $\delta = \min_{j} \delta_j$ and $ y, z \in B(x, \delta)\cap U$.
 Let $(y, z)$ denote the open line segment joining $y$ and $z$. By the mean value theorem for each $\phi_j$, 
there exists $c_j \in (y, z) \subset U$ such that 
\[
    \phi_j (y) - \phi_j(z) = \nabla \phi_j (c_j) (y - z).
\]
Therefore,
\begin{equation*} \label{nabla_2}
 \rho(y, z) = \|\C (y-z)\|,
\end{equation*}
where $\C$ is the $N \times m$ matrix whose $j$th row is equal to $\nabla \phi_j(c_j)$. 

Let $\B_{x} = \J_{\phi} (x)$ and $d_x(y, z) = \|\B_{x}(y - z)\| = \|(\B_{x}^t \B_{x})^{1/2} (y - z)\|$.
Then by the triangle inequality,
\begin{equation}\label{norm_ineq}
|\rho(y, z) - d_x(y,z)| =
  \big| \|\C (y-z) \| - \| \B_{x} (y-z)\| \big| \leq \| (\C - \B_{x}) (y - z)\|.
\end{equation}
Note that for an $N \times m$ matrix $\B=({b_{i,j}})$, it holds that 
\[
	\|\B v\|^2 \le \Big (\sum_{i, j} b_{i, j}^2 \Big)  \|v\|^2.
\]
Therefore, 
\[
	\| (\C - \B_{x}) (y-z)\|^2 \le \Big( \sum_{j=1}^{N} \|\nabla \phi_j(c_j) - \nabla \phi_j (x_0)\|^2 \Big) \|y - z\|^2 \le N \varepsilon^2 \|y - z\|^2.
\]
This inequality together with the inequality \eqref{norm_ineq} implies that 
\[
	|\rho(y, z) - d_x(y,z)| \le \sqrt{N} \varepsilon \|y - z\|.
\]
The verification is complete by taking into account of the inequality \eqref{min-max}.

\section{Simplex counts in the Euclidean setting}\label{section_for_simplex}
In this section, we show the strong law of large numbers for simplex counts $S_j(\cdot)$ in the Euclidean setting
and give the proof of the statement (b) of Lemma~\ref{conversion_poisson_binomial}. 
Note that the strong law for $S_j(\cdot)$ in the thermodynamic regime may follow from the general theory of local functionals due to Penrose and Yukich \cite{pen2007law, pen2003weak}. However, we present here an elementary proof by  calculating the order of the fourth moments.

Our argument is based on the following two results, called Palm theorems, to calculate the expectation and higher moments of simplex counts. Let $\cP(f(x))$ be a Poisson point process on $\R^N$ with intensity function $f(x)$, which is assumed to be integrable over $\R^N$. 
\begin{theorem}[{cf.~\cite[Theorem~1.6]{Penrose-book}}] \label{palm1} Suppose $j \in \N$ and $h(\cY)$ is a bounded measurable function defined on all 
finite subsets $\cY$ of $\R^N$, satisfying $h(\cY) = 0$ except when $\cY$ has $j$ elements. Then 
\[
	\Ex \bigg[\sum_{\cY \subset \cP(f(x))} h(\cY) \bigg] = \frac{1}{j!} \int_{(\R^N)^j} h(\y) \prod_{i=1}^j f(y_i) d\y,
\]
where, for $\y= (y_1, y_2, \ldots, y_j)\in (\R^N)^j$, we use the shorthands $h(\y) := h(y_1, y_2, \ldots, y_j)$ and $d\y := dy_1\cdots dy_j$.
\end{theorem}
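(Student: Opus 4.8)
The plan is to prove this first-order Palm formula by conditioning on the total number of points of $\cP := \cP(f(x))$ and exploiting the fundamental representation of a Poisson point process as a mixed binomial process. Since $f$ is assumed integrable, write $\Lambda := \int_{\R^N} f(x)\,dx < \infty$. The total count $\cP(\R^N)$ is then a Poisson random variable with parameter $\Lambda$, and, conditionally on $\{\cP(\R^N) = n\}$, the $n$ points of $\cP$ are distributed as $n$ i.i.d.\ random variables $X_1, \dots, X_n$, each with density $f(x)/\Lambda$. This is the only structural fact about Poisson processes I would invoke; everything else is a bookkeeping computation.

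First I would condition and use that $h$ vanishes on subsets of cardinality $\ne j$, so that, given $\cP(\R^N) = n$, the inner sum runs only over the $\binom{n}{j}$ subsets of size $j$ (and is empty, hence zero, when $n < j$). By the exchangeability of the i.i.d.\ points, every such subset contributes the same conditional expectation, giving
\[
	\Ex\bigg[\sum_{\cY\subset\cP} h(\cY) \,\Big|\, \cP(\R^N) = n\bigg] = \binom{n}{j}\, \Ex\big[h(\{X_1,\dots,X_j\})\big],
\]
and the single expectation on the right equals $\Lambda^{-j}\int_{(\R^N)^j} h(\y)\prod_{i=1}^j f(y_i)\,d\y$ by the definition of the product density. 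Next I would average over $n$ against the Poisson weights $e^{-\Lambda}\Lambda^n/n!$; since the integral factor is independent of $n$, it pulls out, and what remains is the elementary identity $\sum_{n\ge j} e^{-\Lambda}\frac{\Lambda^n}{n!}\binom{n}{j} = \frac{\Lambda^j}{j!}$, obtained by substituting $m = n-j$ and recognizing the exponential series. The prefactors $\Lambda^{-j}$ and $\Lambda^j/j!$ then combine to $1/j!$, yielding exactly the stated formula.

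The few points requiring care are routine rather than deep. Interchanging the expectation with the (random, finite) sum over subsets is justified by Tonelli's theorem, because $h$ is bounded, say by $M$, and $\sum_{\cY\subset\cP}|h(\cY)| \le M\binom{\cP(\R^N)}{j}$ has finite expectation $M\Lambda^j/j!$; this also legitimizes the rearrangement of the series in $n$. One should also note that writing $h(\y) = h(\{y_1,\dots,y_j\})$ as a symmetric function of the ordered tuple is unambiguous up to the diagonal $\{y_i = y_{i'}\}$, which carries zero mass under the absolutely continuous product measure $\prod_{i=1}^j f(y_i)\,d\y$ and may therefore be ignored. The main---and still mild---obstacle is simply setting up the exchangeability step cleanly, i.e.\ confirming that conditioning on the count really does return i.i.d.\ points with the normalized density $f/\Lambda$, so that the $\binom{n}{j}$ identical terms can be collapsed into a single expectation.
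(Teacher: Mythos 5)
Your proof is correct. Note that the paper itself gives no proof of this statement: it is quoted as a known Palm theorem from Penrose's \emph{Random Geometric Graphs} (Theorem~1.6), and your argument---conditioning on the total number of points, using the mixed binomial representation with i.i.d.\ points of density $f/\Lambda$, collapsing the $\binom{n}{j}$ exchangeable terms, and summing the Poisson weights via $\sum_{n\ge j} e^{-\Lambda}\frac{\Lambda^n}{n!}\binom{n}{j} = \frac{\Lambda^j}{j!}$---is essentially the same proof given in that cited source, with the integrability and null-diagonal caveats handled appropriately.
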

\begin{theorem}[{cf.~\cite[Theorem~1.7]{Penrose-book}}] \label{palm2}
Suppose $k \in \N$. Then under the same assumptions on the function $h(\cY)$ as in \emph{Theorem \ref{palm1}},
\begin{align*}
		&\Ex\bigg[ \sum_{\cY_{1} \subset \cP(f(x))} \cdots \sum_{\cY_{k} \subset \cP(f(x))}\Big( \prod_{i = 1}^k h(\cY_{i}) \Big) \one_{\{\cY_{i} \cap \cY_{j} = \emptyset \text{ for } 1 \le i < j \le k \}}  \bigg] \\
		&= \prod_{i = 1}^k \Ex\bigg[ \sum_{\cY_i \subset \cP(f(x))} h(\cY_{i})\bigg],
	\end{align*}
where $\one_{\{\cY_{i} \cap \cY_{j} = \emptyset \text{ for } 1 \le i < j \le k \}}$ is the indicator function that is equal to $1$ if and only if $\cY_{i} \cap \cY_{j} = \emptyset \text{ for } 1 \le i < j \le k$.
\end{theorem}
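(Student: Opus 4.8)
\emph{Plan.} The key observation I would exploit is that Theorem~\ref{palm1} is stated for an \emph{arbitrary} order $j\in\N$, so rather than peeling off one group $\cY_i$ at a time I would apply it a single time at the combined order $jk$, to a suitably aggregated test function. The disjointness indicator is precisely what guarantees that the union $\cY_1\cup\cdots\cup\cY_k$ is a genuine $(jk)$-element subset of $\cP$, which converts the nested disjoint sum into an ordinary sum over subsets of cardinality $jk$.

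Concretely, first I would record the bijection: an ordered $k$-tuple $(\cY_1,\dots,\cY_k)$ of pairwise disjoint $j$-element subsets of $\cP$ is the same datum as a $(jk)$-element subset $\cZ\subset\cP$ together with an ordered partition of $\cZ$ into $k$ blocks of size $j$ (the blocks being $\cY_1,\dots,\cY_k$ and $\cZ=\bigcup_i\cY_i$). Because $h$ vanishes off $j$-element sets and $j\ge 1$, equal or overlapping groups contribute nothing, so the indicator is automatically respected. Summing first over the partitions of a fixed $\cZ$, the left-hand side becomes $\Ex[\sum_{\cZ\subset\cP}H(\cZ)]$, where
\[
	H(\cZ)=\sum_{(\cY_1,\dots,\cY_k)}\ \prod_{i=1}^{k}h(\cY_i)
\]
and the inner sum runs over ordered partitions of $\cZ$ into $k$ blocks of size $j$, with $H(\cZ)=0$ unless $|\cZ|=jk$. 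This $H$ is a finite sum of products of bounded measurable functions, hence bounded, measurable, and supported on $(jk)$-element sets, so Theorem~\ref{palm1} applies at order $jk$ and yields $\Ex[\sum_{\cZ}H(\cZ)]=\frac{1}{(jk)!}\int_{(\R^N)^{jk}}H(\z)\prod_{l=1}^{jk}f(z_l)\,d\z$.

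Next I would evaluate this integral by symmetry. Since the weight $\prod_l f(z_l)$ is invariant under permutations of the coordinates, each of the $\frac{(jk)!}{(j!)^{k}}$ ordered partitions appearing in $H(\z)$ contributes the same value to the integral, namely the value obtained from the canonical partition into consecutive blocks $\y_i=(z_{(i-1)j+1},\dots,z_{ij})$. Hence
\[
	\frac{1}{(jk)!}\int H(\z)\prod_{l}f(z_l)\,d\z=\frac{1}{(j!)^{k}}\int_{(\R^N)^{jk}}\prod_{i=1}^{k}h(\y_i)\prod_{l=1}^{jk}f(z_l)\,d\z.
\]
The integrand factorizes across the $k$ coordinate blocks, and since $h$ is bounded and $\int_{\R^N}f<\infty$ each block integral is finite, so Fubini's theorem splits the integral into a product of $k$ identical factors $\int_{(\R^N)^{j}}h(\y_i)\prod f\,d\y_i=j!\,\Ex[\sum_{\cY_i\subset\cP}h(\cY_i)]$ by Theorem~\ref{palm1}. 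Collecting the prefactor $(j!)^{-k}$ against these $k$ factors of $j!$ gives $\prod_{i=1}^{k}\Ex[\sum_{\cY_i\subset\cP}h(\cY_i)]$, which is the claim.

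\emph{Main obstacle.} The only genuinely nonroutine point is the combinatorial/symmetry step: verifying the bijection so that the disjointness constraint translates exactly into a single $(jk)$-element subset, and confirming that the $\frac{(jk)!}{(j!)^{k}}$ ordered partitions each contribute equally under the symmetric weight $\prod_l f(z_l)$. I would double-check that repeated or overlapping groups are correctly excluded and that $H$ inherits boundedness and measurability from $h$. An alternative is to invoke the higher-order Mecke (factorial moment) formula directly, or to partition $\R^N$ into a fine mesh and exploit the spatial independence of $\cP$ over disjoint cells; but applying Theorem~\ref{palm1} once at order $jk$ keeps the argument self-contained and introduces no new machinery.
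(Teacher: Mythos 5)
Your proof is correct. Note, however, that the paper itself contains no proof of this statement: Theorem~\ref{palm2} is quoted from Penrose's book (Theorem~1.7 there), where it is established by conditioning on the total number of Poisson points and exploiting the fact that, given $N_\lambda = n$, the points are i.i.d.\ with density $f/\int f$; the disjoint-subset count $\binom{n}{j}\binom{n-j}{j}\cdots$ combined with the Poisson weights then yields the product formula. Your route is genuinely different and self-contained: you aggregate the $k$ disjoint groups into a single $(jk)$-element set, apply Theorem~\ref{palm1} once at order $jk$ to the function $H(\cZ)=\sum_{(\cY_1,\dots,\cY_k)}\prod_i h(\cY_i)$ (sum over ordered partitions of $\cZ$ into $k$ blocks of size $j$), and then use permutation invariance of $\prod_l f(z_l)\,d\z$ plus Fubini to factor the resulting integral. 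All the steps check out: the bijection between disjoint ordered tuples and pairs (set, ordered partition) is exact because the indicator kills overlapping tuples and $h$ kills blocks of the wrong cardinality; $H$ inherits boundedness and measurability as a finite sum of products; the $\frac{(jk)!}{(j!)^k}$ partition patterns contribute equally by relabelling coordinates; Fubini is justified since $h$ is bounded and $f$ is integrable, so $\int |H|\prod_l f(z_l)\,d\z \le M^k \frac{(jk)!}{(j!)^k}\bigl(\int f\bigr)^{jk}<\infty$; and the prefactor $\frac{1}{(jk)!}\cdot\frac{(jk)!}{(j!)^k}=(j!)^{-k}$ exactly cancels the $k$ factors of $j!$ produced by applying Theorem~\ref{palm1} blockwise. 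What your approach buys is that the multivariate Palm identity becomes a corollary of the univariate one, with no need to redo the de-Poissonization argument; what Penrose's conditioning argument buys is that it proves both theorems simultaneously from scratch and extends verbatim to groups of different sizes $j_1,\dots,j_k$ and different functions $h_i$ (your argument also extends to that generality, but you would need to redo the block-partition counting with $\frac{(\sum_i j_i)!}{\prod_i j_i!}$ patterns).
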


Let $(\cA, \rho)$ satisfy the assumptions in Theorem~\ref{thm:euclidb}. Let $h_{j, r_{n}, d} (\cY)$ be the indicator function which is equal to $1$ if and only if $\cY \subset \R^N$ is a $j$-simplex of radius $r_n$, measured using the metric $d$. 
 When $r_n=1$ and $d$ is the
Euclidean metric, we write $h_j (\cY) $ for $h_{j, 1, \left\|\cdot\right\|} (\cY)$. Then the number of $j$-simplices in the \v{C}ech complex $\cC(\fX, r, \rho)$ can be written as 
\begin{equation}\label{sim_form}
  S_j (\fX, r_n, \rho) = \sum_{\cY \subset \fX} h_{j, r_n, \rho} (\cY),
\end{equation}
where $\fX \subset \cA$ is a finite set.

For $r \in (0, \infty)$ and $\x= (x_1, x_2, \ldots, x_j)\in (\R^N)^j$, define
\[
      A_j^{(N)}(r) = \frac{r^{Nj}}{(j+1)!} \int_{(\R^N)^j}  h_j (0, \x) d\x,
\] 
where $h_j (0, \x)$ and $d\x$ stand for $h_j (0, x_1, x_2, \ldots, x_j )$ and $dx_1\cdots dx_j$ respectively.
   
Recall that $\cP_{n}$ is the Poisson point process on $\R^N$ with intensity function $nf(x)$, where $f(x)$ is supported on $\cA$ and $\int_{\R^N} f(x) dx < \infty$. 
We now show the right order of the expectations  $\Ex[S_j (\cP_{n}, r_n, \rho)]$ when $r_n$ tends to zero. Then by calculating the order of the fourth central moments, we shall prove that 
in the thermodynamic regime, $n^{-1}(S_j(\cP_n, r_n, \rho) - \Ex[S_j(\cP_n, r_n, \rho)])$ converges to zero almost surely as $n$ tends to infinity. More explanation on the fourth moments will be given later.

\begin{proposition}\label{general_simplicial}
Assume that  $\int_{\cA} f(x)^{j+1} dx < +\infty$ and $\lim _{n \to \infty} r_n =0 $. Then
\[
   \lim_{n \to \infty} r_n^{-Nj} n ^{-(j+1)} \Ex[S_j (\cP_{n}, r_n, \rho)] = A_j^{(N)}(1) \int_{\cA} \frac{f(x)^{j+1}}{D(x)^j} dx.
\]
\end{proposition}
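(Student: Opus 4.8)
The plan is to begin from the exact first-moment identity furnished by Palm theory and then isolate the leading-order behavior by a blow-up change of variables. Since $h_{j,r_n,\rho}(\cY)$ vanishes unless $\cY$ has $j+1$ elements, Theorem~\ref{palm1} applied with intensity $nf(x)$ gives
\[
\Ex[S_j(\cP_n,r_n,\rho)]=\frac{n^{j+1}}{(j+1)!}\int_{(\R^N)^{j+1}}h_{j,r_n,\rho}(y_0,\dots,y_j)\prod_{i=0}^{j}f(y_i)\,dy_0\cdots dy_j .
\]
I would then substitute $y_0=x$ and $y_i=x+r_nu_i$ for $1\le i\le j$ (Jacobian $r_n^{Nj}$), giving
\[
r_n^{-Nj}n^{-(j+1)}\Ex[S_j(\cP_n,r_n,\rho)]=\frac{1}{(j+1)!}\int_{\R^N}f(x)\Big(\int_{(\R^N)^{j}}h_{j,r_n,\rho}(x,x+r_nu_1,\dots,x+r_nu_j)\prod_{i=1}^{j}f(x+r_nu_i)\,du\Big)dx=:\frac{I_n}{(j+1)!}.
\]
It then suffices to show $I_n\to(j+1)!\,A_j^{(N)}(1)\int_{\cA}f(x)^{j+1}/D(x)^{j}\,dx$.

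First I would localize the inner integral. If $x,x+r_nu_1,\dots,x+r_nu_j$ form a $j$-simplex of radius $r_n$, then all their pairwise $\rho$-distances are at most $2r_n$, so by (P2) (for $n$ large) the rescaled differences satisfy $\norm{u_i-u_{i'}}\le 2/c$; hence the integrand is supported on a fixed compact set $K\subset(\R^N)^{j}$, uniformly in $n$ and $x$. On $K$ I would perform the metric replacement: fixing $\varepsilon\in(0,1)$ and $x\in\cA^\circ$, property (P1) (valid once $r_n$ is small, uniformly over $u\in K$) sandwiches the indicator,
\[
h_{j,1/(1+\varepsilon),d_x}(0,u)\le h_{j,r_n,\rho}(x,x+r_nu_1,\dots,x+r_nu_j)\le h_{j,1/(1-\varepsilon),d_x}(0,u),
\]
where $h_{j,s,d_x}$ denotes the $j$-simplex indicator of radius $s$ for the weighted norm $d_x(\cdot,\cdot)=\norm{\B_x(\cdot-\cdot)}$. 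Because $\B_x$ is linear, the change $v_i=\B_xu_i$ (Jacobian $D(x)^{j}$) together with the scaling $u\mapsto su$ yields $\int h_{j,s,d_x}(0,u)\,du=s^{Nj}D(x)^{-j}(j+1)!\,A_j^{(N)}(1)$; letting $\varepsilon\to0$ gives, for a.e.\ $x$ (recall $Leb^N(\partial\cA)=0$),
\[
\int_K h_{j,r_n,\rho}(x,x+r_nu_1,\dots,x+r_nu_j)\,du\longrightarrow D(x)^{-j}(j+1)!\,A_j^{(N)}(1).
\]

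It remains to replace $\prod_{i=1}^{j}f(x+r_nu_i)$ by $f(x)^{j}$ inside $I_n$. Writing the difference as a telescoping sum of $j$ terms, each containing a single factor $f(x+r_nu_k)-f(x)$, and applying the generalized Hölder inequality in $x$ with all exponents equal to $j+1$, I would bound the error by $\norm{f}_{j+1}^{\,j}\int_K\sum_{k=1}^{j}\norm{f(\cdot+r_nu_k)-f}_{j+1}\,du$. Since $\int f^{j+1}<\infty$ by hypothesis, translation is continuous in $L^{j+1}$, so each norm tends to $0$ and, as $Leb^N(K)<\infty$, the whole error vanishes. After this replacement the outer integrand is $f(x)^{j+1}\int_K h_{j,r_n,\rho}\,du$, dominated by $Leb^N(K)\,f(x)^{j+1}\in L^1$; dominated convergence in $x$ together with the pointwise limit above then yields $I_n\to(j+1)!\,A_j^{(N)}(1)\int f^{j+1}/D^{j}$, which is the claim. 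The main obstacle is exactly the low regularity of $f$: as $f$ is only assumed to lie in $L^{j+1}$, the translates $f(x+r_nu_i)$ do not converge pointwise, so the density-replacement step cannot be run as a naive pointwise/dominated-convergence argument but must instead go through the $L^{j+1}$ continuity-of-translation estimate, kept uniform over $u\in K$; a secondary technical point is ensuring that the \v{C}ech condition genuinely confines the rescaled configuration to a fixed compact set for small $r_n$.
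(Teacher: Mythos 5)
Your proof is correct, and its skeleton coincides with the paper's: both start from the Palm identity (Theorem~\ref{palm1}), split the integral into a main term carrying $f(x)^{j+1}$ and an error term carrying $\prod_i f(x_i)-f(x_0)^j$, and treat the main term identically --- (P2) confines the blown-up configuration to a compact set, (P1) sandwiches the $\rho$-indicator between $d_x$-indicators of radii $1/(1\pm\varepsilon)$, a linear change of variables identifies the pointwise limit $(j+1)!\,A_j^{(N)}(1)D(x)^{-j}$ for a.e.\ $x$ (using $Leb^N(\partial\cA)=0$), and dominated convergence finishes; this is the paper's Lemma~\ref{lem: ex_conv_sim} in rescaled coordinates. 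The genuine divergence is in the error term, which the paper isolates as Lemma~\ref{lem: ex_conv_sim2}: there the paper argues pointwise in $x_0$, dominating the integrand by $Cf(x_0)(Mf)(x_0)^j+Cf(x_0)^{j+1}$ via the Hardy--Littlewood maximal inequality and H\"older, and proving the limit is zero at Lebesgue points by induction on $j$ through the Lebesgue differentiation theorem. You instead telescope the product difference and apply the generalized H\"older inequality with all exponents equal to $j+1$, bounding the error by $\norm{f}_{L^{j+1}}^{j}\int_K\sum_{k}\norm{f(\cdot+r_nu_k)-f}_{L^{j+1}}\,du$, which vanishes by continuity of translation in $L^{j+1}$, uniformly over the compact $K$. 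Both routes use exactly the hypothesis $f\in L^{j+1}$, but yours trades the maximal-function machinery and the induction for a softer, quantitative norm estimate (the error is controlled by the $L^{j+1}$ translation modulus of $f$), while the paper's pointwise formulation is the one reused verbatim in the manifold section (Lemma~\ref{simp_count}) --- though your argument would transfer there just as well. One caveat you share with, rather than add to, the paper: confining the configuration to $K$ via (P2) uses the implication that $\rho(y,z)\le 2r_n$ forces $\|y-z\|\le 2r_n/c$, which strictly requires the a priori bound $\|y-z\|\le\delta$ in (P2); the paper makes the identical step when it writes $h_{j,r_n,\rho}\le h_{j,\tilde r_n}$, so this does not count against you.
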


It follows from the equation \eqref{sim_form} and Theorem~\ref{palm1} that
\begin{align*}
  & r_n^{-Nj} n ^{-(j+1)} \Ex[S_j (\cP_{n}, r_n, \rho)]  \\
  &= \frac{r_n^{-Nj}}{(j+1)!} \int_{\cA^{j+1}} h_{j, r_{n}, \rho} (x_0, \x) \prod_{i=1}^j f(x_i) f(x_0) d\x d{x_0}\\
   &=  \frac{r_n^{-Nj}}{(j+1)!} \int_{\cA}\left(\int_{\cA^{j}} h_{j, r_{n}, \rho} (x_0, \x)d\x\right) f(x_0)^{j+1} dx_0 \\
  &\quad +  \frac{r_n^{-Nj}}{(j+1)!} \int_{\cA^{j+1}} h_{j, r_{n}, \rho} (x_0, \x) \left( \prod_{i=1}^j f(x_i) -  f(x_0)^{j} \right)f(x_0) d\x dx_0\\
  &= I_1^n+ I_2^n.
\end{align*}
We claim that $I_1^n$ is asymptotic to $ A_j^{(N)}(1) \int_{\cA} \frac{f(x)^{j+1}}{D(x)^j} dx$, while $I_2^n$ tends to zero as $n$ tends to infinity, 
from which Proposition~\ref{general_simplicial} follows. Our claims are proved in the next two lemmas.

\begin{lemma}\label{lem: ex_conv_sim}
As $n \to \infty$,
 \[
      I_1^n =  \frac{r_n^{-Nj}}{(j+1)!}  \int_ {\cA} \left( \int _{\cA^j}  h_{j, r_{n}, \rho} (x_0, \x) d\x \right)f(x_0)^{j+1} dx_0 \to A_j^{(N)}(1) \int_{\cA} \frac{f(x)^{j+1}}{D(x)^j} dx.
 \] 
\end{lemma}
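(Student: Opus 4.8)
The plan is to fix the outer point $x_0$, analyse the rescaled inner integral, and then pass to the limit under the outer integral by dominated convergence. Since $Leb^N(\partial\cA)=0$, it suffices to treat $x_0\in\cA^\circ$. For such an $x_0$ I would substitute $x_i = x_0 + r_n u_i$ ($i=1,\dots,j$), so that $d\x = r_n^{Nj}\,d\mathbf u$ and the inner factor becomes
\[
J_n(x_0):= r_n^{-Nj}\int_{\cA^j} h_{j,r_n,\rho}(x_0,\x)\,d\x = \int_{\big((\cA-x_0)/r_n\big)^{j}} h_{j,r_n,\rho}\big(x_0, x_0+r_n\mathbf u\big)\,d\mathbf u .
\]
The target is to show $J_n(x_0)\to D(x_0)^{-j}(j+1)!\,A_j^{(N)}(1)$ for every $x_0\in\cA^\circ$, after which the lemma follows from $I_1^n=\tfrac{1}{(j+1)!}\int_\cA J_n(x_0)f(x_0)^{j+1}\,dx_0$ together with dominated convergence over $x_0$.

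For the pointwise limit I would avoid dealing with the discontinuity set of the indicator directly, and instead sandwich it by means of (P1). Fix $\varepsilon\in(0,1)$ and the corresponding $\delta_{x_0,\varepsilon}$; for $n$ large all the relevant points (the $x_0+r_nu_i$ and the candidate common intersection points) lie in $B(x_0,\delta_{x_0,\varepsilon})$, so (P1) gives $(1-\varepsilon)d_{x_0}\le\rho\le(1+\varepsilon)d_{x_0}$ on them. Comparing the $\rho$-balls with $d_{x_0}$-balls then yields, for $n$ large,
\[
h_{j,\frac{1}{1+\varepsilon},d_{x_0}}(0,\mathbf u)\ \le\ h_{j,r_n,\rho}\big(x_0,x_0+r_n\mathbf u\big)\ \le\ h_{j,\frac{1}{1-\varepsilon},d_{x_0}}(0,\mathbf u),
\]
using the translation invariance and homogeneity of $d_{x_0}$. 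Integrating over $\mathbf u$ (the domain $\big((\cA-x_0)/r_n\big)^{j}$ eventually contains the compact supports of the two bounding indicators) and computing each bounding integral by the linear change of variables $v_i=\B_{x_0}u_i$, whose Jacobian is $D(x_0)^{j}$, followed by Euclidean scaling, reduces everything to $A_j^{(N)}(1)$: one finds $\int_{(\R^N)^j} h_{j,s,d_{x_0}}(0,\mathbf u)\,d\mathbf u = D(x_0)^{-j}s^{Nj}(j+1)!\,A_j^{(N)}(1)$. Letting first $n\to\infty$ and then $\varepsilon\to0$, so that $(1\pm\varepsilon)^{-Nj}\to1$, gives the claimed value of $\lim_n J_n(x_0)$.

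For the outer dominated convergence I would establish a constant bound on $J_n(x_0)$ that is uniform in $x_0$ for all large $n$. Since a nonempty common intersection forces $\rho(x_0,x_i)\le 2r_n$ for every $i$, we have
\[
J_n(x_0)\le\Big(r_n^{-N}\,Leb^N\big(B_\rho(x_0,2r_n)\cap\cA\big)\Big)^{j},
\]
and (P2) bounds $Leb^N\big(B_\rho(x_0,2r_n)\cap\cA\big)$ by a constant multiple of $r_n^{N}$; hence $J_n(x_0)\le K$ for some constant $K$ and all large $n$. Because $\int_\cA f^{j+1}\,dx<\infty$, the functions $x_0\mapsto J_n(x_0)f(x_0)^{j+1}$ are dominated by $Kf^{j+1}\in L^1(\cA)$, and dominated convergence delivers the limit $A_j^{(N)}(1)\int_\cA f(x)^{j+1}/D(x)^j\,dx$.

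The main obstacle is the two interchanges of limit and integral across the indicator's discontinuity. I expect the (P1) sandwich to be the crucial device: it replaces the delicate pointwise convergence of $h_{j,r_n,\rho}$, whose limit is discontinuous precisely on the boundary configurations where the smallest enclosing $d_{x_0}$-ball has radius exactly one, by monotone comparison with fixed $d_{x_0}$-indicators, so that no measure-zero boundary argument is needed. The genuinely technical point is then only the uniform $L^1$-domination coming from (P2), which hinges on $\rho$-balls of small radius being Euclidean-small.
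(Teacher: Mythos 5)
Your proposal is correct and takes essentially the same route as the paper's proof: dominated convergence in $x_0$, with a (P1) sandwich between $d_{x_0}$-indicators at radii scaled by $(1\pm\varepsilon)^{-1}$ plus the linear change of variables $v_i=\B_{x_0}u_i$ for the pointwise limit, and a (P2)-based uniform volume bound (a common witness forces all $x_i$ into $B_\rho(x_0,2r_n)$) to get the integrable dominating function $K f^{j+1}$. The only differences are cosmetic: you rescale by $r_n$ before sandwiching rather than after, and your constant bookkeeping, $\int_{(\R^N)^j} h_{j,s,d_{x_0}}(0,\mathbf{u})\,d\mathbf{u}=D(x_0)^{-j}s^{Nj}(j+1)!\,A_j^{(N)}(1)$, is the correct one, the factor $(j+1)!$ cancelling against the prefactor $1/(j+1)!$ in $I_1^n$ (the paper's corresponding display silently drops this factor).
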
 
\begin{proof}
The idea is to use the Dominated Convergence Theorem (DCT) for the integral with respect to $dx_0$. We first show that for $n$ large enough the integrand is dominated by an integrable function and then compute the point-wise limit.
Let 
\[ 
       F_n (x_0) = r_n^{-Nj} \int _{\cA^j}  h_{j, r_{n}, \rho} (x_0, \x) d\x.
\]
Since the metric $\rho$ satisfies the property (P$2$) and $r_n \to 0$ as $n \to \infty$, there exists $n_0 \in \N$ such that for all $n \geq n_0$, 
$h_{j, r_{n}, \rho} (x_0, \x) \leq  h_{j, \tilde{r}_{n}} (x_0, \x)$, where $\tilde{r}_n = r_n/c$ with $c$ being the constant in (P$2$). Therefore, 
\[
    F_n (x_0) \leq r_n^{-Nj} \int _{\cA^j}  h_{j, \tilde{r}_{n}} (x_0, \x) d\x.
\]
Applying the change of variables $x_i = x_0+\tilde{r}_n y_i$ for $1\leq i \leq j$ yields
\[
     F_n (x_0) \leq  c^{-Nj}\int _{B(0,2)^j}  h_{j} (0, \y) d\y \leq (c^{-N}Leb^N(B(0,2)))^j.
\]
Thus for $n$ large enough, the integrand is dominated by $const\cdot f(x_0)^{j+1}$ which is integrable by our assumption.

Next we consider the point-wise limit of $F_n$. Let  $x_0 \in \cA^\circ$. Since $\rho$ satisfies (P$1$) and $r_n \to 0$, 
given $\varepsilon > 0$, there exists $n_{x_0} \in \N$ such that for all $n \geq n_{x_0}$,
\begin{equation}\label{relationh_j}
       h_{j, r_{n}^+, d_{x_0}} (x_0, \x)  \leq  h_{j, r_{n}, \rho} (x_0, \x) \leq h_{j, r_{n}^-, d_{x_0}} (x_0, \x),
\end{equation}
where $r_n^+ = r_n/(1+ \varepsilon)$ and $r_n^- = r_n/(1- \varepsilon)$.
Let 
\[
   I_n^{+\varepsilon}(x_0) =   r_n^{-Nj} \int _{\cA^j}  h_{j, r_{n}^+, d_{x_0}} (x_0, \x) d\x ,\quad  I_n^{-\varepsilon}(x_0) = r_n^{-Nj} \int _{\cA^j}  h_{j, r_{n}^-, d_{x_0}} (x_0, \x) d\x.
\]
Then from the relation \eqref{relationh_j}, we have that
\[ 
     \lim_{\varepsilon \to 0} \lim_{n \to \infty} I_n^{+\varepsilon}(x_0) \leq \liminf_{n\to \infty} F_n (x_0) \leq \limsup_{n\to \infty} F_n (x_0) \leq  \lim_{\varepsilon \to 0} \lim_{n \to \infty} I_n^{-\varepsilon}(x_0).
\]
Consider $I_n^{+\varepsilon}(x_0)$.
By applying the change of variables $x_i = x_0+r_n^+ y_i$ for $1\leq i \leq j$, the indicator function 
$h_{j, r_{n}^+, d_{x_0}} (x_0, x_0+r_n^+ y_1, \ldots, x_0+r_n^+ y_j)$ is equal to $h_{j, 1, d_{x_0}} (0, \y)$ for all large enough $n$ since $x_0 \in \cA^\circ$ and $r_n \to 0$.
Thus, for $x_0 \in \cA^\circ$,
\[
\lim_{n \to \infty} I_n^{+\varepsilon}(x_0) = (1+ \varepsilon)^{-Nj} \int _{(\R^N)^j}  h_{j, 1, d_{x_0}} (0, \y) d\y =  A_j^{(N)}(1)  \frac{(1+ \varepsilon)^{-Nj}}{ D(x_0)^j}.
\]
Similarly, for $x_0 \in \cA^\circ$,
\[
   \lim_{n \to \infty} I_n^{-\varepsilon}(x_0) =  A_j^{(N)}(1) \frac{(1- \varepsilon)^{-Nj}}{ D(x_0)^j}.
\]
Therefore, $F_n(x_0)$ converges to
$A_j^{(N)}(1)/D(x_0)^j$ almost everywhere because of the assumption $Leb^N(\partial \cA)=0$, from which the desired result follows. 
\end{proof}

\begin{remark}
\begin{enumerate}[(i)]
\item{If $\rho$ is homogeneous and translation invariant, the change of variables can be applied directly in $F_n (x_0)$.} 
\item{If $\cA = \R^N$ then for $x_0 \in \R^N$, the function 
$h_{j, {r}_{n}^+, d_{x_0}} (x_0, x_0+ {r}_n^+ y_1, \ldots, x_0+{r}_n^+ y_j)$ is equal to $h_{j, 1, d_{x_0}} (0, \y)$ for all $n \in \N$.}
\end{enumerate}
\end{remark}

 \begin{lemma}\label{lem: ex_conv_sim2}
As $n\to \infty$,
\[
    I_2^n = \frac{r_n^{-Nj}}{(j+1)!} \int_{\cA^{j+1}} h_{j, r_{n}, \rho} (x_0, \x) \left( \prod_{i=1}^j f(x_i) -  f(x_0)^{j} \right)f(x_0) d\x dx_0 \to 0.
  \]
 \end{lemma}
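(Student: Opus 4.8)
The plan is to exploit the fact that the factor $\prod_{i=1}^j f(x_i) - f(x_0)^j$ vanishes when all the $x_i$ coincide with $x_0$, while the indicator $h_{j,r_n,\rho}(x_0,\x)$ forces $x_1,\dots,x_j$ to lie within distance $O(r_n)$ of $x_0$. Since $f$ is only assumed to lie in $L^{j+1}$ and need not be continuous, the bridge between these two observations will be the continuity of translation in $L^p$. First I would assume $j \ge 1$ (for $j = 0$ the integrand is identically zero) and decompose the difference by the telescoping identity
\[
\Big(\prod_{i=1}^j f(x_i) - f(x_0)^j\Big) f(x_0) = \sum_{l=1}^j \Big(\prod_{i=1}^{l-1} f(x_i)\Big)\big(f(x_l) - f(x_0)\big)\, f(x_0)^{j-l+1},
\]
so that it suffices to show that each of the $j$ resulting terms $I_{2,l}^n$ tends to zero. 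The point of the telescoping is that every summand now carries exactly one difference factor $f(x_l) - f(x_0)$, which is what the translation-continuity will control.

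For a fixed $l$, I would first bound $|I_{2,l}^n|$ by the same integral with $h_{j,r_n,\rho}$ replaced by the Euclidean indicator $h_{j,\tilde r_n}$ (with $\tilde r_n = r_n/c$) and with $|f(x_l)-f(x_0)|$ in place of $f(x_l)-f(x_0)$; the domination $h_{j,r_n,\rho} \le h_{j,\tilde r_n}$ for large $n$ is exactly the one established in the proof of Lemma~\ref{lem: ex_conv_sim} from property (P2). The change of variables $x_i = x_0 + \tilde r_n y_i$ ($1 \le i \le j$) turns $h_{j,\tilde r_n}(x_0,\x)$ into $h_j(0,\y)$, contributes the factor $r_n^{-Nj}\tilde r_n^{Nj} = c^{-Nj}$, and confines each $y_i$ to $B(0,2)$; after an application of Fubini it leaves
\[
|I_{2,l}^n| \le \frac{c^{-Nj}}{(j+1)!}\int_{B(0,2)^j} h_j(0,\y)\bigg[\int_{\cA}\Big(\prod_{i=1}^{l-1} f(x_0+\tilde r_n y_i)\Big)\big|f(x_0+\tilde r_n y_l)-f(x_0)\big|\, f(x_0)^{j-l+1}\, dx_0\bigg]\, d\y.
\]
The inner $dx_0$-integral involves the $l-1$ factors $f(\cdot+\tilde r_n y_i)$, the single difference factor, and the factor $f^{\,j-l+1}$, whose total degree in $f$ equals $j+1$. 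Applying the generalized Hölder inequality with exponent $j+1$ on the $l$ factors $f(\cdot+\tilde r_n y_1),\dots,f(\cdot+\tilde r_n y_{l-1})$ and the difference, and exponent $\tfrac{j+1}{j-l+1}$ on $f^{\,j-l+1}$ (these reciprocals sum to $1$), and using the translation invariance of the $L^{j+1}$-norm, bounds it by $\|f\|_{L^{j+1}}^{\,j}\,\|f(\cdot+\tilde r_n y_l)-f\|_{L^{j+1}}$.

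Since $\int_{B(0,2)^j} h_j(0,\y)\,d\y$ is a finite constant and $\|\tilde r_n y_l\| \le 2\tilde r_n \to 0$ uniformly for $y_l \in B(0,2)$, the whole expression is dominated by a constant multiple of $\sup_{\|t\| \le 2\tilde r_n}\|f(\cdot+t)-f\|_{L^{j+1}}$, which tends to $0$ by the continuity of translation in $L^{j+1}(\R^N)$ (this is where the hypothesis $\int_\cA f^{\,j+1} < \infty$ enters, and the uniformity follows from continuity at $t=0$ of $t\mapsto\|f(\cdot+t)-f\|_{L^{j+1}}$). Summing over $l=1,\dots,j$ yields $I_2^n \to 0$. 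I expect the main obstacle to be precisely the absence of continuity of $f$: the pointwise smallness of $\prod_{i}f(x_i)-f(x_0)^j$ is simply unavailable, so the genuine work is to reorganize the integral — via telescoping, the (P2)-domination, and rescaling — until the only analytic input needed is the uniform $L^{j+1}$-continuity of translations, with Hölder's inequality absorbing the remaining powers of $f$.
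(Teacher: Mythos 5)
Your proof is correct, but it takes a genuinely different route from the paper's. Interestingly, the paper's proof also peels off one difference factor at a time (its inductive step bounds $F_n(x_0)$ by a term carrying $|f(x_j)-f(x_0)|$ plus a term carrying $\bigl|\prod_{i=1}^{j-1}f(x_i)-f(x_0)^{j-1}\bigr|$, which is your telescoping in disguise), but its analytic engine is pointwise rather than norm-based: it fixes a Lebesgue point $x_0$ of $f$, shows the rescaled inner integral tends to $0$ there via the Lebesgue differentiation theorem, and then integrates in $x_0$ by the Dominated Convergence Theorem, the dominating function being $Cf\,(Mf)^j+Cf^{j+1}$, whose integrability rests on the Hardy--Littlewood maximal inequality ($f\in L^{j+1}\Rightarrow Mf\in L^{j+1}$) combined with H\"older. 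You instead stay at the level of $L^{j+1}$ norms: after the same (P2)-domination and rescaling used in Lemma~\ref{lem: ex_conv_sim}, Tonelli plus the generalized H\"older inequality (your exponent bookkeeping $l/(j+1)+(j-l+1)/(j+1)=1$ is right, and translation invariance handles the shifted factors) compress everything into $\sup_{\|t\|\le 2r_n/c}\|f(\cdot+t)-f\|_{L^{j+1}}$, which vanishes by continuity of translations in $L^p$; the variables $x_{l+1},\dots,x_j$ that carry no factor of $f$ are harmless because $h_j(0,\y)$ confines them to $B(0,2)$. What your route buys: it avoids both the maximal-function machinery and the Lebesgue differentiation theorem, and it is quantitative --- it bounds $|I_2^n|$ by a constant times the $L^{j+1}$ modulus of continuity of $f$ at scale $r_n$, hence yields a rate whenever $f$ has any smoothness, whereas DCT gives none. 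What the paper's route buys: the Lebesgue-point mechanism is the same one reused later (e.g.\ in Lemma~\ref{lem:1_estimate} and in the manifold section), so the authors amortize a single toolkit across several proofs. Both arguments use exactly the hypothesis $\int_{\cA}f^{j+1}\,dx<\infty$, and both inherit the same reliance on the domination $h_{j,r_n,\rho}\le h_{j,r_n/c}$ from (P2), which you correctly cite rather than reprove.
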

 \begin{proof}
 The absolute value of $I_2^n$ is bounded by 
 \[
        \int_{\R^N} \left(\int_{B_\rho(x_0, 2r_n)^{j}} r_n ^{-Nj} \left| \prod_{i=1}^j f(x_i) -  f(x_0)^{j} \right| d\x\right) f(x_0) dx_0.
 \]
 We claim that the above expression tends to zero as $n$ tends to infinity. To show this, DCT is used again. Let 
 \[
      F_n(x_0) = \left(\int_{B_\rho(x_0, 2r_n)^{j}} r_n ^{-Nj} \left| \prod_{i=1}^j f(x_i) -  f(x_0)^{j} \right| d\x\right) f(x_0).
\]
Let us first show that $F_n$ is uniformly bounded in $n$ by an integrable function. Clearly, 
\[
      F_n(x_0) \leq r_n^{-Nj} f(x_0) \int_{B_\rho(x_0, 2r_n)^{j}} \prod_{i=1}^j f(x_i)  d\x +  r_n^{-Nj} f(x_0)^{j+1} \left(Leb^N(B_{\rho}(x_0, 2r_n))\right)^j.
\] 
Since $\rho$ satisfies (P$2$), for all large enough $n$, $B_{\rho}(x_0, 2r_n) \subset B(x_0, 2r_n/c)$.
Therefore, with $C =\left( Leb^N(B(0, 2/c))\right)^j$, we have
\[
      F_n(x_0) \leq C f(x_0) \left( \frac{1}{Leb^N(B(x_0, 2r_n/c))} \int_{B(x_0, 2r_n/c)} f(x)  dx\right)^j + Cf(x_0)^{j+1}.
\]
 Let $(Mf)(x)$ be the centered Hardy--Littlewood maximal function, i.e., 
\[
      (Mf)(x_0) = \sup \frac{1}{Leb^N(B(x_0, r))} \int_{B(x_0, r) } f(x)  dx, 
\]
where the supremum is taken over all the balls in $\R^N$ whose center is $x_0$. Then
\[
      F_n(x_0) \leq C f(x_0) (Mf)(x_0)^j + Cf(x_0)^{j+1}.
\]
Thus, we only need to show that the function $f(x_0) (Mf)(x_0)^j$ is integrable to conclude that $F_n$ is dominated by an integrable function for $n$ large enough. Since $f \in L^p(\R^N)$ for $1<p\leq \infty$ implies $Mf \in L^p(\R^N)$, 
by H\"{o}lder's inequality with $p = j+1$ and $q = (j+1)/j$, 
\[
     \int_{\R^N} f(x_0) (Mf)(x_0)^j d{x_0} \leq \left( \int_{\R^N} f(x_0)^{j+1} d{x_0} \right)^{\frac{1}{j+1}}\left(  \int_{\R^N} (Mf)(x_0)^{j+1} d{x_0}\right)^{\frac{j}{j+1}} < \infty.
\]

It remains to show that $F_n(x_0)$ converges to zero almost everywhere. 
Let $x_0$ be a Lebesgue point of $f$ such that $f(x_0) < \infty$.  The convergence is proved by the induction on $j$.
The inductive step is to bound $F_n (x_0)$ by  
\begin{align*}
      \bigg(\int_{B(x_0, 2r_n/c)^{j}} r_n ^{-Nj} &|f(x_j) - f(x_0)| \prod_{i=1}^{j-1} f(x_i) d\x \bigg) f(x_0)\\
       &+   \left(\int_{B(x_0, 2r_n/c)^{j}} r_n ^{-Nj} \left| \prod_{i=1}^{j-1} f(x_i) - f(x_0)^{j-1} \right| f(x_0) d\x \right) f(x_0).
\end{align*}
The first term with $V_n = Leb^N(B(x_0, 2r_n/c))$ can be written as
\[
     C \left(\frac{1}{V_n} \int_{B(x_0, 2r_n/c)} f(x) dx \right)^{j-1} \left( \frac{1}{V_n} \int_{B(x_0, 2r_n/c)} |f(x_j) - f(x_0)| dx_j\right) f(x_0),
\]
which converges to zero since $x_0$ is a Lebesgue point of $f$ with $f(x_0)< \infty$. The second term also tends to zero by the inductive hypothesis.
Hence, by the Lebesgue differentiation theorem and the almost everywhere finiteness of $f$, the function $F_n$ tends to zero almost everywhere. 
This completes the proof.
 \end{proof}

In the thermodynamic regime, Proposition~\ref{general_simplicial} is restated as follows.
\begin{corollary}\label{corollary}
Assume that $\int_{\cA} f(x)^{j+1} dx < +\infty$ and $ \lim _{n \to \infty} n^{1/N} r_n = r \in (0, \infty)$. Then
\[
   \lim_{n \to \infty}  \frac{\Ex[S_j (\cP_n, r_n, \rho)]} {n} = A_j^{(N)}(r) \int_{\cA} \frac{f(x)^{j+1}}{D(x)^j} dx.
\]
\end{corollary}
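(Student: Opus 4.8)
The plan is to deduce the corollary directly from Proposition~\ref{general_simplicial} by a simple rescaling of the radius. The first thing to note is that the thermodynamic hypothesis $n^{1/N} r_n \to r \in (0,\infty)$ in particular forces $r_n \to 0$, so Proposition~\ref{general_simplicial} applies verbatim (the integrability hypothesis on $f^{j+1}$ is identical in both statements). The key algebraic observation I would record is the factorization
\[
   \frac{\Ex[S_j(\cP_n, r_n, \rho)]}{n} = (n^{1/N} r_n)^{Nj} \cdot \Big( r_n^{-Nj}\, n^{-(j+1)}\, \Ex[S_j(\cP_n, r_n, \rho)] \Big),
\]
which is just bookkeeping of the powers of $n$ and $r_n$, using the identity $r_n^{Nj} n^{j} = (n r_n^N)^j = (n^{1/N} r_n)^{Nj}$.

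Next I would pass to the limit in each of the two factors separately. The parenthesised factor converges to $A_j^{(N)}(1) \int_{\cA} f(x)^{j+1}/D(x)^j\, dx$ by Proposition~\ref{general_simplicial}, and the prefactor $(n^{1/N} r_n)^{Nj}$ converges to $r^{Nj}$ by the thermodynamic hypothesis. Multiplying the two limits gives
\[
   \lim_{n\to\infty} \frac{\Ex[S_j(\cP_n, r_n, \rho)]}{n} = r^{Nj}\, A_j^{(N)}(1) \int_{\cA} \frac{f(x)^{j+1}}{D(x)^j}\, dx.
\]

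Finally I would match this against the stated limit using the scaling built into the definition of $A_j^{(N)}$: from its integral formula $A_j^{(N)}(r) = \frac{r^{Nj}}{(j+1)!}\int_{(\R^N)^j} h_j(0,\x)\, d\x$, one reads off immediately that $A_j^{(N)}(r) = r^{Nj}\, A_j^{(N)}(1)$, so the right-hand side above is exactly $A_j^{(N)}(r) \int_{\cA} f(x)^{j+1}/D(x)^j\, dx$, as claimed. There is no genuine obstacle in this argument; the only points requiring care are confirming $r_n \to 0$ so that the proposition is applicable and tracking the exponents correctly in the factorization, after which the conclusion is immediate from the product-of-limits rule and the homogeneity of $A_j^{(N)}$ in $r$.
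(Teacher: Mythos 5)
Your proposal is correct and matches the paper's intent exactly: the paper presents this corollary as an immediate restatement of Proposition~\ref{general_simplicial} in the thermodynamic regime, and the argument it implicitly relies on is precisely your factorization $\Ex[S_j]/n = (n^{1/N}r_n)^{Nj}\cdot\bigl(r_n^{-Nj}n^{-(j+1)}\Ex[S_j]\bigr)$ together with the homogeneity $A_j^{(N)}(r) = r^{Nj}A_j^{(N)}(1)$ read off from the definition. You have simply written out the routine bookkeeping (including the observation that $n^{1/N}r_n \to r < \infty$ forces $r_n \to 0$) that the paper leaves to the reader.
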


By defining $\hat S_j^{(N)} \left(\lambda, r\right) := A_j^{(N)}(r) \lambda^{j+1}$, the limiting constant in the above corollary can also be written as $\int_{\cA} \hat S_j^{(N)} \left(f(x)/D(x), r\right) D(x) dx$. 
Note that $\hat S_j^{(N)} \left(\lambda, r\right)$ is nothing but the limiting constant of $L^{-1}\Ex[S_j(\cP_L(\lambda), r)]$ when $L \to \infty$ (for the definition of $\cP_L(\lambda)$, see the introduction). 
The following corollary is also a result on homogeneous Poisson point processes but with the metric $d_{x_0}$, which is needed in Section~\ref{compact_region}. 
\begin{corollary}\label{corhom}
Let $x_0 \in \cA^\circ$. Let $E \subset \R^N$ be a Borel set with $Leb^N(E) > 0$, and let $E_n = n^{1/N}E$. Then for any $\lambda \geq 0$ and $r \in (0, \infty)$, 
\[
 \lim_{n \to \infty}  \frac{\Ex[S_j (\cP(\lambda)|_{E_n}, r, d_{x_0})]} {Leb^N(E_n)} = \hat S_j^{(N)} \left(\frac{\lambda}{D(x_0)}, r\right) D(x_0),
\]
where $\cP(\lambda)|_{E_n}$ is the restriction of $\cP(\lambda)$ on $E_n$.
\end{corollary}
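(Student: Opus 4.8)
The plan is to compute the expected simplex count directly from the first Palm theorem (Theorem~\ref{palm1}), then exploit the translation invariance and homogeneity of $d_{x_0}$ to separate the ``bulk'' constant from the boundary effect, and finally rescale. Since $x_0 \in \cA^\circ$, the metric $d_{x_0}(y,z)=\|\B_{x_0}(y-z)\|$ is a genuine translation-invariant, homogeneous metric obeying the \emph{global} comparison \eqref{prop_metric_d}, so $h_{j,r,d_{x_0}}(0,\cdot)$ vanishes unless $d_{x_0}(0,y_i)\le 2r$, hence is supported in $B(0,2r/c)^j$; this boundedness is what keeps every integral finite. Throughout I assume, as is implicit in the applicability of Theorem~\ref{palm1}, that $Leb^N(E)<\infty$ (the relevant case in Section~\ref{compact_region}).

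First I would apply Theorem~\ref{palm1} with intensity $\lambda\one_{E_n}$ and the indicator $h=h_{j,r,d_{x_0}}$ (recalling that a $j$-simplex has $j+1$ vertices) to get
\[
\Ex[S_j(\cP(\lambda)|_{E_n}, r, d_{x_0})] = \frac{\lambda^{j+1}}{(j+1)!}\int_{E_n^{j+1}} h_{j,r,d_{x_0}}(u_0,u_1,\dots,u_j)\,du_0\cdots du_j .
\]
Using translation invariance of $d_{x_0}$, I substitute $y_i=u_i-u_0$ and apply Tonelli's theorem to obtain
\[
\Ex[S_j(\cP(\lambda)|_{E_n}, r, d_{x_0})] = \frac{\lambda^{j+1}}{(j+1)!}\int_{(\R^N)^j} h_{j,r,d_{x_0}}(0,\y)\,Leb^N\Big(E_n\cap\bigcap_{i=1}^j(E_n-y_i)\Big)\,d\y ,
\]
where the integrand is supported in the bounded set $B(0,2r/c)^j$.

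The crux is controlling the intersection measure after dividing by $Leb^N(E_n)$. Writing $E_n=n^{1/N}E$ and $\varepsilon_n=n^{-1/N}$, the identity $E_n-y_i=n^{1/N}(E-\varepsilon_n y_i)$ gives
\[
\frac{1}{Leb^N(E_n)}Leb^N\Big(E_n\cap\bigcap_{i=1}^j(E_n-y_i)\Big) = \frac{Leb^N\big(E\cap\bigcap_{i=1}^j(E-\varepsilon_n y_i)\big)}{Leb^N(E)} .
\]
For each fixed $\y$ this tends to $1$: since $\varepsilon_n\to0$, the $L^1$-continuity of translations (valid because $\one_E\in L^1$) yields $Leb^N\big(E\,\triangle\,(E-\varepsilon_n y_i)\big)\to0$ for each $i$, whence $Leb^N\big(E\setminus\bigcap_i(E-\varepsilon_n y_i)\big)\to0$. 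I would stress that this $L^1$ argument is exactly what is needed for an \emph{arbitrary} Borel $E$: a naive ``erosion'' argument would fail for sets with empty interior but positive measure (fat Cantor sets), whereas continuity of translation requires no regularity of $\partial E$. As the ratio is bounded by $1$ and $h_{j,r,d_{x_0}}(0,\cdot)$ is integrable with bounded support, dominated convergence gives
\[
\frac{\Ex[S_j(\cP(\lambda)|_{E_n}, r, d_{x_0})]}{Leb^N(E_n)} \to \frac{\lambda^{j+1}}{(j+1)!}\int_{(\R^N)^j} h_{j,r,d_{x_0}}(0,\y)\,d\y .
\]

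It remains to evaluate this integral by a routine change of variables. Setting $w_i=\B_{x_0}y_i$ (Jacobian $D(x_0)$ per coordinate) turns the $d_{x_0}$-\v{C}ech condition into the Euclidean one, and then $w_i=r x_i$ converts radius $r$ to radius $1$, so that
\[
\int_{(\R^N)^j} h_{j,r,d_{x_0}}(0,\y)\,d\y = \frac{r^{Nj}}{D(x_0)^j}\int_{(\R^N)^j} h_j(0,\x)\,d\x = \frac{(j+1)!}{D(x_0)^j}A_j^{(N)}(r).
\]
Substituting back and recalling $\hat S_j^{(N)}(\mu,r)=A_j^{(N)}(r)\mu^{j+1}$ yields the limit $\lambda^{j+1}D(x_0)^{-j}A_j^{(N)}(r)=\hat S_j^{(N)}(\lambda/D(x_0),r)D(x_0)$, as claimed. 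The main obstacle in the whole argument is precisely the bulk-versus-boundary separation in the two middle steps; once phrased through Tonelli and $L^1$-continuity of translations it becomes painless, and it is the only place where the hypotheses $Leb^N(E)>0$ (and finiteness) and the growth $E_n=n^{1/N}E$ genuinely enter.
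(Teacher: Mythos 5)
Your proof is correct, but it takes a genuinely different route from the paper's. The paper proves this corollary by pure reduction: it applies Corollary~\ref{corollary} (the thermodynamic restatement of Proposition~\ref{general_simplicial}) with $f \equiv \lambda$, $\cA = E$ and $r_n = r/n^{1/N}$, and then uses the scaling map $x \mapsto n^{1/N}x$ --- under which $\cP_n$ with intensity $n\lambda\one_E$ becomes $\cP(\lambda)|_{E_n}$ and, by homogeneity of $d_{x_0}$, radius $r_n$ becomes radius $r$ --- to convert that statement into the one claimed. You instead recompute everything from scratch: Palm's formula (Theorem~\ref{palm1}), then translation invariance of $d_{x_0}$ to reduce the expectation to an integral of $h_{j,r,d_{x_0}}(0,\cdot)$ against the intersection volumes $Leb^N\big(E_n \cap \bigcap_i (E_n - y_i)\big)$, then $L^1$-continuity of translations plus dominated convergence, and finally the linear change of variables $w = \B_{x_0}y$. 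Both arguments have Palm's theorem at their core, but the treatment of the boundary effect differs in substance: the paper's route, via Lemma~\ref{lem: ex_conv_sim}, works with interior points and the hypothesis $Leb^N(\partial \cA) = 0$, which the corollary's statement (arbitrary Borel $E$ of positive measure) does not actually grant --- harmless in the paper's applications, where $E$ is a cube, but a real restriction if one insists on the statement as written. Your translation-continuity argument closes exactly that gap, requiring only $0 < Leb^N(E) < \infty$, and you correctly flag the finiteness assumption implicit in Palm's theorem. The trade-off is that your computation leans entirely on translation invariance and homogeneity of $d_{x_0}$, so it is specific to this homogeneous situation, whereas the paper's reduction recycles machinery (Proposition~\ref{general_simplicial}) built for general non-homogeneous metrics and densities; given that machinery, the paper's proof is two lines, while yours is self-contained and strictly more general in $E$.
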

\begin{proof}
Take $f(x) = \lambda$, $\cA = E$ and $r_n = r/n^{1/N}$ in Corollary \ref{corollary}, we get 
\begin{equation} \label{eq1hom}
       \lim_{n \to \infty} \frac{ \Ex[S_j (\cP_{n}, r_n, d_{x_0})] } {n}= A_j^{(N)}(r) Leb^N(E) \frac{\lambda^{j+1}}{ D(x_0)^j}.
\end{equation}
Consider the map $x \to n^{1/N}x$ with $n^{1/N}r_n = r \in (0, \infty)$ on the set $E$. By the scaling property of Poisson point processes, we have  
\begin{equation}\label{eq2hom}
            \Ex[S_j (\cP_{n}, r_n, d_{x_0})] = \Ex[S_j (\cP(\lambda)|_{E_n}, r, d_{x_0})].
\end{equation}
Substituting the equation \eqref{eq2hom} into \eqref{eq1hom} yields
\[
 \lim_{n \to \infty}  \frac{\Ex[S_j (\cP(\lambda)|_{E_n}, r, d_{x_0})]} {Leb^N(E_n)} =  A_j^{(N)}(r)\frac{\lambda^{j+1}}{D(x_0)^j},
\]
which completes the proof.
\end{proof}

Let $\cY_1, \cY_2, \ldots, \cY_k \subset \cP_n$ with $|\cY_i| =j+1, (1\leq i\leq k)$. 
Note that  Theorem~\ref{palm2} can be applied only when the sets $\{\cY_i\}_{i = 1}^k$ are disjoint. In order to calculate the fourth moment of $S_j(\cP_n, r_n, \rho)$, we need to deal with the non-disjoint case.
Let us consider the case when $k=2$, and leave to the reader the straightforward generalization to higher values of $k$.

\begin{lemma}\label{second order}
Assume that $\int_{\cA} f(x)^{2j+1} dx< +\infty$ and  $\lim _{n \to \infty} n^{1/N} r_n = r \in (0, \infty)$.  Then for any $1 \le l \le j + 1$, 
\[
   \lim_{n \to \infty} \frac{1}{n} \Ex \bigg[\sum_{\cY_1 \subset \cP_n} \sum _{\cY_2 \subset \cP_n}   h_{j,r_n,\rho}{(\cY_1)} h_{j,r_n,\rho}{(\cY_2)} \one_{\{|\cY_1 \cap \cY_2| = l\}}\bigg]  = const.
\]
\end{lemma}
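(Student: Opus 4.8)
The plan is to follow the same template as the proof of Proposition~\ref{general_simplicial}: rewrite the expectation as a Lebesgue integral by a Palm-type formula, isolate the scaling that makes it of order $n$, and then pass to the limit by the Dominated Convergence Theorem, controlling the integrand by the Hardy--Littlewood maximal function and identifying its pointwise limit through the local approximation of $\rho$ by $d_{x_0}$. Since the statement only asserts that the limit is a finite constant, I need not evaluate it explicitly.

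First I would reduce the double sum to an integral. A $j$-simplex has $j+1$ vertices, so writing $a = j+1-l$, a pair $(\cY_1,\cY_2)$ with $|\cY_1 \cap \cY_2| = l$ is described by $l$ shared points, $a$ points lying only in $\cY_1$, and $a$ points lying only in $\cY_2$: a total of $2j+2-l$ distinct points. Organizing the double sum by the union $\cY_1 \cup \cY_2$ and applying Theorem~\ref{palm1} to the resulting function on $(2j+2-l)$-point sets (equivalently, the multivariate Mecke formula), the expectation becomes, up to a combinatorial constant,
\[
n^{\,2j+2-l}\int h_{j,r_n,\rho}(u,v)\, h_{j,r_n,\rho}(u,w)\,\prod_{i=1}^{l} f(u_i)\prod_{i=1}^{a} f(v_i)\prod_{i=1}^{a} f(w_i)\, du\, dv\, dw,
\]
where $u=(u_1,\dots,u_l)$ are the shared points and $v,w\in(\R^N)^a$ the exclusive ones. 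Note that Theorem~\ref{palm2} does not apply directly, since $\cY_1,\cY_2$ are not disjoint; passing to the union set is precisely what removes that obstruction. Because $l\ge 1$ the two simplices share a point, so the whole configuration lies within $O(r_n)$ of $u_1$. Fixing $u_1=x_0$ and substituting $r_n$-rescaled variables for the remaining $2j+1-l$ points turns the integral into $n^{2j+2-l}\,r_n^{N(2j+1-l)}$ times a bounded quantity, i.e.\ $n\,(n^{1/N}r_n)^{N(2j+1-l)}$ times a bounded quantity, which is of order $n$. Hence $n^{-1}\Ex[\cdots]$ takes the form $\int_{\R^N}F_n(x_0)\,dx_0$, to which I would apply the DCT.

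The domination step mirrors Lemma~\ref{lem: ex_conv_sim2}. Using (P2) to replace $B_\rho(x_0,\cdot)$ by a Euclidean ball and bounding each of the $2j+1-l$ integrated densities by a ball average, $F_n(x_0)$ is dominated by $C\, f(x_0)\,(Mf)(x_0)^{2j+1-l}$, where $(Mf)$ is the Hardy--Littlewood maximal function. By H\"{o}lder's inequality with exponent $p=2j+2-l$ this is integrable provided $f\in L^{2j+2-l}(\R^N)$; since $1\le l\le j+1$ we have $j+1\le 2j+2-l\le 2j+1$, and the hypothesis $\int_\cA f^{2j+1}\,dx<+\infty$ together with $f\in L^1$ yields $f\in L^{2j+2-l}$ by interpolation. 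For the pointwise limit I would localize via (P1), replacing $\rho$ by $d_{x_0}$ on a small ball $B(x_0,\delta)$ exactly as in Lemma~\ref{lem: ex_conv_sim}, and then invoke the Lebesgue differentiation theorem at Lebesgue points of $f$ to replace the $2j+1-l$ averaged densities by $f(x_0)^{2j+1-l}$. This produces a finite pointwise limit proportional to $f(x_0)^{2j+2-l}/D(x_0)^{2j+1-l}$; integrating it in $x_0$ (finite by the same maximal-function bound) gives the constant.

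The genuinely new difficulty compared with Proposition~\ref{general_simplicial} is twofold. One part is handling the non-disjoint case, which is resolved cleanly by passing to the union set and the multivariate Mecke formula rather than Theorem~\ref{palm2}. The more delicate part, and the one I expect to be the main obstacle, is matching the integrability exponent: after fixing one shared point one must dominate the remaining $2j+1-l$ densities by the maximal function, and the resulting bound $f\,(Mf)^{2j+1-l}$ is integrable under exactly the hypothesis $f\in L^{2j+1}$, which is tight in the extreme case $l=1$ (there $2j$ maximal-function factors appear against a single $f$ factor).
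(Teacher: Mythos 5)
Your proposal is correct and is essentially the paper's own proof: the paper likewise reduces the non-disjoint double sum to a single sum over the union sets $\cZ=\cY_1\cup\cY_2$ (of cardinality $2j+2-l$), applies Theorem~\ref{palm1} to the resulting set function, and then reruns the maximal-function domination and Lebesgue-differentiation arguments of Proposition~\ref{general_simplicial}, exactly as you describe. The paper states this reduction in two lines and leaves the remaining verification (including the tight $L^{2j+1}$ integrability in the case $l=1$) implicit, which is precisely what you carried out.
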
 
\begin{proof}
For each finite subset $\cZ \subset \R^N$, let 
\[
   h'_{j,r_n,\rho}(\cZ) = \one_{\{ |\cZ| = 2j+2-l\}}  \sum_{\cY \subset \cZ} \sum _{\cY' \subset \cZ} h_{j,r_n,\rho} (\cY)h_{j,r_n,\rho}(\cY') \one_{\{\cY \cup \cY' = \cZ\}}. 
\]
Then 
\[
\Ex \bigg[\sum_{\cY_1 \subset \cP_n} \sum _{\cY_2 \subset \cP_n}   h_{j,r_n,\rho}{(\cY_1)} h_{j,r_n,\rho}{(\cY_2)} \one_{\{|\cY_1 \cap \cY_2| = l\}}\bigg] = 
\Ex \bigg[\sum_{\cZ\subset \cP_n} h'_{j,r_n,\rho} (\cZ)\bigg],
\]
from which all the arguments in the proof of Proposition~\ref{general_simplicial} can work here to show the desired result.
\end{proof}

We are now in the position to state and prove the strong law of large numbers for simplex counts.

\begin{proposition}\label{thm: 4-order}
Assume that  $\int_{\cA} f(x)^{4j+1} dx< +\infty$ and $\lim_{n \to \infty}n^{1/N}r_n = r \in (0, \infty)$. Then as $n \to \infty$, 
\[
 \frac{S_j(\cP_n, r_n, \rho)} {n} \to A_j^{(N)}(r) \int_{\cA} \frac{f(x)^{j+1}}{D(x)^j} dx ~ \text{\rm{a.s.}}
\]
\end{proposition}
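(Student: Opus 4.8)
The plan is to reduce the almost sure convergence to a fourth-moment estimate combined with the Borel--Cantelli lemma. By Corollary~\ref{corollary} we already know that $n^{-1}\Ex[S_j(\cP_n, r_n, \rho)] \to A_j^{(N)}(r)\int_\cA f(x)^{j+1}/D(x)^j\,dx$ as $n \to \infty$. Writing $\bar S_n := S_j(\cP_n, r_n, \rho) - \Ex[S_j(\cP_n, r_n, \rho)]$, it therefore suffices to prove that $\bar S_n / n \to 0$ almost surely. First I would establish the quantitative bound
\[
\Ex[\bar S_n^4] = O(n^2) \quad \text{as } n \to \infty.
\]
Granting this, Markov's inequality applied to $(\bar S_n/n)^4$ gives $\Prob(|\bar S_n|/n > \varepsilon) \le \varepsilon^{-4}\,n^{-4}\,\Ex[\bar S_n^4] = O(n^{-2})$ for every $\varepsilon > 0$, which is summable over $n \in \N$. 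The Borel--Cantelli lemma then yields $\bar S_n/n \to 0$ almost surely, and combining with the convergence of the means completes the proof.

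The heart of the matter is thus the fourth central moment bound. Expanding $\bar S_n^4$ turns $\Ex[\bar S_n^4]$ into a sum, over ordered quadruples $(\cY_1, \cY_2, \cY_3, \cY_4)$ of $(j+1)$-element subsets of $\cP_n$, of $\Ex[\prod_i h_{j,r_n,\rho}(\cY_i)]$ together with the correction terms produced by the centering. I would organize these quadruples according to the partition $\pi$ of $\{1,2,3,4\}$ induced by the connected components of the \emph{overlap graph}, in which $i \sim i'$ whenever $\cY_i \cap \cY_{i'} \neq \emptyset$. Across distinct blocks the index sets are disjoint, so Theorem~\ref{palm2} factorizes the expectation as a product over blocks; within a block the index sets form a single connected cluster, whose expected contribution is estimated exactly as in Proposition~\ref{general_simplicial} and Lemma~\ref{second order} (this is the ``straightforward generalization to higher values of $k$'' referred to after Lemma~\ref{second order}). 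A connected cluster spanning $m$ distinct points contributes on the order of $n^{m} r_n^{N(m-1)} = O(n)$, because connectivity forces a diameter of order $r_n$ and hence a spatial integral of order $r_n^{N(m-1)}$, while $n^{1/N} r_n \to r$; the power of $f$ in the corresponding Palm integral is at most $m \le 4j+1$ (attained when four simplices meet pairwise in $4j+1$ distinct points), so the assumption $\int_\cA f(x)^{4j+1}\,dx < \infty$ guarantees every such integral is finite and uniformly bounded in $n$.

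The decisive cancellation is that the centering removes every partition $\pi$ containing a singleton block. This already appears in the elementary identity $\Ex[\bar S_n^2] = \Ex\big[\sum_{\cY_1 \cap \cY_2 \ne \emptyset} h_{j,r_n,\rho}(\cY_1)h_{j,r_n,\rho}(\cY_2)\big]$, where the disjoint part of $\Ex[S_n^2]$ equals $(\Ex S_n)^2$ by Theorem~\ref{palm2} and is cancelled by the $-(\Ex S_n)^2$ coming from $\Ex[\bar S_n^2]=\Ex[S_n^2]-(\Ex S_n)^2$. In the same way, the binomial expansion of $(S_n - \Ex S_n)^4$ together with the block factorization of disjoint clusters annihilates all contributions in which some copy sits in its own cluster. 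Consequently only the no-singleton partitions of $\{1,2,3,4\}$ survive, namely the single block $\{1,2,3,4\}$ (contributing $O(n)$) and the three pairings $\{12\}\{34\}$, $\{13\}\{24\}$, $\{14\}\{23\}$ (each a product of two disjoint connected $2$-clusters, hence $O(n)\cdot O(n) = O(n^2)$). Summing gives $\Ex[\bar S_n^4] = O(n^2)$, as required.

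I expect the main obstacle to be the careful bookkeeping in this last step: verifying rigorously that the centering cancels precisely the singleton-containing partitions, so that no term of order $n^4$ or $n^3$ survives, and checking that each connected-cluster integral is dominated uniformly in $n$ by an integrable function. The latter is exactly where the full strength of the hypothesis $f \in L^{4j+1}$, rather than merely $f \in L^{j+1}$, is consumed. Both points are mild generalizations of arguments already carried out for the mean in Proposition~\ref{general_simplicial} and for the second moment in Lemma~\ref{second order}, so the difficulty is organizational rather than conceptual.
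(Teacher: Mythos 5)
Your proposal is correct and follows essentially the same route as the paper: a fourth central moment bound $\Ex[(S_j-\Ex S_j)^4]=O(n^2)$ obtained from the Palm theorems (Theorem~\ref{palm2} for disjoint clusters, Lemma~\ref{second order} and its generalizations for connected ones), then Markov's inequality and Borel--Cantelli, combined with the convergence of means from Corollary~\ref{corollary}. Your organization of the cancellation via overlap-graph partitions without singleton blocks (so that only the full $4$-cluster and the three pairings survive, giving $O(n)+3\cdot O(n^2)$) is a slightly cleaner, cumulant-style packaging of exactly the coefficient cancellation the paper verifies term by term in its binomial expansion.
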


This section is concluded with the proofs of Proposition~\ref{thm: 4-order} and the statement (b) of Lemma~\ref{conversion_poisson_binomial}.
In both the proofs, we show that the fourth moments of the appropriate quantities are of $O(n^\delta)$, where $0 \leq \delta < 3$ and $O(\cdot)$ is Bachmann--Landau big-$O$ notation. This is a sufficient condition for the strong laws. 
 For instance, let $\xi_n = (S_j(\cP_n, r_n, \rho) - \Ex[S_j(\cP_n, r_n, \rho)])$. We shall show that   
 $\Ex[\xi_n^4] \leq Kn^2$, for some constant $K$. Then by Markov's inequality, $\Prob(|\xi_n| \geq n\varepsilon)  \leq K n^{-2}\varepsilon^{-4}$. Since 
 $\sum n^{-2} < \infty$, by the first Borel--Cantelli lemma, $\Prob(\limsup_n |n^{-1} \xi_n| \geq \varepsilon) =0$. 
 This means $n^{-1} \xi_n$ converges to zero 
 almost surely. The strong law for $n^{-1}S_j(\cP_n, r_n, \rho)$ in the thermodynamic regime then follows from the convergence of expectation, which has been established in Corollary~\ref{corollary}. Now let us get into more detail on the proof of Proposition~\ref{thm: 4-order}.

\begin{proof}[{Proof of Proposition~{\rm\ref{thm: 4-order}}}]
The presentation of this proof is similar to some parts of the proof of Theorem~$3.9$ in \cite{Penrose-book}. 
To ease notation, we write $S_j$ for $S_j(\cP_n, r_n, \rho)$ and $h'_j$ for $h_{j,r_n,\rho}$. Using the binomial expansion, we have
\begin{align}
\label{eqn: binomial}
   \Ex [(S_j - \Ex[S_j])^4] = \sum_{k=0}^{4} {4 \choose k} (- \Ex[S_j])^{4-k}  \Ex[S_j ^ k], 
\end{align}
where 
\begin{equation}
\label{eqn:sum_k}
  \Ex[S_j ^k] = \Ex \left[\sum_{\cY_1 \subset \cP_n} \sum _{\cY_2 \subset \cP_n} \cdots \sum_{\cY_k \subset \cP_n}  h'_j{(\cY_1)} h'_j{(\cY_2)} \cdots h'_j{(\cY_k)}\right]. 
\end{equation}
Note that $\Ex[S_j ^k] = O(n^k)$. So at the first look it seems that the equation
  \eqref{eqn: binomial} is of $O(n^4)$. 
 However, this order is not exact. To calculate the exact order, we express $ \Ex [S_j^k]$ in \eqref{eqn:sum_k} as a sum of finitely many terms according to the intersection of $\{\cY_i\}_{i = 1}^k$ and analyze the order of each term. For instance, when $k = 2$,  there are $(j + 2)$ terms corresponding to the conditions $|\cY_1 \cap \cY_2| = l, (l=0, \dots, j+1)$. The term corresponding to $l = 0$ coincides with $\Ex[S_j]^2$ by Theorem~\ref{palm2}, and hence, has order $n^2$, while the other terms have order $n$ by Lemma~\ref{second order}. In general, each term in the expression~\eqref{eqn:sum_k} has order $n^{k'}$, where $k' \le k$ is the number of disjoint components in $\{\cY_i\}_{i = 1}^k$.

 The only term in \eqref{eqn:sum_k} that gives  an order $n^k$ comes from the case where all $\cY_1, \cY_2, \ldots, \cY_k$ are disjoint. By Theorem~\ref{palm2}, this term is equal to $\Ex[S_j]^k$. Putting back this contribution for each $k$ in \eqref{eqn: binomial}, we see that the coefficient of the leading order  term, the term of order $n^4$, is zero.

Now consider terms of order $n^{k-1}$ in \eqref{eqn:sum_k}. They should come from ordered 
$k$-subsets $\cY_1, \cY_2, \ldots, \cY_k$ when two of them have $1 \leq l \leq j+1$ points in common 
and the remaining subsets have neither any point in common with each other nor 
with the two subsets. Clearly, in this case there is no contribution when $k=0$ or $k=1$.
For fixed $l$ and $2\leq k\leq 4$, this contribution, denoted by $T^{k,l}$, is 
\begin{align*}
T^{k,l} &=  {k \choose 2}\Ex \bigg[\sum_{\cY_1 \subset \cP_n} \sum_{\cY_2 \subset \cP_n} h'_j{(\cY_1)} h'_j{(\cY_2)} \one_{\{|\cY_1\cap \cY_2| = l\}}\bigg]
\prod_{s=3}^{k}\Ex \bigg[\sum_{\cY_s \subset \cP_n}h'_j{(\cY_s)}\bigg]\\
&= {k \choose 2}\Ex \bigg[\sum_{\cY_1 \subset \cP_n} \sum_{\cY_2 \subset \cP_n} h'_j{(\cY_1)} h'_j{(\cY_2)} \one_{\{|\cY_1\cap \cY_2| = l\}}\bigg] \Ex[S_j]^{k-2}.
\end{align*}
Putting back the contribution $T^{k,l}$ for $k \geq 2$ in \eqref{eqn: binomial} again makes all terms of order $n^3$ disappear. Thus, we deduce that $\Ex[(S_j - \Ex[S_j])^4] \le K n^2$, for some constant $K$. The proof is complete.
\end{proof}

\begin{remark}\label{euclid_simp_hom}
  Similarly, under the setting of Corollary \ref{corhom}, as $n \to \infty$,
 \begin{equation*}
  \frac{S_j (\cP(\gamma)|_{E_n}, r, d_{x_0})} {Leb^N(E_n)} \to \hat S_j^{(N)} \left(\frac{\gamma}{D(x_0)}, r\right) D(x_0) ~\text{a.s.}
 \end{equation*} 
\end{remark}

We now present the proof of the statement (b) of Lemma~\ref{conversion_poisson_binomial}. The statement (a) is for the manifolds, so its proof is discussed in 
Section \ref{simplex-manifold}.
\begin{proof}[{Proof of Lemma \rm\ref{conversion_poisson_binomial} \rm{(b)}}]
For any $m$, let $S_j(m, n) = |S_j(\fX_m, r_n, \rho) - S_j (\fX_n, r_n, \rho) |$.
We first bound the probability of the event $\{X_1 \in B_{\rho}(x, r_n) \cap \cA\}$. Clearly,
\begin{equation}\label{bound_prob}
      \Prob(X_1 \in B_{\rho}(x, r_n)\cap \cA)  = \int_{B_{\rho}(x, r_n)\cap \cA} f(x) dx.
 \end{equation}
Applying H\"{o}lder's inequality, we obtain that 
\[
      \Prob(X_1 \in B_{\rho}(x, r_n)\cap \cA) \leq \left(\int_{\R^N} f(x)^p dx\right)^{1/p}  \left(Leb^N(B_{\rho}(x, r_n)\cap \cA)\right)^{1/q}, 
\]
where $p = 4j+1$ and $q = (4j+1)/4j$.
Since $\int_{\R^N} f(x)^{4j+1} dx < \infty$, $\rho$ satisfies (P$2$) and $r_n \to 0$, it follows that in the thermodynamic regime $\Prob(X_1 \in B_{\rho}(x, r_n)\cap \cA)$ 
is bounded by $c_1 n^{-1/q}$ for some constant $c_1$.

For $m > n \ge j$, since each $j$-simplex in $\cC(\fX_m, r_n, \rho) \setminus \cC(\fX_n, r_n, \rho)$ must contain at least one vertex in $\{X_{n + 1}, \dots, X_m\}$, we have 
\[
	S_j(m,n) \le \sum_{i = n + 1}^m \xi(X_i, \fX_m),
\]
where $\xi(X_i, \fX_m)$ counts the number of $j$-simplices with one vertex $X_i$ in $\cC(\fX_m, r_n, \rho)$. It follows that 
\begin{equation}\label{4th-moment}
	\Ex[S_j(m,n)^4] \le (m-n)^3 \sum_{i = n+1}^m \Ex[\xi(X_i, \fX_m)^4] = (m-n)^4 \Ex[\xi(X_1, \fX_m)^4].
\end{equation}
Note that $\xi(X_1, \fX_m)$ can be written as follows
\begin{equation*}
	\xi(X_1, \fX_m) = \sum_{\{i_1, \dots, i_j\}} h_{j,r_n, \rho}(X_1, X_{i_i}, \dots, X_{i_j}) =:\sum_{{\bf i} = \{i_1, \dots, i_j\}} \eta_{\bf i},
\end{equation*}
where $\{i_1, \dots, i_j\}$ is a subset of $\{2,3, \ldots, m\}$.
We estimate the fourth moment of $\xi(X_1, \fX_m)$,
\begin{equation}\label{4th-moment-sum}
	\Ex[\xi(X_1, \fX_m)^4] = \sum_{\bf i, \bf j, \bf k, \bf l} \Ex[\eta_{\bf i} \eta_{\bf j} \eta_{\bf k} \eta_{\bf l}].
\end{equation}
Let $wt({\bf i, j, k, l}) =|\{{\bf i \cup j \cup k \cup l}\}|$. Then 
\[
	\Ex[\eta_{\bf i} \eta_{\bf j} \eta_{\bf k} \eta_{\bf l}] \le \Prob(X_i \in B_{\rho} (X_1, r_n) \cap \cA\colon i \in   {\bf i \cup j \cup k \cup l} ) \le   \left( \frac{c_1}{n^{1/q}} \right)^{wt({\bf i, j, k, l})}.
\]
Note that $j \le wt({\bf i, j, k, l})  \le 4j$ and given $wt({\bf i, j, k, l}) = w$, the number of terms in the sum~\eqref{4th-moment-sum} is bounded by  
\[
	\binom{m - 1}{w} \binom{w}{j}^4 \le c(w, j) m^w,
\]
where $c(w, j)$ is a constant depending on $w$ and $j$.
Therefore, the sum~\eqref{4th-moment-sum} is bounded by 
\[
	\Ex[\xi(X_1, \fX_m)^4] \le c \left( \frac{m}{n^{1/q}}\right)^{4j},
\]
where $c$ is a constant which  does not depend on $m$ and $n$. Combining the formula for the fourth moment~\eqref{4th-moment} and the above estimate, we have
\[
	\Ex[S_j(m,n)^4] \le c  (m - n)^4 \left( \frac{m}{n^{1/q}}\right)^{4j} .
\]

When $j \le m < n$, we change the role of $m$ and $n$ to get 
\[
	\Ex[S_j(m, n)^4] \le c (n - m)^4 n^{4j/p} =  c (n - m)^4 n^{\delta},
\]
where $\delta = 4j/(4j + 1) \in (0,1)$.
Combining two estimates, we have 
\[
	\Ex[S_j(m, n)^4] \le c  (m - n)^4 \left[n^\delta + \left( \frac{m}{n^{1/q}} \right)^{4j} \right].
\]
Therefore,
\begin{align*}
\Ex \left[\left|S_j( \cP_n, r_n, \rho) - S_j (\fX_n, r_n, \rho) \right|^4 \right]  &\le c \Ex \left[ (N_n - n)^4 \left( n^\delta + \frac{(N_n)^{4j}}{n^{4j/q}} \right) \right] \\
	&\le c \Ex[(N_n - n)^8]^{1/2} \Ex \bigg[ \left( n^\delta + \frac{(N_n)^{4j}}{n^{4j/q}} \right)^2 \bigg]^{1/2}.
\end{align*}
Here in the last inequality, we have used H\"{o}lder's inequality. Note that $\Ex[(N_n)^j]$ is a polynomial in $n$ of degree $j$. 
Thus the second factor in the above estimate is of $O(n^\delta )$. 
It is easy to check that $\Ex[(N_n - n)^8]$ is a polynomial of degree $4$ in $n$. Therefore, 
\[
        \Ex[(S_j( \cP_n, r_n, \rho) - S_j(\fX_n, r_n, \rho) )^4] = O(n^{2+\delta}).
\]
This completes the proof.
\end{proof}

\section{Betti numbers in the Euclidean setting}\label{betti_numbers}
We give the proof of Theorem~\ref{thm:euclidb} (for Poisson point processes) in this section. We first deal with the case where $\cA$ is compact and the function $f(x)$ is bounded. Theorem~\ref{thm:euclidb} then follows with the help of the strong law of large numbers for simplex counts and Lemma~\ref{lem:Betti-estimate}.

\subsection{Betti numbers in a compact region}\label{compact_region}
We prove the following law of large numbers. 

\begin{proposition}\label{pro:S,bdd}
Let $(\cA, \rho)$ be a metric space, where $\cA$ is a compact subset of $\R^N$ with $Leb^N(\partial \cA) = 0$ and the metric $\rho$ satisfies the property \emph{(P1)}.
Assume that $f(x)$ is a non negative function on $\cA$ and is bounded. 
Then as $n \to \infty$ with $n^{1/N}r_n \to r \in (0, \infty)$, 
\[
         \frac{\beta_k(\cP_n, r_n, \rho)}{n}  \to \int_{\cA} \hat {\beta}_k^{(N)} \left( \frac{f(x)}{D(x)}, r \right) D(x) dx ~ ~\text{\rm{a.s.}}
\]
Here, $\cP_n$ is a Poisson point process on $\cA$ with intensity function $n f(x)$.
\end{proposition}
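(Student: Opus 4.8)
The plan is to reduce the statement to the known strong law for homogeneous Poisson point processes (the result of \cite{ysa} recorded just before Theorem~\ref{thm:manifoldb}) by a blocking argument, controlling every error through simplex counts via Lemma~\ref{lem:Betti-estimate}. First I would fix a mesh $a>0$ and partition $\R^N$ into half-open cubes of side $a$; since $\cA$ is compact, only finitely many cubes $Q_1,\dots,Q_M$ meet $\cA$. Writing $\cK_n=\cC(\cP_n,r_n,\rho)$ and letting $\tilde\cK_n$ be the subcomplex of simplices whose vertex set lies in a single cube, the pieces $\cC(\cP_n\cap Q_i,r_n,\rho)$ are vertex-disjoint, so Betti numbers are additive: $\beta_k(\tilde\cK_n)=\sum_i\beta_k(\cP_n\cap Q_i,r_n,\rho)$. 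Lemma~\ref{lem:Betti-estimate} then bounds $|\beta_k(\cK_n)-\beta_k(\tilde\cK_n)|$ by $\sum_{j=k}^{k+1}(S_j(\cK_n)-S_j(\tilde\cK_n))$, the number of boundary-crossing simplices. The crucial point is that this error is $o(n)$ almost surely for free: by Proposition~\ref{thm: 4-order} applied to $\cA$ and to each $Q_i\cap\cA$ (each has boundary of Lebesgue measure zero, since $Leb^N(\partial\cA)=0$ and $\partial Q_i$ is null), both $S_j(\cK_n)/n$ and $S_j(\tilde\cK_n)/n=\sum_iS_j(\cP_n\cap Q_i)/n$ converge a.s.\ to the \emph{same} limit $A_j^{(N)}(r)\int_\cA f^{j+1}/D^j$, because this limit is additive over the partition. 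Hence $n^{-1}(S_j(\cK_n)-S_j(\tilde\cK_n))\to0$ and the reduction to cubes costs nothing after dividing by $n$.

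Next, on each cube I would pass to a homogeneous model for both the metric and the intensity. Using property (P1) and compactness I choose, for a given $\varepsilon$, the mesh fine enough that on each $Q_i$ the metric is sandwiched, $(1-\varepsilon)d_{x_i}\le\rho\le(1+\varepsilon)d_{x_i}$, for a center $x_i$; the resulting change of radius is absorbed by the continuity of $\hat\beta_k^{(N)}$ in $r$ (property (ii)), again estimating the discrepancy by simplex counts through Lemma~\ref{lem:Betti-estimate} and Corollary~\ref{corhom}. For the intensity I would couple the inhomogeneous process $\cP_n\cap Q_i$ (intensity $nf$) with a homogeneous Poisson process of intensity $n\bar f_i$, where $\bar f_i=Leb^N(Q_i)^{-1}\int_{Q_i}f$ is the local average, using the standard superposition/thinning coupling whose symmetric difference is a Poisson process of intensity $n|f-\bar f_i|$. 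Lemma~\ref{lem:Betti-estimate} bounds the resulting Betti discrepancy by the number of simplices meeting the symmetric-difference points, which in the thermodynamic regime is $O(n\int_{Q_i}|f-\bar f_i|)$ in expectation, and a.s.\ by a fourth-moment estimate as in Proposition~\ref{thm: 4-order}. For the homogenized block I would invoke the homogeneous strong law (\cite{ysa}, in its routine extension from cubes to growing windows) transported through the linear map $\B_{x_i}$ exactly as in Corollary~\ref{corhom} and Remark~\ref{euclid_simp_hom}: since $\B_{x_i}$ turns $d_{x_i}$ into the Euclidean metric and scales volume by $D(x_i)$, one obtains $n^{-1}\beta_k(\cC(\cP(n\bar f_i)\cap Q_i,r_n,d_{x_i}))\to Leb^N(Q_i)\,D(x_i)\,\hat\beta_k^{(N)}(\bar f_i/D(x_i),r)$ a.s.

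Summing over $i$, for each fixed mesh the block contribution converges a.s.\ to the Riemann-type sum $\sum_iLeb^N(Q_i)D(x_i)\hat\beta_k^{(N)}(\bar f_i/D(x_i),r)$, while the accumulated error is at most a constant times $\int_\cA|f-f_a|$ plus an $\varepsilon$-term, where $f_a$ is the piecewise-constant local average. Letting the mesh tend to zero along $a=1/q$, the error vanishes because $f_a\to f$ in $L^1$ (the martingale, equivalently Lebesgue-differentiation, statement, valid for every $f\in L^1$ and requiring no regularity), and the Riemann sum converges to $\int_\cA\hat\beta_k^{(N)}(f/D,r)D$ by continuity of $\hat\beta_k^{(N)}$ and $D$, the $L^1$ convergence of the averages, and dominated convergence using the exponential-decay bound (iii) together with $f\in L^p$. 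A diagonal argument over the countably many meshes $a=1/q$ then supplies a single null set and yields the claimed a.s.\ convergence.

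The step I expect to be the main obstacle is the homogenization of the density. Because $f$ is only assumed bounded (not continuous or Riemann integrable), one cannot sandwich $f$ by its supremum and infimum on cubes, as the gap $\int(\sup_{Q_i}f-\inf_{Q_i}f)$ need not vanish; this is precisely the obstruction that forced the Riemann-integrability hypothesis in earlier work. The essential idea is to compare instead with the local average $\bar f_i$, converting the density error into the $L^1$-distance $\int|f-f_a|$, which does tend to zero for arbitrary integrable $f$, and to transfer this control to Betti numbers through Lemma~\ref{lem:Betti-estimate} and the coupling. A secondary technical point is maintaining almost-sure rather than merely expected control of all the simplex-count errors and carefully organizing the interchange of the limits $n\to\infty$ and $a\to0$.
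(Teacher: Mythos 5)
Your strategy is genuinely different from the paper's, and its core ideas are sound. The paper partitions $\cA$ into cubes whose \emph{rescaled} size stays fixed (volume $L$); the per-block Betti numbers then do not converge individually, so the paper proves an SLLN across the roughly $n/L$ independent blocks (Lemma~\ref{lem:almost_sure}), reducing everything to convergence of \emph{expectations}, which it establishes by dominated convergence at Lebesgue points via two couplings (Lemma~\ref{lem:1_estimate} for the intensity, Lemma~\ref{lem:2_estimate} for the metric), and finally lets $L\to\infty$ using only the expectation-level homogeneous limit (Lemma~\ref{lem:Ltoinfty}, which rests on \cite{Duy-2016}). You instead fix a physical mesh $a$, so each block is itself a growing window on which you invoke the full almost-sure homogeneous SLLN of \cite{ysa}, and you control the homogenization error by the $L^1$-distance $\int|f-f_a|$; this local-average trick is exactly the right way to avoid Riemann integrability, and your treatment of the patching error (both sides of the partition have the same simplex-count limit by Proposition~\ref{thm: 4-order}, since that limit is additive over the cubes) is correct and clean. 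What the paper's architecture buys is that it never needs more than expectation-level input on homogeneous processes and never samples $D$ at points; what yours buys is that it avoids the SLLN over blocks and the $n$-dependent partition bookkeeping.

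Two of your steps, however, are thinner than you acknowledge. First, your Riemann-sum limit invokes ``continuity of $D$'', but the hypotheses give only \emph{measurability} of $x\mapsto\B_x$; for a general measurable $D$ the sampled values $D(x_i)$ need not converge a.e.\ to $D$. Continuity of $D$ on $\cA^\circ$ is in fact derivable from (P1) (two norms that both $(1\pm\varepsilon)$-approximate $\rho$ on a common open patch of $\cA$ are $\frac{1+\varepsilon}{1-\varepsilon}$-equivalent everywhere, so their unit-ball volumes are close), but this must be proved, and it must also be used to relate $D$ at your sandwich centers --- which come from a finite cover and need not lie in the cubes they serve --- to $D$ on those cubes; the paper sidesteps all of this by keeping $D(x)$ inside integrals evaluated at Lebesgue points. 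Second, the homogeneous SLLN you quote is for cubical windows with a \emph{fixed} radius $r$, whereas your blocks, after transport by $\B_{x_i}$ and rescaling, are growing parallelepipeds with radius $n^{1/N}r_n\to r$ not identically equal to $r$; since Betti numbers are neither monotone nor continuous in the radius, this mismatch cannot be absorbed by property (ii) alone. Repairing it (e.g., rescaling by $r/r_n$ so the radius is exact, which perturbs the intensity by a factor $1+o(1)$ and then requires an almost-sure coupling estimate) needs the fourth-moment Borel--Cantelli machinery you only gesture at; similarly, each coupling error you bound in expectation must be upgraded to an almost-sure bound by a Palm-calculus fourth-moment computation in the style of Proposition~\ref{thm: 4-order}. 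A final small point: $\rho$-witnesses must lie in $\cA$ while norm-witnesses live in $\R^N$, so only one of your two sandwich inclusions between $\cC(\cdot,\cdot,\rho)$ and $\cC(\cdot,\cdot,d_{x_i})$ is automatic; the comparison should be organized one-sidedly, with Lemma~\ref{lem:Betti-estimate} applied to the inclusion that does hold. None of these gaps is fatal, but they are where the real work of the proof lies.
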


Note that since $\cA$ is assumed to be compact, the metric $\rho$ also satisfies the property (P2).

To obtain Proposition~\ref{pro:S,bdd}, we partition the set $\cA$ as follows. Let $\alpha_n = r/r_n$. For fixed $L >0$ and for each $n$, 
divide $\R^N$ according to the lattice $ (L/\alpha_n^N)^{1/N} \Z^N$. Let $\{C_{n,i}\}$ be the cubes of the form $(a,b]^N$ that intersect with $\cA$, 
where $a,b \in \R$ and $b-a = (L/\alpha_n^N)^{1/N}$. Let $\cA_n = \cup_i C_{n,i}$. 
Observe that although $\rho$ is defined only on 
$C_{n,i} \cap \cA$, the object $\cC(\cP_n|_{C_{n,i}}, r_n, \rho)$ 
is well-defined since $\supp f \subset \cA$.
 The limiting behavior of $\beta_k(\cP_n, r_n, \rho)$ will be estimated by that of $\beta_k\left( \cup_{i}\cC(\cP_n|_{C_{n,i}}, r_n, \rho)\right)$, where 
the latter is studied in the following way.

 Consider the map $x \mapsto \alpha_n x$ and let $W_{n, i}$ be the image of $C_{n, i}$. Define a metric on $\alpha_n \cA$ as 
 \[
	\rho_n (x,y):= \alpha_n \rho \left(x/\alpha_n, y/ \alpha_n\right).
\] 
Let $\tilde \cP_n= \alpha_n\cP_n$. Then $\tilde \cP_n$ is 
a Poisson point process on $\R^N$ with intensity function 
\[
 n/\alpha_n^N f(x/\alpha_n) =: f_n(x).
\]
Note that for every
realization of $\cP_n$ on $\cA$,  $\cC(\cP_n|_{C_{n,i}} , r_n, \rho) \cong \cC(\tilde \cP_n|_{W_{n,i}} , r, \rho_n)$. It then follows that for fixed $L$ and $n$, 
\begin{equation}\label{betti_part}
     \beta_k\left( \bigcup_{i}\cC(\cP_n|_{C_{n,i}}, r_n, \rho)\right) = \sum_{i} \beta_k (\tilde \cP_n|_{W_{n,i}} , r, \rho_n),
\end{equation}
\begin{equation}\label{simp_part}
      S_j\left( \bigcup_{i}\cC(\cP_n|_{C_{n,i}}, r_n, \rho)\right) = \sum_{i} S_j (\tilde \cP_n|_{W_{n,i}} , r, \rho_n),
\end{equation}
because of the disjoint union of simplicial complexes. Observe that the above sums are of independent random variables, from which the following strong laws of large numbers for $ \beta_k\left( \cup_{i}\cC(\cP_n|_{C_{n,i}}, r_n, \rho)\right)$ and $ S_j\left( \cup_{i}\cC(\cP_n|_{C_{n,i}}, r_n, \rho)\right)$ hold.

\begin{lemma} \label{lem:SLLN for partition}
For fixed $L > 0$,  
as $n \to \infty$, 
 \begin{align*}
      &\text{\rm{(a)}}~~ \frac{1}{n} \sum_i \beta_k(\tilde \cP_n|_{W_{n,i}} , r, \rho_n) \to  \int_{\cA} \frac {\Ex [\beta_k(\cP_{L}({f}(x)), r, d_{x})]} { L}  dx  ~\text{\rm{a.s.}}, \\
       &\text{\rm{(b)}}~~  \frac{1}{n} \sum_i S_j(\tilde \cP_n|_{W_{n,i}} , r, \rho_n) \to \int_{\cA} \frac {\Ex [S_j(\cP_{L}({f}(x)), r, d_{x})]} { L}  dx  ~\text{\rm{a.s.}}
    \end{align*}
    Here, $\cP_L(\lambda)$ is the restriction on $W_L=(-L/2)^{1/N}, (L/2)^{1/N}]^N$ of a homogeneous Poisson point process $\cP(\lambda)$ on $\R^N$ with intensity $\lambda \ge 0$.
  \end{lemma}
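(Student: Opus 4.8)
The plan is to read both statements as a strong law of large numbers for a triangular array of \emph{independent} summands and to reduce the almost sure convergence to a convergence of means. First I would record the structural facts that make this work. With $\alpha_n = r/r_n$, the hypothesis $n^{1/N}r_n\to r$ gives $\alpha_n^N/n\to 1$, and since $\cA$ is compact the number $M_n$ of cubes $C_{n,i}$ meeting $\cA$ is $\Theta(n)$. As $\tilde\cP_n = \alpha_n\cP_n$ is Poisson and the boxes $W_{n,i}$ are disjoint, the summands $Y_{n,i} := \beta_k(\tilde\cP_n|_{W_{n,i}}, r, \rho_n)$ (resp. $S_j(\tilde\cP_n|_{W_{n,i}}, r, \rho_n)$) are independent for fixed $n$. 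Moreover each is dominated by $\binom{P_i}{k+1}$ (resp. $\binom{P_i}{j+1}$), where $P_i$ counts the points of $\cP_n$ in $C_{n,i}$ and is Poisson with mean $n\int_{C_{n,i}}f \le M\,n\,Leb^N(C_{n,i}) = ML\,\alpha_n^{-N}n \to ML$; hence all moments of $Y_{n,i}$ are bounded uniformly in $n$ and $i$.

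Granting these uniformly bounded moments, I would establish the fluctuation estimate exactly as in the proof of Proposition~\ref{thm: 4-order}: putting $Z_{n,i}=Y_{n,i}-\Ex[Y_{n,i}]$ and using independence,
\[
\Ex\Big[\Big(\sum_i Z_{n,i}\Big)^4\Big] = \sum_i \Ex[Z_{n,i}^4] + 3\sum_{i\ne i'}\Ex[Z_{n,i}^2]\,\Ex[Z_{n,i'}^2] \le K M_n + 3(K M_n)^2 = O(n^2).
\]
Markov's inequality then gives $\Prob(|n^{-1}\sum_i Z_{n,i}|\ge\varepsilon) = O(n^{-2})$, which is summable, so by Borel--Cantelli $n^{-1}\sum_i Z_{n,i}\to 0$ almost surely. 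Thus both (a) and (b) follow once the means are shown to converge to the stated integrals.

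For the means I would localize and rescale. Using $\cC(\tilde\cP_n|_{W_{n,i}}, r, \rho_n)\cong \cC(\cP_n|_{C_{n,i}}, r_n, \rho)$, set $\psi_n(x)=\Ex[Y_{n,i}]$ for $x\in C_{n,i}$; then $n^{-1}\sum_i\Ex[Y_{n,i}] = (\alpha_n^N/n)\,L^{-1}\int_{\cA_n}\psi_n(x)\,dx$, and since $\alpha_n^N/n\to 1$, $Leb^N(\cA_n\setminus\cA)\to 0$, and $\psi_n$ is uniformly bounded on the compact set $\cA$, dominated convergence reduces everything to showing $\psi_n(x)\to G(x):=\Ex[\beta_k(\cP_L(f(x)), r, d_x)]$ (resp. the $S_j$ analogue) for almost every $x$, the exceptional set lying in $\partial\cA$ together with the non-Lebesgue points of $f$. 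Fix a Lebesgue point $x\in\cA^\circ$. On the box $W_{n,i(x)}$ the rescaled intensity $f_n(\cdot)=(n/\alpha_n^N)f(\cdot/\alpha_n)$ converges to the constant $f(x)$ in $L^1$, since $\int_{W_{n,i(x)}}|f_n(y)-f(x)|\,dy = L\,Leb^N(C_{n,i(x)})^{-1}\int_{C_{n,i(x)}}|f-f(x)|\,dy + o(1)\to 0$ by the Lebesgue differentiation theorem, i.e. the intensity measures converge in total variation; simultaneously (P1) and the homogeneity of $d_x$ give $\rho_n\to d_x$ uniformly on the relevant scale. For the simplex count (b) the per-cube expectation is the explicit Palm integral of Theorem~\ref{palm1}, and $\psi_n(x)\to G(x)$ follows by the dominated-convergence argument of Lemmas~\ref{lem: ex_conv_sim} and~\ref{lem: ex_conv_sim2}, the maximal function controlling the replacement of $\prod_l f(x_l)$ by $f(x)^{j+1}$ and (P1) controlling $\rho\mapsto d_x$, the only new feature being the constraint that all points lie in one box, which is exactly what yields the finite-$L$ box $W_L$ in the limit. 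For the Betti number (a), which has no such formula, I would instead couple $\tilde\cP_n|_{W_{n,i(x)}}$ with $\cP_L(f(x))$ so that the point sets coincide off an event of probability $o(1)$ (by the total-variation convergence above and translation invariance, identifying the congruent boxes); on the coincidence event the two Čech complexes agree for all large $n$, because a homogeneous Poisson configuration is almost surely in general position with no minimal enclosing radius equal to $r$, so the uniform convergence $\rho_n\to d_x$ cannot change the complex. Uniform integrability from the bounded factorial moments then upgrades this almost sure agreement to $\psi_n(x)\to G(x)$, and $G$ is integrable since $G(x)\le (f(x)L)^{k+1}/(k+1)!$ with $\int_\cA f^{k+1}<\infty$.

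The main obstacle is precisely this mean convergence for the Betti numbers in (a): unlike simplex counts, Betti numbers are not accessible through Palm calculus, so there is no transparent integral to evaluate. The argument must pass through convergence in distribution of the localized rescaled process to the homogeneous model $\cP_L(f(x))$, and because the density is only bounded and not continuous, this convergence has to be extracted from the $L^1$/total-variation convergence of intensities at Lebesgue points rather than from any pointwise limit of $f$. The delicate point is to combine that convergence with the metric perturbation $\rho_n\to d_x$ through the almost sure non-degeneracy of Čech complexes, and with the uniform integrability afforded by the bounded point counts, so as to conclude at the level of expectations.
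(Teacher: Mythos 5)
Your proposal follows essentially the same route as the paper's own proof: independence across the boxes plus uniformly bounded moments give the fourth-moment/Borel--Cantelli reduction to convergence of means (this is the paper's property (CP1) together with Lemma~\ref{lem:almost_sure}), and the means are then handled exactly as in the paper, via a per-cube expectation function and bounded convergence, a coupling of the rescaled process with the homogeneous Poisson process $\cP_L(f(x))$ at Lebesgue points (Lemma~\ref{lem:1_estimate}), and stability of the \v{C}ech complex under the uniform convergence $\rho_n \to d_x$ combined with uniform integrability (Lemmas~\ref{continuity_simp} and~\ref{lem:2_estimate}). The one fact you assert rather than prove---that almost surely no minimal enclosing radius of a homogeneous Poisson configuration equals exactly $r$, so the metric perturbation eventually cannot change the complex---is true but is precisely the content of the paper's Lemma~\ref{continuity_simp}, which derives it from the one-sided continuity in $r$ of expected simplex counts.
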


 Now letting $L \to \infty$, we obtain the following result. 
Recall that $\hat S_j^{(N)} \left({\lambda}/{D(x)}, r\right) = A_j^{(N)}(r)(\lambda/D(x))^{j+1}$ from Corollary~\ref{corhom}.

\begin{lemma} \label{lem:Ltoinfty}
As $L \to \infty$,
\begin{align*}
    &\text{\rm{(a)}}~~ \int_{\cA} \frac {\Ex [\beta_k(\cP_{L}({f}(x)), r, d_{x})]} { L}  dx \to \int_{\cA} \hat \beta_k^{(N)} \left(\frac{f(x)}{D(x)}, r \right) D(x) dx,\\
    &\text{\rm{(b)}}~~ \int_{\cA} \frac {\Ex [S_j(\cP_{L}({f}(x)), r, d_{x})]} { L}  dx \to \int_{\cA} \hat S_j^{(N)} \left( \frac{f(x)}{D(x)}, r\right) D(x) dx. 
\end{align*}
\end{lemma}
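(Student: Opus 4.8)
\noindent The plan is to treat both parts by the same two–step scheme: first establish convergence of the \emph{integrands} for a.e.\ $x \in \cA$, and then pass to the limit under the integral sign by the bounded convergence theorem, using that $\cA$ is compact and that the integrands are uniformly bounded. Since $Leb^N(\partial\cA)=0$, it suffices to argue for $x\in\cA^\circ$, where \eqref{prop_metric_d} and the uniform two–sided eigenvalue bound \eqref{min-max} on $\B_x$ are available; in particular $D(x)=\det(\B_x)$ is then bounded away from $0$ and $\infty$ on $\cA$.

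For part (b) the integrand–level convergence is immediate from Corollary~\ref{corhom}. Writing $W_1=(-\tfrac12,\tfrac12]^N$ we have $W_L=L^{1/N}W_1$ and $Leb^N(W_L)=L$, so Corollary~\ref{corhom} with $E=W_1$, $\lambda=f(x)$ and $n=L$ gives, for every $x\in\cA^\circ$,
\[
   \frac{\Ex[S_j(\cP_L(f(x)), r, d_x)]}{L} \to \hat S_j^{(N)}\!\Big(\frac{f(x)}{D(x)}, r\Big) D(x).
\]
A uniform bound on this integrand follows from the domination used in the proof of Lemma~\ref{lem: ex_conv_sim}: for $L$ large the per–volume simplex count is at most a constant times $f(x)^{j+1}$, where the constant depends only on $N$, $j$, and the uniform lower eigenvalue bound of $\B_x$ on $\cA$. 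Since $f$ is bounded and $\cA$ has finite measure, the integrands are uniformly bounded, so bounded convergence yields part (b).

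For part (a) the only genuinely new ingredient is the integrand–level convergence, i.e.\ the Betti analogue of Corollary~\ref{corhom}, namely $L^{-1}\Ex[\beta_k(\cP_L(f(x)), r, d_x)] \to \hat\beta_k^{(N)}(f(x)/D(x), r)\,D(x)$ for $x\in\cA^\circ$. To see this I would fix $x\in\cA^\circ$ and apply the linear map $T=\B_x$: since $d_x(y,z)=\|T(y-z)\|$, the \v{C}ech complexes $\cC(\cP_L(f(x)), r, d_x)$ and $\cC(T\cP_L(f(x)), r, \|\cdot\|)$ are isomorphic and hence have equal Betti numbers, while $T\cP_L(f(x))$ is a homogeneous Poisson point process of intensity $f(x)/D(x)$ on the parallelepiped $T W_L$, whose volume is $D(x)L$. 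The claim thus reduces to
\[
   \frac{\Ex[\beta_k(\cC(\Pi|_{TW_L}, r))]}{Leb^N(TW_L)} \to \hat\beta_k^{(N)}\!\Big(\frac{f(x)}{D(x)}, r\Big),
\]
where $\Pi$ is a homogeneous process of intensity $f(x)/D(x)$ with the \emph{standard} metric; that is, to the \textbf{shape independence} of the thermodynamic Betti limit (the defining cube limit being available, in expectation, by upgrading the a.s.\ statement of the introduction via $0\le\beta_k\le S_k$ and the convergent dominator $V^{-1}\Ex[S_k]\to\hat S_k^{(N)}$).

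The shape independence is the main obstacle, precisely because Betti numbers are not additive; this is where Lemma~\ref{lem:Betti-estimate} does the work. I would tile $\R^N$ by cubes of a fixed large side $s$, set $\cK=\cC(\Pi|_{TW_L}, r)$ and let $\tilde\cK$ be the subcomplex consisting of the simplices of $\cK$ whose vertices all lie in a common interior cube $Q\subset TW_L$, so that $\tilde\cK$ is the disjoint union of the $\cC(\Pi|_Q, r)$ and $\beta_k(\tilde\cK)=\sum_{Q\subset TW_L}\beta_k(\cC(\Pi|_Q, r))$. By Lemma~\ref{lem:Betti-estimate}, $|\beta_k(\cK)-\beta_k(\tilde\cK)|$ is bounded by the counts of simplices straddling cube faces or meeting the boundary collar $TW_L\setminus\bigcup_{Q\subset TW_L}Q$; the simplex estimates of Section~\ref{section_for_simplex} make these $O(1/s)$ per unit volume (internal faces) and $o(L)$ (outer boundary, of surface order $L^{(N-1)/N}$). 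Dividing by $Leb^N(TW_L)$, letting $L\to\infty$ with the number of interior cubes asymptotic to $Leb^N(TW_L)/s^N$, and then $s\to\infty$ using the cube limit $s^{-N}\Ex[\beta_k(\cC(\Pi|_{Q_s}, r))]\to\hat\beta_k^{(N)}(f(x)/D(x), r)$, gives the shape independence and hence the integrand–level convergence. Finally, $0\le\beta_k\le S_k$ provides exactly the uniform bound of part (b) (case $j=k$), so bounded convergence again passes the limit inside $\int_\cA$, completing part (a).
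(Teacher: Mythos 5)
Your proof is correct, and its skeleton matches the paper's own: pointwise convergence of the integrands followed by bounded convergence, with the uniform bound coming from $\beta_k \le S_k$ together with the simplex-count domination, and with part (b) read off directly from Corollary~\ref{corhom}. The genuine difference lies in the core of part (a). Both you and the paper apply the linear map $\B_x$ to identify $\cC(\cP_L(f(x)), r, d_x)$ with a standard-metric \v{C}ech complex over a homogeneous Poisson process of intensity $f(x)/D(x)$ on the parallelepiped $\B_x W_L$ of volume $D(x)L$. At that point the paper simply cites Theorem~1.5 of \cite{Duy-2016}, which already yields convergence of the expected Betti number per unit volume irrespective of the window's shape; you instead prove this shape independence by hand: tile by cubes of side $s$, compare with the disjoint union of per-cube complexes via Lemma~\ref{lem:Betti-estimate}, control straddling and boundary simplices by the (CP2)-type volume estimate (an $O(1/s)$ per-volume term plus an $o(L)$ boundary term), and take $L\to\infty$ followed by $s\to\infty$, after upgrading the almost sure cube limit of \cite{ysa} to convergence in expectation by generalized dominated convergence with dominator $S_k/L$. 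Your route is longer but self-contained: it removes the reliance on \cite{Duy-2016} and reuses the very partitioning device the paper employs for the inhomogeneous case (Proposition~\ref{pro:S,bdd}), whereas the paper's citation buys brevity. The one step to state carefully is the expectation upgrade of the cube limit, since $\hat\beta_k^{(N)}$ is defined in the paper only through an almost sure statement; your parenthetical Pratt-type argument does handle it, given the almost sure and expected convergence of $S_k/L$ supplied by Remark~\ref{euclid_simp_hom} and Corollary~\ref{corhom}.
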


The proof of Lemmas~\ref{lem:SLLN for partition} and \ref{lem:Ltoinfty} will be given later in this subsection. By using them, Proposition~\ref{pro:S,bdd} is shown as follows. 
\begin{proof}[Proof of Proposition~{\rm\ref{pro:S,bdd}}]
 For fixed $n$ and $L >0$, since the union $\cup_{i}\cC(\cP_n|_{C_{n,i}}, r_n, \rho)$ is a subcomplex of $\cC(\cP_n, r_n, \rho)$, it follows from Lemma~\ref{lem:Betti-estimate} and the equations~\eqref{betti_part}, \eqref{simp_part} that 
 \[
 \left| \beta_k(\cP_n, r_n, \rho) - \sum_{i} \beta_k(\tilde \cP_n|_{W_{n,i}}, r, \rho_n) \right| \leq \sum_{j=k}^{k+1} \left( S_j(\cP_n, r_n, \rho) - \sum_i S_j(\tilde \cP_n|_{W_{n,i}}, r, \rho_n) \right).
\]
Divide both sides by $n$. It follows from Proposition~\ref{thm: 4-order} and Lemma~\ref{lem:SLLN for partition} that in the thermodynamic regime, almost surely,
\begin{align*}
 \limsup_{n \to \infty}  \frac{\beta_k(\cP_n, r_n, \rho)}{n} \leq & \int_{\cA} \frac {\Ex [\beta_k(\cP_{L}({f}(x)), r, d_{x})]} { L}  dx\\
 &+ \sum_{j=k}^{k+1} \left| A_j^{(N)}(r) \int_{\cA} \frac{f(x)^{j+1}}{D(x)^j} dx -  \int_{\cA} \frac {\Ex [S_j(\cP_{L}({f}(x)), r, d_{x})]} { L}  dx \right|,
\end{align*}
\begin{align*}
\liminf_{n \to \infty}  \frac{\beta_k(\cP_n, r_n, \rho)}{n} \geq & \int_{\cA} \frac {\Ex [\beta_k(\cP_{L}({f}(x)), r, d_{x})]} { L}  dx\\
 &- \sum_{j=k}^{k+1} \left| A_j^{(N)}(r) \int_{\cA} \frac{f(x)^{j+1}}{D(x)^j} dx -  \int_{\cA} \frac {\Ex [S_j(\cP_{L}({f}(x)), r, d_{x})]} { L}  dx \right|.
 \end{align*}
 Now let $L \to \infty$. By Lemma~\ref{lem:Ltoinfty}, we obtain that almost surely,
\[
  \limsup_{n \to \infty}  \frac{\beta_k(\cP_n, r_n, \rho)}{n}  \leq  \int_{\cA} \hat \beta_k^{(N)} \left(\frac{f(x)}{D(x)}, r \right) D(x) dx,
\]
\[
 \liminf_{n \to \infty}  \frac{\beta_k(\cP_n, r_n, \rho)}{n}  \geq   \int_{\cA} \hat \beta_k^{(N)} \left(\frac{f(x)}{D(x)}, r \right) D(x) dx.
\] 
The proof is complete.
\end{proof}

Now what remains is to prove Lemmas~\ref{lem:SLLN for partition} and \ref{lem:Ltoinfty}.
To obtain the required results, the following 
implications of the coupling property of Poisson point processes, taken from \cite{Duy-2016, ysa}, are needed. Since $n/\alpha_n^N \to 1$ as $n \to \infty$ in the thermodynamic regime, choose $\Lambda > 0$ such that for all $n$ and $x \in W_{n,i}$,
$f_n(x) \leq \Lambda$.

\begin{enumerate}[(CP1)]
\item Let  $ t= \int_{W_{n,i}} f_n(x) dx = \int_{C_{n,i}} n f(x) dx$. Then the number of points in $W_{n,i}$, denoted by $N_t$, has a
Poisson distribution with parameter $t$. Clearly, $N_t$ is stochastically dominated by $N_{\Lambda L}$, which is a Poisson random variable 
with parameter $\Lambda L$. Therefore, 
\begin{equation}
\label{eqn : dominated}
\Ex[N_t ^{k}] \leq \Ex[N_{\Lambda L} ^{k}] \leq c(k, \Lambda L),
\end{equation}
where $c(k, \Lambda L)$ is a constant depending only on $k$ and $\Lambda L$. 

Also, for $\nu \in \N$,
\begin{align}
\label{eqn : tr bound}
       (\beta_k(\tilde \cP_n|_{W_{n,i}} ,r, \rho_n))^{\nu} \leq (S_k(\tilde\cP_n|_{W_{n,i}} , r, \rho_n))^{\nu} \leq N_t^{(k+1)\nu}.
\end{align}
Combining the relations \eqref{eqn : dominated} and  \eqref{eqn : tr bound} yields
\begin{equation*}\label{nuniformbound}
\Ex[ (\beta_k(\tilde \cP_n|_{W_{n,i}} ,r, \rho_n))^{\nu} ] \leq \Ex[ (S_k(\tilde \cP_n|_{W_{n,i}} ,r, \rho_n))^{\nu} ] \leq c(k, \nu, \Lambda L).
\end{equation*}
 \item{Let $r \in (0, \infty)$, $\lambda \geq 0$, and $A$ be a bounded Borel subset of $\R^N$.
 Let $S_j(\cP(\lambda), r; A)$ be the number of $j$-simplices that has at least one vertex in $A$. Then for $\lambda \leq \Lambda$,
\[
     \Ex[S_j(\cP(\lambda)|_A, r)] \leq \Ex[S_j(\cP(\Lambda), r; A)] \leq C(\Lambda, r, j)Leb^{N}(A),
 \]   
 where $C(\Lambda, r, j)$ is a constant depending only on $\Lambda, r$ and $j$.
 }
\end{enumerate}

We first discuss the proof of the statement (a) of Lemma~\ref{lem:SLLN for partition}, while its statement (b) follows similarly.
The almost sure convergence in Lemma~\ref{lem:SLLN for partition}, after establishing the convergence in expectation,  
follows from the following result:
\begin{lemma}
\label{lem:almost_sure}
Assume that for each $n$, the sequence $\{\xi_{n, i}\}_{i = 1}^{T_n}$ consists of independent random variables and that 
\[
	\sup_n\sup_{i} \Ex[|\xi_{n, i}|^4] < \infty.
\]
Assume further that $T_n/n \to \alpha \in (0, \infty)$ as $n \to \infty$. 
Then as $n \to \infty$,
\[
	\frac{1}{n} \sum_{i = 1}^{T_n}\Big( \xi_{n, i} - \Ex[\xi_{n, i}]\Big) \to 0~ \text{\rm{a.s.}}
\]
In addition, if  $\frac{1}{n}  \sum_{i = 1}^{T_n} \Ex[\xi_{n, i}] \to \mu$
then $\frac{1}{n}  \sum_{i = 1}^{T_n} \xi_{n, i} \to \mu \text{ almost surely as }n\to \infty$.
\end{lemma}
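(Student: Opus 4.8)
The plan is to prove the first assertion by the fourth-moment method combined with the first Borel--Cantelli lemma, in the same spirit as the proof of Proposition~\ref{thm: 4-order}; the second assertion is then immediate. Write $\zeta_{n,i} = \xi_{n,i} - \Ex[\xi_{n,i}]$, so that $\Ex[\zeta_{n,i}] = 0$, and set $S_n = \sum_{i=1}^{T_n}\zeta_{n,i}$, the quantity we must show satisfies $S_n/n \to 0$ almost surely. By Minkowski's inequality $\Ex[|\zeta_{n,i}|^4]^{1/4} \le 2\,\Ex[|\xi_{n,i}|^4]^{1/4}$, so the hypothesis $\sup_n\sup_i \Ex[|\xi_{n,i}|^4] < \infty$ yields a constant $C$ with $\sup_n\sup_i \Ex[\zeta_{n,i}^4] \le C$, and hence $\sup_n\sup_i \Ex[\zeta_{n,i}^2] \le \sqrt{C}$ by Jensen's inequality.

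Next I would expand $\Ex[S_n^4] = \sum_{i,j,k,l} \Ex[\zeta_{n,i}\zeta_{n,j}\zeta_{n,k}\zeta_{n,l}]$ and exploit the independence of $\{\zeta_{n,i}\}_i$ together with $\Ex[\zeta_{n,i}] = 0$. Any summand in which some index has multiplicity exactly one factorizes and vanishes, so only two families survive: the $T_n$ diagonal terms $\Ex[\zeta_{n,i}^4]$, and the $3\,T_n(T_n-1)$ two-pair terms $\Ex[\zeta_{n,i}^2]\Ex[\zeta_{n,l}^2]$ with $i \ne l$ (using independence to split the product). With the uniform bounds above this gives $\Ex[S_n^4] \le C\,T_n + 3C\,T_n^2$. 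Since $T_n/n \to \alpha \in (0,\infty)$, we have $T_n \le 2\alpha n$ for all large $n$, and therefore $\Ex[S_n^4] \le K n^2$ for some constant $K$ and all $n$ large.

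Finally, for fixed $\varepsilon > 0$, Markov's inequality gives $\Prob(|S_n| \ge \varepsilon n) \le \Ex[S_n^4]/(\varepsilon^4 n^4) \le K/(\varepsilon^4 n^2)$ for $n$ large. As $\sum_n n^{-2} < \infty$, the first Borel--Cantelli lemma shows $\Prob(\limsup_n \{|S_n/n| \ge \varepsilon\}) = 0$; intersecting the corresponding full-measure events over $\varepsilon = 1/m$, $m \in \N$, we conclude $S_n/n \to 0$ almost surely, which is the first claim. For the second claim, writing $\tfrac1n\sum_{i=1}^{T_n}\xi_{n,i} = S_n/n + \tfrac1n\sum_{i=1}^{T_n}\Ex[\xi_{n,i}]$ and letting $n \to \infty$ gives the limit $0 + \mu = \mu$ almost surely.

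The argument is essentially routine once the moment bound is in place; the point worth care is that, because $\{\xi_{n,i}\}$ is a genuine triangular array (the entire row changes with $n$), one cannot invoke a classical strong law directly. The role of the fourth moment is precisely to make the tail probabilities $\Prob(|S_n/n|\ge\varepsilon)$ summable in $n$, so that Borel--Cantelli applies to the single-indexed sequence $\{S_n/n\}_n$ without any maximal inequality or subsequence interpolation. I expect the combinatorial bookkeeping in the fourth-moment expansion---verifying that all terms with an index of multiplicity one drop out---to be the only mildly delicate step, and it is exactly the mechanism already used in Proposition~\ref{thm: 4-order}.
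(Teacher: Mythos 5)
Your proof is correct and follows exactly the route the paper intends: the paper states only that Lemma~\ref{lem:almost_sure} ``can be easily proved by calculating the order of the fourth moments,'' and your argument (centering, the fourth-moment expansion in which terms with an index of multiplicity one vanish by independence, the bound $\Ex[S_n^4]\le C\,T_n+3C\,T_n^2 = O(n^2)$, then Markov and Borel--Cantelli) is precisely that calculation, matching the mechanism used for Proposition~\ref{thm: 4-order}. No gaps; the counting of diagonal and two-pair terms and the reduction of the second claim to the first are all accurate.
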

Lemma~\ref{lem:almost_sure} can be easily proved by calculating the order of the fourth moments.

By taking $\xi_{n, i}\ = \beta_k(\tilde \cP_n|_{W_{n,i}} , r, \rho_n)$ in Lemma \ref{lem:almost_sure}, 
their almost sure convergence follows from their convergence in expectation because 
by the property (CP1),
\[
       \sup_n\sup_{i} \Ex[(\beta_k(\tilde \cP_n|_{W_{n,i}} , r, \rho_n))^4] \leq  \sup_n\sup_{i} \Ex[(S_k(\tilde \cP_n|_{W_{n,i}} , r, \rho_n))^4]  < c(k, 4, \Lambda L) < \infty;
\]
\[ 
     \text{and since}~T_n = |\{C_{n,i}\}|,   \frac{T_n}{n} = \frac{Leb^{(N)}(\cA_n) \alpha_n^N}{n L} \to \frac{Leb^{(N)}(\cA)}{L} \in (0, \infty)~\text{as}~n \to \infty.
\]

Thus, the remaining thing in the proof of Lemma~\ref{lem:SLLN for partition} is to show the convergence in expectation. 
Let $\one_{C_{n,i}}$ be the indicator function of $C_{n,i}$.
The idea is to write the quantity $n^{-1} \sum_{i} \Ex[\beta_k(\tilde{\cP}_n|_{W_{n,i}}, r, \rho_n)]$ 
in terms of the integral of an appropriate function so that the Bounded Convergence Theorem (BCT) can be applied. 
For fixed $L > 0$, define the function $F_n\colon \R^N \to \R $ as 
\[
    F_n(x) :=  \frac{1} {L} \sum_i\Ex [\beta_k(\tilde \cP_n|_{W_{n,i}} , r, \rho_n)] \one_{C_{n,i}}(x).
\] 
Then
\begin{equation}\label{int_form}
 \int_{\cA_n} F_n (x) dx  = \frac{1}{\alpha_n^N} \sum_i\Ex [\beta_k(\tilde \cP_n|_{W_{n,i}} , r, \rho_n)].
\end{equation}
It is clear from the property~(CP1) that $F_n(x) \le L^{-1}c(k, 1, \Lambda L)$.

Now we consider the pointwise limit of $F_n$.  
If $x_0 \notin \cA$, there exists $n_{x_0} \in \N$ such that for all $n \geq n_{x_0}$, $ x_0 \notin \cA_n$. In this case, $F_n (x_0) \to 0$ as $n \to \infty$.
Let $x_0 \in \cA^\circ$ be a Lebesgue point of $f$ and $ \lambda  = f(x_0)$. In this case, 
the limiting behavior of $F_n(x_0)$ is determined by the following two estimates: 
 the non-homogeneous Poisson point process $\tilde \cP_n|_{W_{n,i_n}}$ is approximated by the homogeneous Poisson point process $\cP(\lambda)|_{W_{n, i_n}}$
and the non-homogeneous metric $\rho_n$ on $W_{n, i_n}$ is approximated by the homogeneous metric $d_{x_0}$ on $W_{n, i_n}$. Here $i_n$ is the unique index such that $x_0 \in C_{n, i_n}$.
The first estimate is as follows.

\begin{lemma} \label{lem:1_estimate} 
Let $x_0 \in \cA^\circ$ be a Lebesgue point of $f$ and $f(x_0) = \lambda$. Then as $n \to \infty$ with $n^{1/N}r_n \to r \in (0, \infty)$,
\begin{align*}
& \text{\rm{(a)}}~~ \big|\Ex[ \beta_k (\tilde \cP_n|_{W_{n,i_n}}, r, \rho_n)] - \Ex[ \beta_k (\cP(\lambda)|_{W_{n,i_n}}, r,  \rho_n)]\big| \to 0, \\
& \text{\rm{(b)}}~~\big|\Ex[ S_j (\tilde \cP_n|_{W_{n,i_n}}, r, \rho_n)] - \Ex[ S_j (\cP(\lambda)|_{W_{n,i_n}}, r, \rho_n)]\big| \to 0. 
\end{align*}
\end{lemma}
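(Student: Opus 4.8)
The plan is to funnel both statements through the single scalar estimate
\[
\varepsilon_n := \int_{W_{n,i_n}} \bigl| f_n(x) - \lambda \bigr|\, dx \to 0,
\]
which measures how far the intensity $f_n$ of $\tilde\cP_n$ on the cube $W_{n,i_n}$ is from the constant intensity $\lambda = f(x_0)$ of $\cP(\lambda)$. To establish it, I would substitute $x = \alpha_n y$, so that $W_{n,i_n}$ becomes $C_{n,i_n}$, $dx = \alpha_n^N\, dy$ and $f_n(\alpha_n y) = a_n f(y)$ with $a_n := n/\alpha_n^N \to 1$. Using $|a_n f(y) - \lambda| \le a_n|f(y)-\lambda| + |a_n - 1|\lambda$ together with the identities $\alpha_n^N a_n = n$ and $\alpha_n^N Leb^N(C_{n,i_n}) = L$, one obtains
\[
\varepsilon_n \le n\int_{C_{n,i_n}} |f(y) - \lambda|\, dy + |a_n - 1|\,\lambda L = \bigl(a_n L\bigr)\,\frac{1}{Leb^N(C_{n,i_n})}\int_{C_{n,i_n}} |f(y)-\lambda|\, dy + |a_n-1|\lambda L .
\]
Since the cubes $C_{n,i_n}$ all contain $x_0$ and have diameter tending to zero, they shrink regularly to $x_0$, so the Lebesgue point property of $f$ at $x_0$ forces the average to vanish; combined with $a_n \to 1$ this yields $\varepsilon_n \to 0$.

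For part (b) I would invoke the Palm theorem (Theorem~\ref{palm1}) to write both expectations as integrals over $W_{n,i_n}^{j+1}$ against $h_{j,r,\rho_n}$, weighted by $\prod_{i=0}^j f_n(x_i)$ and by $\lambda^{j+1}$ respectively. Their difference is controlled by the telescoping bound
\[
\Bigl| \prod_{i=0}^j f_n(x_i) - \lambda^{j+1} \Bigr| \le \Lambda^{j}\sum_{m=0}^{j} |f_n(x_m) - \lambda| ,
\]
valid because $f_n \le \Lambda$ and $\lambda \le \Lambda$ on $W_{n,i_n}$ (enlarging the constant $\Lambda$ from (CP1) if needed). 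By the permutation symmetry of $h_{j,r,\rho_n}$ the $(j+1)$ terms contribute equally, and integrating out all variables but one leaves $\int_{(\R^N)^j} h_{j,r,\rho_n}(x_0,x_1,\dots,x_j)\,dx_1\cdots dx_j$, which is uniformly bounded by a constant thanks to (P2) (exactly as in the proof of Lemma~\ref{lem: ex_conv_sim}). What survives is a constant multiple of $\int_{W_{n,i_n}} |f_n(x_0)-\lambda|\,dx_0 = \varepsilon_n \to 0$.

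The main obstacle is part (a), because Betti numbers admit no Palm-type integral representation. I would resolve this by a maximal coupling: let $\cP^{0}$ be a Poisson process on $W_{n,i_n}$ with intensity $\min(f_n,\lambda)$, and let $\cP^{+}, \cP^{-}$ be independent Poisson processes with intensities $(f_n-\lambda)^+$ and $(\lambda-f_n)^+$; by the superposition property we may realize $\tilde\cP_n|_{W_{n,i_n}} = \cP^{0}\cup\cP^{+}$ and $\cP(\lambda)|_{W_{n,i_n}}=\cP^{0}\cup\cP^{-}$ on a single probability space, so that $\cC(\cP^{0},r,\rho_n)$ is a subcomplex of both. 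Lemma~\ref{lem:Betti-estimate} and the triangle inequality then give, pathwise (all simplex counts taken on $W_{n,i_n}$ with the metric $\rho_n$),
\[
\bigl|\beta_k(\tilde\cP_n|_{W_{n,i_n}}, r, \rho_n) - \beta_k(\cP(\lambda)|_{W_{n,i_n}}, r, \rho_n)\bigr| \le \sum_{j=k}^{k+1}\Bigl[\bigl(S_j(\tilde\cP_n) - S_j(\cP^{0})\bigr) + \bigl(S_j(\cP(\lambda)) - S_j(\cP^{0})\bigr)\Bigr].
\]
Taking expectations and applying the Palm theorem to each simplex-count difference, the same telescoping now produces the integrands $\prod_i f_n(x_i) - \prod_i \min(f_n(x_i),\lambda)$ and $\lambda^{j+1} - \prod_i \min(f_n(x_i),\lambda)$, both dominated by $\Lambda^{j}\sum_m |f_n(x_m)-\lambda|$; the reduction from part (b) bounds the right-hand side by a constant multiple of $\varepsilon_n$, which tends to $0$. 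This closes (a), and the whole argument rests on the one estimate $\varepsilon_n \to 0$ supplied by the Lebesgue point property.
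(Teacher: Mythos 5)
Your proof is correct, and it takes a genuinely different route from the paper's, even though the two share their main ingredients. The scalar estimate $\varepsilon_n=\int_{W_{n,i_n}}|f_n(x)-\lambda|\,dx\to 0$ is exactly the paper's computation (including the use of the Lebesgue point property for the regularly shrinking cubes $C_{n,i_n}$), and your superposition coupling $(\cP^0,\cP^+,\cP^-)$ is the very same maximal coupling that the paper constructs by lifting a rate-one homogeneous Poisson process to $W_{n,i_n}\times[0,\infty)$ and restricting to the regions under the graphs of $f_n$ and of $\lambda$. The difference lies in how the coupling is exploited. The paper argues qualitatively: the two processes coincide as point sets with probability $\exp(-\varepsilon_n)\to 1$, hence the Betti (or simplex-count) difference tends to $0$ in probability, and the uniform second-moment bound from (CP1) supplies the uniform integrability needed to pass to expectations (via the corollary to Theorem~25.12 in Billingsley); this one argument covers (a) and (b) simultaneously, with no Palm calculus at all. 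You instead argue quantitatively: for (b), a direct application of Theorem~\ref{palm1} plus the telescoping bound $\bigl|\prod_i f_n(x_i)-\lambda^{j+1}\bigr|\le\Lambda^j\sum_m|f_n(x_m)-\lambda|$ yields an explicit bound $C\varepsilon_n$; for (a), you keep the shared component $\cP^0$ as a common sub-point-process, apply Lemma~\ref{lem:Betti-estimate} pathwise to dominate the Betti difference by nonnegative simplex-count differences, and bound their expectations again by $C\varepsilon_n$ via Palm. Your reduction of the inner integrals to a constant via (P2) is legitimate here precisely because $C_{n,i_n}$ has vanishing diameter, so $\rho_n(x,y)\ge c\|x-y\|$ holds on all of $W_{n,i_n}$ for large $n$. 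What each approach buys: the paper's is shorter and works for any functional of the configuration with uniformly bounded second moments, but gives no rate; yours costs more bookkeeping but produces an explicit $O(\varepsilon_n)$ rate for both differences and avoids the convergence-in-probability-plus-uniform-integrability step entirely.
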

\begin{proof} (a)
 Let $\Phi $ be a homogeneous Poisson point process on $W_{n,i_n}\times [0, \infty )$ with intensity $1$. Define the following:
\[
     A_n = \{(x,t) \in W_{n,i_n} \times [0, \infty)\colon t \leq f_n(x) \}  ;~~  B_n =  \{(x,t) \in W_{n,i_n} \times [0, \infty)\colon t \leq \lambda \}.
\]
Then $\tilde \cP_n|_{W_{n,i_n}}$ (resp.~$\cP(\lambda)|_{W_{n,i_n}}$) has the same distribution with the projection of $\Phi |_ {A_n}$ (resp.~$\Phi |_ {B_n}$) onto $W_{n, i_n}$. 
Let 
\[
\Delta_n = \{(x,t) \in W_{n,i_n} \times [0, \infty)\colon  \min(f_n(x), \lambda) \leq t \leq   \max(f_n(x),\lambda) \}.
\] 
By using this coupling, $\tilde \cP_n|_{W_{n,i_n}} $ is identical with $\cP(\lambda)|_{W_{n,i_n}} $, denoted by $\tilde \cP_n|_{W_{n,i_n}} \equiv \cP(\lambda)|_{W_{n,i_n}}$,  if and only if there is no point of $\Phi $ in the region $\Delta_n$.
Thus
\begin{equation*} \label{prob_iden}
  \Prob(\tilde \cP_n|_{W_{n,i_n}} \equiv \cP(\lambda)|_{W_{n,i_n}}) = \exp \Big( - \int_{W_{n,i_n}} |f_n(x) - \lambda| dx \Big).
\end{equation*}
Consider
\begin{align*}
    \int_{W_{n,i_n}} |f_n(x) - \lambda| dx &= \int_{W_{n,i_n}} \left|\frac{n}{\alpha_n^N}  f\left(\frac{x}{\alpha_n}\right) -\lambda \right| dx\\
     &\leq \frac{n}{\alpha_n^N} \int_{W_{n,i_n}} \left|f \left(\frac{x}{\alpha_n}\right) -\lambda\right| dx +  \int_{W_{n,i_n}} \lambda \left|\frac{n}{\alpha_n^N} -1\right| dx \\
    & = \frac{nL}{\alpha_n^N Leb^{N}(C_{n,i_n})}  \int_{C_{n,i_n}} |f(x) - \lambda| dx + L\lambda \left|\frac{n}{\alpha_n^N} -1\right|.
\end{align*}
As $n \to \infty$, $n/\alpha_n^N \to 1$ and $\frac{1} {Leb^{N}(C_{n,i_n})}\int_{C_{n,i_n}} |f(x) - \lambda| dx \to 0$ because $x_0$ is a Lebesgue point of $f$. Therefore,
\[
      \Prob(\tilde \cP_n|_{W_{n,i_n}} \equiv \cP(\lambda)|_{W_{n,i_n}}) \to 1 ~ \text{as}~ n \to \infty.
\]
Since $\Prob (\beta_k (\tilde \cP_n|_{W_{n,i_n}}, r, \rho_n)  = \beta_k (\cP(\lambda)|_{W_{n,i_n}}, r,  \rho_n)) \geq \Prob(\tilde \cP_n|_{W_{n,i_n}} \equiv \cP(\lambda)|_{W_{n,i_n}})$, it follows that
\begin{equation*}\label{nice_coup_1}
 \beta_k (\tilde \cP_n|_{W_{n,i_n}}, r, \rho_n) - \beta_k (\cP(\lambda)|_{W_{n,i_n}}, r, \rho_n)  \to 0 \text{ in probability}.
\end{equation*}
In addition, we have
\[
\Ex[(\beta_k (\tilde \cP_n|_{W_{n,i_n}}, r, \rho_n) - \beta_k (\cP(\lambda)|_{W_{n,i_n}}, r, \rho_n))^2] \leq 2c(k, 2, \Lambda L ).
\] 
Therefore, the convergence in expectation, or the statement (a), follows by the corollary following Theorem~$25.12$ in \cite{billing}. The statement (b) is similarly proved. 
The proof is complete.
\end{proof}

For the second estimate, we need the following lemma and the uniform convergence of the metrics. Let $C_k(\cP_L(\lambda), r, d_x)$ be the set of $k$-simplices in $\cC(\cP_L(\lambda), r, d_x)$, where $x \in \cA$. 
 Note that if we define $C_k(\cP_L(\lambda), r^+, d_x):= \bigcap_{s>r} C_k(\cP_L(\lambda), s, d_x)$ then it follows from the definition of \v{C}ech complexes that
\[
C_k(\cP_L(\lambda), r^+, d_x)= C_k(\cP_L(\lambda), r, d_x).
\]
As a consequence, $\Ex[S_k(\cP_L(\lambda), r, d_x)]$ is right continuous at $r$. Moreover, by the scaling property of Poisson point processes and Theorem~\ref{palm1}, it can be easily shown that $\Ex[S_k(\cP_L(\lambda), r, d_x)]$ is also left continuous at $r$.

\begin{lemma}\label{continuity_simp}
Let 
$ C_k(\cP_L(\lambda), r^-, d_x) : = \bigcup_{s < r} C_k(\cP_L(\lambda), s, d_x)$.
Then for fixed $r > 0$,
\[
           \Prob (C_k(\cP_L(\lambda), r, d_x)=C_k(\cP_L(\lambda), r^-, d_x)) = 1,
 \]
where `$=$' means set equality. 
\end{lemma}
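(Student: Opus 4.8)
The plan is to translate the asserted set equality into a statement about \v{C}ech filtration values and then show, via the Palm formula of Theorem~\ref{palm1}, that almost surely no $(k+1)$-point subset of $\cP_L(\lambda)$ has filtration value exactly $r$. For a finite set $\sigma = \{y_0, \dots, y_k\}$ I would define its filtration value
\[
  \tau(\sigma) := \min_{c \in \R^N} \max_{0 \le i \le k} d_x(y_i, c),
\]
the smallest $d_x$-radius of a ball enclosing $\sigma$; the minimum is attained because $c \mapsto \max_i d_x(y_i, c)$ is continuous and coercive (by \eqref{min-max}), and $\sigma$ spans a $k$-simplex of $\cC(\cP_L(\lambda), s, d_x)$ exactly when $\tau(\sigma) \le s$. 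Hence $C_k(\cP_L(\lambda), r, d_x) \setminus C_k(\cP_L(\lambda), r^-, d_x)$ is precisely the set of $k$-simplices $\sigma$ with $\tau(\sigma) = r$, and it suffices to prove that almost surely there is no such $\sigma$.

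First I would apply Theorem~\ref{palm1} to the bounded symmetric indicator $h(\cY) = \one\{|\cY| = k+1,\ \tau(\cY) = r\}$ with intensity function $\lambda \one_{W_L}$, obtaining
\[
  \Ex\big[\,\#\{\sigma \subset \cP_L(\lambda) : |\sigma| = k+1,\ \tau(\sigma) = r\}\,\big]
  = \frac{\lambda^{k+1}}{(k+1)!}\int_{W_L^{k+1}} \one\{\tau(\y) = r\}\,d\y
  \le \frac{\lambda^{k+1}}{(k+1)!}\, Leb^{N(k+1)}(Z_r),
\]
where $Z_r := \{\y \in (\R^N)^{k+1} : \tau(\y) = r\}$. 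Since this count is a nonnegative integer-valued random variable, it vanishes almost surely once I show $Leb^{N(k+1)}(Z_r) = 0$; this is the only nontrivial point, and everything else is bookkeeping.

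To kill $Z_r$ I would exploit the translation invariance and positive homogeneity of $\tau$. Passing to relative coordinates $v_i = y_i - y_0$ and setting $g(v_1, \dots, v_k) := \tau(y_0, \dots, y_k)$, translation invariance of $d_x$ makes $g$ well defined on $(\R^N)^k$, and homogeneity of $d_x$ gives $g(\lambda v) = \lambda g(v)$ for $\lambda > 0$. Comparability of $d_x$ to the Euclidean norm from \eqref{min-max} yields $g(v) \ge \tfrac12 \lambda_{\min}(\B_x)\max_i\|v_i\|$, so $\{g \le 1\}$ is bounded and of finite measure. Homogeneity then gives $\{g \le s\} = s\{g \le 1\}$, whence $s \mapsto Leb^{Nk}\{g \le s\} = s^{Nk}\,Leb^{Nk}\{g \le 1\}$ is continuous; this forces $Leb^{Nk}\{g = r\} = 0$. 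A volume-preserving change of variables $(y_0, v_1, \dots, v_k) \leftrightarrow \y$ together with Fubini then gives $Leb^{N(k+1)}(Z_r) = \int_{\R^N} Leb^{Nk}\{v : g(v) = r\}\,dy_0 = 0$, as required.

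The main obstacle is exactly the measure-zero claim for $Z_r$; I expect the scaling argument above to be the cleanest route, since it sidesteps any smoothness analysis of the enclosing-radius function and relies only on the homogeneity and comparability properties of $d_x$ already recorded in \eqref{min-max}. Once $Leb^{N(k+1)}(Z_r) = 0$ is in hand, the passage from vanishing expectation to the almost sure set equality is immediate.
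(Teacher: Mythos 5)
Your proof is correct, but it is structured differently from the paper's. The paper never introduces the enclosing-radius function $\tau$; instead, it records just before the lemma (with the verification left to the reader) that $\Ex[S_k(\cP_L(\lambda), s, d_x)]$ is left continuous in $s$, then takes an increasing sequence $s_n \uparrow r$, notes that $S_k(\cP_L(\lambda), s_n, d_x)$ increases to a limit $F$ dominated by $S_k(\cP_L(\lambda), r, d_x)$, identifies $\Ex[F]$ with $\Ex[S_k(\cP_L(\lambda), r, d_x)]$ via that left continuity, and concludes $F = S_k(\cP_L(\lambda), r, d_x)$ a.s., which forces the nested simplex sets to coincide. You argue directly: the difference $C_k(\cP_L(\lambda), r, d_x) \setminus C_k(\cP_L(\lambda), r^-, d_x)$ consists exactly of the $(k+1)$-point subsets with enclosing radius equal to $r$, and its expected cardinality vanishes by Theorem~\ref{palm1} once the level set $\{\tau = r\}$ is shown to be Lebesgue-null, which you obtain from translation invariance, positive homogeneity and the comparability bound \eqref{min-max}. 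The two arguments rest on the same pillars (the Palm formula and a scaling argument), and your explicit measure-zero computation is precisely the content hidden in the paper's unproved left-continuity claim: by Theorem~\ref{palm1}, the jump of $\Ex[S_k(\cP_L(\lambda), \cdot, d_x)]$ at $r$ is a constant multiple of the Lebesgue measure of $\{\tau = r\}$ intersected with $W_L^{k+1}$. What the paper's route buys is economy, since it reuses continuity facts about expected simplex counts that it needs anyway and avoids defining $\tau$; what yours buys is a shorter, fully self-contained chain with no monotone-convergence step and no appeal to an ``easily shown'' assertion. One cosmetic remark: you use $\lambda$ both for the Poisson intensity and as the scaling parameter in the homogeneity relation $g(\lambda v) = \lambda g(v)$; rename one of the two to avoid the clash.
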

\begin{proof}
Let $\{s_n\}$ be an increasing sequence of positive real numbers converging to $r$ as $n \to \infty$. Since for all $n$, the set 
$ C_k( \cP_L(\lambda), s_n, d_x)$ is a subset of  $C_k(\cP_L(\lambda), r, d_x) $, the almost sure convergence of 
 $S_k( \cP_L(\lambda), s_n, d_x)$ to $S_k( \cP_L(\lambda), r, d_x)$, as $s_n$ approaches $r$, implies the desired result.
 
Note that for any realization of $\cP(\lambda)$ in $W_L$, 
$S_k( \cP_L(\lambda), s_n, d_x)$ is an increasing sequence, which is  bounded by $S_k( \cP_L(\lambda), r, d_x)$ and  
hence converges to a finite value, denoted by $F(\cP_L(\lambda), r)$. 
This implies as $s_n \to r$,
\[
 \Ex[S_k(\cP_L(\lambda), s_n, d_x)] \to \Ex[F(\cP_L(\lambda), r)].
\]  
But since $\Ex[S_k(\cP_L(\lambda), r, d_x)]$ is left continuous at $r$, it follows that $F(\cP_L(\lambda), r) = S_k(\cP_L(\lambda), r, d_x)$ almost surely. 
Thus as $s_n \to r$, 
\[
S_k( \cP_L(\lambda), s_n, d_x) \to S_k( \cP_L(\lambda), r, d_x) ~\text{a.s.},
\]
which completes the proof.
 \end{proof}

In the following definition and lemma, the metrics $\rho_n$ and $ d_{x_0}$ on $W_{n,i_n}$ are abstract ones, although denoted by the same notation as used 
for defining particular metrics on $W_{n,i_n}$.
\begin{definition}
Let $\rho_n$ be a sequence of metrics on $W_{n,i_n}$. We say $\rho_n$ converges uniformly to a metric $d_{x_0}$ on $W_{n,i_n}$, if for given $\varepsilon > 0$ there 
exists $n_{x_0}$ such that for all $n \geq n_{x_0}$, whenever $x,y \in W_{n,i_n}$,
\[
        |\rho_n (x,y) - d_{x_0}(x,y)| \leq \varepsilon.
\]
\end{definition}

It is clear from~(P$1$) and the inequality \eqref{prop_metric_d} that $\rho_n (x,y) = \alpha_n \rho \left(x/\alpha_n, y/ \alpha_n\right)$ converges uniformly to 
$d_{x_0}(x,y)= \|\B_{x_0}(x-y)\|$ on $W_{n,i_n}$, where $x_0 \in \cA^\circ$.
Now we state and prove the second estimate.

\begin{lemma} \label{lem:2_estimate}
Let $r \in (0, \infty)$ and $\lambda \in [0, \infty)$.
Assume that $ \rho_n$ be a sequence of metrics on $U_n$ converging uniformly to a metric $d_{x_0}$ on $U_n$. Assume further that for each $n$, $W_{n, i_n}$ is a cube of length $L$ satisfying that $\cup_{y \in W_{n, i_n}} B_{\rho_n}(y, r) \subset U_n$. Then as $n \to \infty$,
\begin{align*}
&\text{\rm{(a)}}~~ | \Ex[ \beta_k (\cP(\lambda)|_{W_{n,i_n}}, r,  \rho_n)] - \Ex[ \beta_k (\cP(\lambda)|_{W_{n,i_n}}, r, d_{x_0})]| \to 0,  \\
&\text{\rm{(b)}}~~  | \Ex[ S_j (\cP(\lambda)|_{W_{n,i_n}}, r,  \rho_n)] - \Ex[ S_j (\cP(\lambda)|_{W_{n,i_n}}, r, d_{x_0})]| \to 0. 
\end{align*} 
\end{lemma}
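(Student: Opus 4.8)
The plan is to control both differences by sandwiching the $\rho_n$-\v{C}ech complex between two $d_{x_0}$-\v{C}ech complexes whose radii straddle $r$ by a vanishing amount, and then to exploit the stationarity of $\cP(\lambda)$ together with the continuity of the map $s \mapsto \Ex[S_j(\cP_L(\lambda), s, d_{x_0})]$.

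First I would fix $\varepsilon > 0$ and invoke the uniform convergence $\rho_n \to d_{x_0}$: for all $n$ large, $|\rho_n(x,y) - d_{x_0}(x,y)| \le \varepsilon$ on the relevant domain. If the balls $B_{d_{x_0}}(y_i, r-\varepsilon)$ share a common point $z$, then $\rho_n(z, y_i) \le d_{x_0}(z,y_i) + \varepsilon \le r$, so the $B_{\rho_n}(y_i, r)$ also meet; running the reverse comparison at radius $r+\varepsilon$ gives, as simplicial complexes on the common vertex set $\cP(\lambda)|_{W_{n,i_n}}$,
\[
\cC(\cP(\lambda)|_{W_{n,i_n}}, r-\varepsilon, d_{x_0}) \subseteq \cC(\cP(\lambda)|_{W_{n,i_n}}, r, \rho_n) \subseteq \cC(\cP(\lambda)|_{W_{n,i_n}}, r+\varepsilon, d_{x_0}).
\]
The hypothesis $\cup_{y \in W_{n,i_n}} B_{\rho_n}(y,r) \subset U_n$ is precisely what forces the intersection witnesses to lie inside $U_n$, where the metric estimate is valid; verifying this domain bookkeeping (and that the $d_{x_0}$-balls at radii $r \pm \varepsilon$ also stay in $U_n$) is the one step demanding care, and is the principal obstacle of the proof.

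For part (b) the sandwich yields directly $|S_j(\cC(\cdot, r, \rho_n)) - S_j(\cC(\cdot, r, d_{x_0}))| \le S_j(\cC(\cdot, r+\varepsilon, d_{x_0})) - S_j(\cC(\cdot, r-\varepsilon, d_{x_0}))$, since $S_j$ is monotone under inclusion and $\cC(\cdot, r, d_{x_0})$ lies between the same two complexes. For part (a), where $\beta_k$ is not monotone, I would apply Lemma~\ref{lem:Betti-estimate} to the inclusion $\cC(\cdot, r-\varepsilon, d_{x_0}) \subseteq \cC(\cdot, r, \rho_n)$ and bound $S_j(\cC(\cdot, r, \rho_n)) \le S_j(\cC(\cdot, r+\varepsilon, d_{x_0}))$, obtaining $|\beta_k(\cC(\cdot, r, \rho_n)) - \beta_k(\cC(\cdot, r-\varepsilon, d_{x_0}))| \le \sum_{j=k}^{k+1}\big(S_j(\cC(\cdot, r+\varepsilon, d_{x_0})) - S_j(\cC(\cdot, r-\varepsilon, d_{x_0}))\big)$; the identical estimate holds with $\rho_n$ replaced by $d_{x_0}$, and the triangle inequality bounds $|\beta_k(\cC(\cdot, r, \rho_n)) - \beta_k(\cC(\cdot, r, d_{x_0}))|$ by twice that simplex-count gap. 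Taking expectations in either case, and using that $d_{x_0}$ is translation invariant while $\cP(\lambda)$ is stationary, I would replace the side-$L$ cube $W_{n,i_n}$ by the fixed window $W_L$, so that $\Ex[S_j(\cP(\lambda)|_{W_{n,i_n}}, s, d_{x_0})] = \Ex[S_j(\cP_L(\lambda), s, d_{x_0})]$ for every $s$, a quantity independent of $n$.

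Consequently $\limsup_{n\to\infty}$ of the left-hand side is at most a fixed multiple of $\sum_{j=k}^{k+1}\big(\Ex[S_j(\cP_L(\lambda), r+\varepsilon, d_{x_0})] - \Ex[S_j(\cP_L(\lambda), r-\varepsilon, d_{x_0})]\big)$. Finally I would let $\varepsilon \to 0$ and invoke the continuity of $s \mapsto \Ex[S_j(\cP_L(\lambda), s, d_{x_0})]$ at $s = r$: its right continuity was recorded before Lemma~\ref{continuity_simp}, and its left continuity follows from Lemma~\ref{continuity_simp} together with the uniform moment bound (CP1), which upgrades the almost sure set equality there to convergence of expectations. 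This forces the bound to vanish and settles both (a) and (b).
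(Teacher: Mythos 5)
Your proof is correct, but it takes a genuinely different route from the paper's. The paper translates $W_{n,i_n}$ to a fixed window and proves a stronger, pathwise statement: almost surely, $\cC(\cP_L(\lambda), r, \tilde\rho_n) = \cC(\cP_L(\lambda), r, d_{x_0})$ for all $n$ large enough, where the forward inclusion uses Lemma~\ref{continuity_simp} to replace $r$ by some $s<r$ and the reverse inclusion uses a compactness/subsequence argument on the intersection witnesses; expectations then converge by the convergence-in-probability-plus-uniform-moment-bound argument already used for Lemma~\ref{lem:1_estimate}. You instead sandwich $\cC(\cdot, r, \rho_n)$ deterministically between $\cC(\cdot, r-\varepsilon, d_{x_0})$ and $\cC(\cdot, r+\varepsilon, d_{x_0})$, control Betti numbers via Lemma~\ref{lem:Betti-estimate} and simplex counts via monotonicity, pass to expectations, use stationarity and translation invariance to make the bound independent of $n$, and then let $\varepsilon \to 0$ using continuity in the radius of $\Ex[S_j(\cP_L(\lambda), \cdot\,, d_{x_0})]$; both one-sided continuities are indeed recorded in the paper just before and in Lemma~\ref{continuity_simp}, and you may cite the paper's left-continuity assertion directly rather than re-deriving it from Lemma~\ref{continuity_simp} (whose own proof rests on that assertion). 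Your route is more elementary and quantitative: it avoids the subsequence argument and the passage from convergence in probability to convergence of expectations, bounds the discrepancy explicitly by the modulus of continuity of the expected simplex counts at $r$, and handles (a) and (b) by one and the same estimate; the paper's argument, in exchange, yields the stronger almost-sure eventual coincidence of the two complexes. The one caveat you flag, that the $d_{x_0}$-witness for the inclusion $\cC(\cdot, r-\varepsilon, d_{x_0}) \subseteq \cC(\cdot, r, \rho_n)$ must be shown to lie in $U_n$ (the stated hypothesis only controls $\rho_n$-balls), is real but is not a defect of your approach alone: the paper's forward inclusion needs exactly the same property for its witness $v \in \bigcap_{i} B_{d_{x_0}}(v_i, s)$ and passes over it silently, and in the application both versions are covered because $U_n$ may be taken to contain a fixed Euclidean neighbourhood of $W_{n,i_n}$ for all large $n$.
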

\begin{proof} (a) 
The statement hold trivially for $\lambda =0$. Let $\lambda > 0$. Let $W_{n,i_n} = y_n + W$, where $W$ is a fixed window of volume $L$. Define a metric on $U_n - y_n$ as 
\[
\tilde \rho_n(x,y) :=  \rho_n (x+y_n, y+y_n).
\]
Then  $\beta_k(\cP(\lambda)|_{W_{n,i_n}}, r, \rho_n) $ has the same distribution with $\beta_k(\cP_L(\lambda), r, \tilde\rho_n)$. Note that the metric $d_{x_0}$ is translation invariant. Thus, $\beta_k(\cP(\lambda)|_{W_{n,i_n}}, r, d_{x_0}) $ also has the same distribution with $\beta_k(\cP_L(\lambda), r, d_{x_0})$. Therefore, it is clear from the proof of Lemma~\ref{lem:1_estimate} that it is sufficient to show here that almost surely,  
\begin{equation}\label{coincide}
\cC(\cP_L(\lambda), r, \tilde \rho_n) = \cC(\cP_L(\lambda), r, d_{x_0}), \text{ for $n$ large enough.} 
\end{equation}

By Lemma~\ref{continuity_simp}, almost surely,
\[
	\cC(\cP_L(\lambda), r^-, d_{x_0}) = \bigcup_{s< r}  \cC(\cP_L(\lambda), s, d_{x_0}) = \cC(\cP_L(\lambda), r, d_{x_0}).  
\]
Fix  a configuration such that the above holds. The proof is complete by showing  the identity~\eqref{coincide}.

 Let $\sigma = \{v_0, v_1, \ldots, v_k\} $ be a $k$-simplex in $\cC(\cP_L(\lambda), r, d_{x_0})$. We first show that $\sigma$ belongs to $\cC(\cP_L(\lambda), r, \tilde \rho_n)$ when $n$ is large enough. Indeed, $\sigma \in \cC(\cP_L(\lambda), s, d_{x_0})$ for some $s < r$. Take a point $v \in \bigcap_{i=0}^k B_{d_{x_0}}(v_i, s)$. Then $d_{x_0}(v, v_i) \le s$, for all $ 0\leq i \leq k$. Since  $\tilde \rho_n$ converges uniformly to $d_{x_0}$, there exists $n_{x_0} \in \N$ such that for all $n \geq n_{x_0}$ and for all $ 0\leq i \leq k$,
\[
            \tilde\rho_n(v, v_i) \leq (r - s)+ d_{x_0}(v, v_i) \leq r,
\]
which implies that $\sigma \in \cC(\cP_L(\lambda), r, \tilde \rho_n)$.

Conversely, we now show that, if $\sigma \in \cC(\cP_L(\lambda), r, \tilde \rho_{n_j})$, for some subsequence $\{n_j\}$ tending to infinity, then $\sigma \in \cC(\cP_L(\lambda), r, d_{x_0})$.  Let $v_{n_j} \in \cap_{i=0}^{k} B_{\tilde \rho_{n_j}}(v_i, r)$. Because of the compactness,  
there exists a subsequence $\{n_j'\}$ of $\{n_j\}$ such that $\{v_{n'_j}\}$ converges to  $v_\infty$.
 By the triangle inequality, 
\[
    d_{x_0}(v_\infty, v_i) \leq d_{x_0}(v_\infty, v_{n'_j}) + d_{x_0}(v_{n'_j}, v_i).
\]
Clearly, $d_{x_0}(v_\infty, v_{n'_j}) \to 0$ as $n \to \infty$. In addition, by the uniform convergence assumption, for given $\varepsilon > 0$, there exists $n_{x_0} \in \N$ such that for all $n'_j \geq n_{x_0}$ and for all $ 0\leq i \leq k$,
\[
         d_{x_0}( v_{n'_j}, v_i) \leq   r + \varepsilon.
\]
Since $\varepsilon$ is arbitrary, this implies $v_{\infty} \in \cap_{i=0}^{k}  B_{d_{x_0}}(v_i, r)$, and hence, $\sigma \in \cC(\cP_L(\lambda), r, d_{x_0})$. The lemma is proved.
\end{proof}

 Finally we have all the ingredients to calculate the point-wise convergence of $F_n(x_0)$. Recall 
 $x_0 \in \cA^\circ$ is the Lebesgue point of $f$ and $ \lambda  = f(x_0)$.
 Since the metric $d_{x_0}$ is translation invariant and $\cP(\lambda)$ is stationary, we have 
\[
\Ex[ \beta_k (\cP(\lambda)|_{W_{n,i_n}}, r, d_{x_0})] = \Ex[ \beta_k (\cP_L(\lambda), r, d_{x_0})]. 
\]
 So by  Lemmas~\ref{lem:1_estimate} and \ref{lem:2_estimate}, and by the Lebesgue differentation theorem, for almost everywhere $x_0$, as $n \to \infty$, 
\[
       F_n(x_0)  \to \frac{\Ex[ \beta_k (\cP_L(\lambda), r, d_{x_0})]}{L}.
\]
Therefore by BCT, as $n \to \infty$,
 \[
       \int_{\cA_n} F_n (x) dx  \to \int_{\cA} \frac {\Ex [\beta_k(\cP_{L}({f}(x)), r, d_{x})]} { L}  dx.
 \]
 Substitution of the value of the integral of $F_n$ from \eqref{int_form} in the above expression yields as $n \to \infty$, 
 \[
       \frac{1}{n} \sum_i\Ex [\beta_k(\tilde \cP_n|_{W_{n,i}} , r, \rho_n)] \to \int_{\cA} \frac {\Ex [\beta_k(\cP_{L}({f}(x)), r, d_{x})]} { L}  dx.
 \]
This completes the proof of the statement (a) of Lemma \ref{lem:SLLN for partition}. Since Lemmas~\ref{lem:1_estimate} and \ref{lem:2_estimate} also hold for $S_j(\cdot)$, the statement (b) follows similarly.

Let us conclude this subsection with the proof of Lemma \ref{lem:Ltoinfty}.
\begin{proof}[{Proof of Lemma {\rm\ref{lem:Ltoinfty}}}]
Let $x_0 \in \cA^\circ$ and $f(x_0) = \lambda$.  
By the inequality~\eqref{prop_metric_d}, 
\[ 
\cC(\cP_{L}(\lambda), r, d_{x_0}) \subset \cC(\cP_{L}(\lambda), r/c),
\]
where $c$ is the constant in \eqref{prop_metric_d}.
Thus, 
\[
         \Ex [\beta_k(\cP_{L}(\lambda), r, d_{x_0})] \leq   \Ex [S_k(\cP_{L}(\lambda), r, d_{x_0})] \leq \Ex [S_k(\cP_{L}(\lambda), r/c)].
\]
Therefore $ L^{-1}\Ex [\beta_k(\cP_{L}(\lambda), r, d_{x_0})] $ and $L^{-1}\Ex [S_j(\cP_{L}(\lambda), r, d_{x_0})]$   are uniformly bounded in $L$ by the property~(CP$2$).

Recall from Corollary~\ref{corhom} that, as $L \to \infty$,
\[
\frac{\Ex [S_j(\cP_{L}(\lambda), r, d_{x_0})]}{L} \to  \hat S_j^{(N)} \left(\frac{\lambda}{D(x_0)}, r\right) D(x_0),
\]
and from Theorem~$1.5$ in \cite{Duy-2016}, as $L \to \infty$,
\[
\frac {\Ex [\beta_k(\cP_{L}(\lambda), r, d_{x_0})]} {L}  = \frac {\Ex [\beta_k(\cP_{\tilde L}(\lambda/D(x_0)), r)]} {\tilde L} D(x_0) \to  \hat \beta_k \left(\frac{ \lambda}{D(x_0)}, r\right) D(x_0),
\]
where $\tilde L= L D(x_0)= Leb^{N}(\{\B_{x_0} x\colon x\in W_{L} \})$.
Hence, by BCT, we obtain the desired result.
\end{proof}

\subsection{Betti numbers in Euclidean spaces}
This subsection contains the proof of  Theorem~\ref{thm:euclidb}. 
\begin{proof}[{Proof of Theorem {\rm\ref{thm:euclidb}}} (for Poisson point processes)]
Since $Leb^N(\cA) = 0$, we assume $\cA$ is closed subset of $\R^N$ without loss of generality. 
Let $\cA_i$ be an increasing sequence of compact subsets of $\cA$ such that $\cup_i \cA_i = \cA$.
For each $i\geq 1$, define the function $ f_i\colon \R^N \to \R^+$ as 
    \[  f_i (x) =  \left\{ 
\begin{array}{ll}
       \min(f(x), i), & x\in \cA_i \\
     0, & \text{otherwise}. \\
\end{array} 
\right. \]
Let $\cP_{n}^{(i)}$ be the non-homogeneous Poisson point process on $\cA_i$ with intensity function $nf_i$. 
For each $i$, we have the following coupling 
\[
      \cP_{n} \dist \cP_{n}^{(i)} + \cP(n g_i), 
\]
where, for $x \in \R^N$, $g_i(x) = f(x) - f_i(x)$.
So by using Lemma~\ref{lem:Betti-estimate} for each $i$, we have
\[
    \left| \frac{\beta_k(\cP_{n}, r_n, \rho)}{n} - \frac{\beta_k(\cP_{n}^{(i)}, r_n, \rho)}{n}  \right| 
    \leq \sum_{j=k}^{k+1}\left( \frac{S_j(\cP_{n}, r_n, \rho)}{n} -\frac{S_j(\cP_{n}^{(i)}, r_n, \rho)}{n}\right).
\]
By first letting $n \to \infty$, and using Proposition \ref{pro:S,bdd} (since $f_i$ is bounded) and  Proposition \ref{thm: 4-order}, and then letting 
$i \to \infty$, we obtain that almost surely
\[
     \limsup_{n \to \infty}  \frac{\beta_k(\cP_n, r_n, \rho)}{n} \leq \lim_{i \to \infty} \int_{\cA_i} \hat {\beta}_k^{(N)} \left( \frac{f_i(x)}{D(x)}, r \right) D(x) dx, 
\]
\[
    \liminf_{n \to \infty}  \frac{\beta_k(\cP_n, r_n, \rho)}{n} \geq  \lim_{i \to \infty} \int_{\cA_i} \hat {\beta}_k^{(N)} \left( \frac{f_i(x)}{D(x)}, r \right) D(x) dx.
\]

Since $\hat {\beta}_k^{(N)}(0, r) = 0$ and $f_i(x) = 0$ for all $x \notin \cA_i$, the remaining task is to show that as $i \to \infty$, 
\begin{equation}\label{lim_hat_betti}
 \int_{\R^N} \hat {\beta}_k^{(N)} \left( \frac{f_i(x)}{D(x)}, r \right) D(x) dx \to \int_{\R^N} \hat {\beta}_k^{(N)} \left( \frac{f(x)}{D(x)}, r \right) D(x) dx.
\end{equation}
The limiting behavior \eqref{lim_hat_betti} can be easily obtained using DCT. Indeed, since 
the limiting constant $\hat \beta_k^{(N)}$ is continuous in the first parameter and $f_i(x) \to f(x)$ as $i \to \infty$, the point-wise limit holds.
For the uniform bound, since the $k$th Betti number is bounded from above by number of $k$-simplices, it follows from 
the definition of $\hat \beta_k$ and Corollary \ref{corhom} that for each $i$,
\[
       \hat \beta_k^{(N)}\left( \frac{f_i(x)}{D(x)}, r\right) \leq \hat S_k^{(N)}\left( \frac{f_i(x)}{D(x)}, r\right) \leq  \hat S_k^{(N)}\left( \frac{f(x)}{D(x)}, r\right),
\]
where the second inequality comes from the monotonicity of $\hat S_k^{(N)} (\lambda, r)$.  
The rightmost function in the above expression is integrable. This completes the proof.
\end{proof}

\begin{remark}\label{for mani}
It is clear that under the same setting as in Theorem~\ref{thm:euclidb}, as $n \to \infty$,
 \[
        \frac{\Ex[\beta_k(\fX_n, r_n, \rho)]}{n}  \to \int_{\R^N} \hat {\beta}_k^{(N)} \left( \frac{f(x)}{D(x)}, r \right) D(x) dx.
\]

 \end{remark}

\section{Manifold setting}\label{simplex-manifold}
Let us begin with the theory of integration of measurable functions on manifolds. This is an easy generalization of the integration of continuous functions considered in \cite{munkfold}.

Let $\cM \subset \R^N$ be a compact $m$-dimensional $C^1$ manifold and $\kappa$ be a measurable function on $\cM$.  Consider the case where the support of $\kappa$ can be covered by a single chart. Let $(V, \phi)$ be that chart. Since the support of $\kappa$ is 
compact and $\phi^{-1}$ is continuous, without loss of generality, we can assume that $V$ is bounded. Then the integral of $\kappa$ over $\cM$ is defined as
\[
      \int_{\cM} \kappa(z) dz = \int_{V^\circ} \kappa(\phi(x)) D_{\phi}(x) dx,
\]
provided that the right hand side is integrable.
Here, $V^\circ = V$ if $V$ is open in $\R^m$, otherwise $V^\circ = V \cap \bH^m_{+}$, where $\bH^m_{+}$ consists of $x \in \R^m$ for which $x_m > 0$, and 
$ D_{\phi}(x) = \det((\J_{\phi}^t (x)\J_{\phi} (x))^{1/2})$, where $\J_{\phi}^t (x)$  is the transpose of the Jacobian of $\phi$ at $x$.
The above integral is well-defined in the sense that it is independent of the choice of chart.

To define the integration in general, we need a concept of partition of unity \cite{munkfold}. 
Instead of mentioning it, we shall use the following lemma 
to carry out the integration of a measurable function $\kappa$ over $\cM$.

\begin{definition}
A subset $K$ of $\cM$ is said to have \textit{measure zero} in $\cM$ if it can be covered by countably many charts $\phi_i\colon V_i \to M$ such that the 
set 
\[
     K_i = \phi_i^{-1}(K \cap V_i)
\] 
has measure zero in $\R^m$ for each $i$.
\end{definition}

\begin{lemma}\label{integral_manifolds}
Suppose $(V_i, \phi_i)$, for $i = 1, 2, \ldots, l$, is a chart on $\cM$, such that $V_i$ is open in $\R^m$ and $\cM$ is the disjoint union of open sets
$\phi_1(V_1), \phi_2(V_2), \ldots, \phi_l(V_l)$
of $\cM$ and a set $K$ of measure zero in $\cM$. Then
\[
\int_\cM \kappa(z) dz = \sum_{i =1}^l \int_{V_i} \kappa(\phi_i(x)) D_{\phi_i} (x) dx.
\]
\end{lemma}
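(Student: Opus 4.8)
The plan is to reduce the identity to the single-chart formula that has already been defined and shown to be independent of the chosen chart. Writing $M_i=\phi_i(V_i)$, the hypotheses say that $\cM$ is the disjoint union $M_1\sqcup\cdots\sqcup M_l\sqcup K$ with $K$ of measure zero in $\cM$, so the finite decomposition
\[
\kappa=\sum_{i=1}^l \kappa_i,\qquad \kappa_i:=\kappa\cdot\one_{M_i},
\]
holds on $\cM\setminus K$, hence $dz$-almost everywhere.

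First I would fix a finite collection of charts covering $\cM$ (possible by compactness) together with a subordinate partition of unity $\{g_\beta\}$, in terms of which the integral $\int_\cM\,\cdot\,dz$ is defined. Since this integral is linear in the integrand and is unaffected by changing the integrand on a set of measure zero---both being immediate from the definition once one knows, as assumed, that the single-chart integral does not depend on the chosen chart---I obtain $\int_\cM\kappa\,dz=\sum_{i=1}^l\int_\cM\kappa_i\,dz$.

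Next I would evaluate each $\int_\cM\kappa_i\,dz$. Expanding via the partition of unity, each summand involves the function $g_\beta\kappa_i$, which vanishes outside $M_i\cap\supp g_\beta$ and is therefore supported in the common domain of the two charts $(V_i,\phi_i)$ and the $\beta$-th chart. By the assumed chart-independence (the change-of-variables identity that makes $D_{\phi}$ the correct density), I may replace the pullback through the $\beta$-th chart by the pullback through $\phi_i$; summing over $\beta$ and using $\sum_\beta g_\beta\equiv 1$ collapses the partition of unity and leaves
\[
\int_\cM\kappa_i\,dz=\int_{V_i}\kappa_i(\phi_i(x))\,D_{\phi_i}(x)\,dx=\int_{V_i}\kappa(\phi_i(x))\,D_{\phi_i}(x)\,dx,
\]
where I used $V_i^\circ=V_i$ (as $V_i$ is open in $\R^m$) and $\kappa_i(\phi_i(x))=\kappa(\phi_i(x))$ for $x\in V_i$. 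Summing over $i$ yields the claim.

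The main obstacle is organizational rather than analytic: one must check that the given geometric decomposition may legitimately replace an arbitrary partition of unity when computing the integral, and that the measure-zero set $K$ contributes nothing. Both points hinge on the measure-zero hypothesis on $K$---its preimage under every chart is null in $\R^m$, so the boundary effects between the pieces $M_i$ are invisible to the integral---and on the already-granted chart-independence of the single-chart integral, so no genuinely new computation is required.
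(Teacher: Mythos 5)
Your proof is correct, but it is worth pointing out that the paper itself gives no proof of this lemma: the authors state it as a known fact, citing it as an easy generalization of the integration theory for continuous functions in \cite{munkfold}, and they introduce the lemma precisely in order to \emph{avoid} invoking partitions of unity. What you have written is therefore not a variant of the paper's argument but a reconstruction of the standard Munkres-style proof that lies behind the citation: define $\int_\cM \cdot \, dz$ via a finite atlas and a subordinate partition of unity, use linearity and almost-everywhere invariance to split $\kappa$ as $\sum_{i} \kappa\, \one_{\phi_i(V_i)}$, and collapse the partition of unity on each piece using the chart-independence of the single-chart integral. This buys genuine self-containedness, at the cost of the partition-of-unity machinery the paper deliberately sidesteps. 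Two points in your sketch deserve more care. First, your claim that a set of measure zero in $\cM$ has null preimage under \emph{every} chart is stronger than the paper's definition, which only requires \emph{some} countable cover by charts with null preimages; to upgrade it you must observe that the transition maps between charts are $C^1$, hence locally Lipschitz, hence carry Lebesgue-null sets to Lebesgue-null sets. Second, since $\kappa$ is merely measurable (not continuous, as in \cite{munkfold}), the meaning of $\int_\cM \kappa \, dz$ under the partition-of-unity definition and the interchange of the finite sums over $i$ and $\beta$ with the integrals should be anchored either by taking $\kappa \ge 0$ and working with values in $[0,\infty]$ (which suffices here, as $\kappa$ is a density) or by assuming integrability, as the paper does in its single-chart definition. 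With those two standard facts supplied, your argument is complete and proves exactly what is asserted.
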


The set $\{(V_i, \phi_i): i=1,2, \ldots, l\}$ is also called an atlas for $\cM$. 
Lemma~\ref{integral_manifolds} gives us a way to calculate the integral of a measurable function over $\cM$.

To obtain results either for simplex counts or for Betti numbers in the manifold setting, we partition the manifold $\cM$ as follows.
Let $(V_y, \phi_y)$  be a chart for each $y \in \cM$. We may assume that $V_y$ is a  ball in $\R^m$ (or in $\bH^m$).
Choose $\delta_y > 0$ such that $B(y, \delta_y) \cap \cM \subset \phi_y(V_y)$ and $\partial (B(y, \delta_y/2) \cap \cM)$ has measure zero in $\cM$. Here the boundary is taken with respect to the topology on $\cM$.
Since $\cM$ is compact, we can find a finite index set $I$ such that
\[
       \cM = \bigcup_{i \in I}\left(B(y_i, \delta_{y_i}/2) \cap \cM\right).
\]
To simplify the notation, we replace $y_i$ by $i$ in the subscripts. Let $U_i = B(y_i, \delta_{y_i}/2) \cap \cM$.
Taking $M_1 = U_1$ and $M_i = U_i \backslash (\cup_{j=1}^{i-1} U_j$), we get the partition of $\cM$. 
Moreover, $\cM$ is the disjoint union of $\{M_i^\circ, i \in I\}$ and $\cup_{i\in I} \partial M_i$ whose measure is zero in $\cM$. 
Let $C_i = \phi_i^{-1}(M_i^\circ)$. 
 Let for each $i \in I$, $\rho_i(x, x') := \|\phi_i(x) -\phi_i(x')\|$ be the metric on $V_i=V_{y_i}$.

With the above partition of $\cM$, we show the following results for simplex counts in the manifold setting.
Note that the law of large numbers simplex counts in this setting may be proved by using arguments as in \cite{Penrose_mani}. 
However, to make the article self-contained, we discuss a proof here.

\begin{lemma}\label{simp_count}
Assume that $\int_{\cM} \kappa(z)^{j+1} dz < \infty$, and $\lim_{n \to \infty} r_n = 0$. Then 
\[
      \lim_{n \to \infty} r_n^{-mj} n^{-(j+1)} \Ex[S_j(\cQ_n, r_n)] = A_j^{(m)}(1) \int_{\cM} \kappa(z)^{j+1} dz.
\]
\end{lemma}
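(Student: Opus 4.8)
The plan is to reduce the computation to the Euclidean estimates of Section~\ref{section_for_simplex}, working chart by chart through the partition $\{M_i\}_{i\in I}$ of $\cM$ constructed above. First I would record a Palm formula on the manifold: since $\cQ_n$ is a Poisson point process on $\cM$ with intensity measure $n\kappa(z)\,dz$, the direct analogue of Theorem~\ref{palm1} for $\cQ_n$ (obtained, for instance, by applying Theorem~\ref{palm1} within charts) gives
\[
\Ex[S_j(\cQ_n, r_n)] = \frac{n^{j+1}}{(j+1)!}\int_{\cM^{j+1}} h_{j, r_n}(z_0, z_1, \ldots, z_j)\,\prod_{\ell=0}^j \kappa(z_\ell)\, dz_0\, dz_1\cdots dz_j,
\]
where $h_{j,r_n}$ is the indicator that the $j+1$ points span a $j$-simplex of $\cC(\cQ_n, r_n)$.

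Next I would decompose the base-point integral as $\int_{\cM}dz_0 = \sum_{i\in I}\int_{M_i^\circ}dz_0$, which is legitimate because $\cup_{i}\partial M_i$ has measure zero in $\cM$. The crucial localization is that, for $z_0\in M_i^\circ$, the indicator $h_{j,r_n}$ forces $z_1,\ldots,z_j$ to lie within distance $O(r_n)$ of $z_0$; by the choice of $\delta_{y_i}$ made in the construction of the partition, for $n$ large enough (uniformly in the finite index set $I$) this neighborhood is contained in $\phi_i(V_i)$. Hence in the $i$-th piece every vertex lies in the single chart $\phi_i(V_i)$, and I can change variables $z_\ell = \phi_i(x_\ell)$ with $dz_\ell = D_{\phi_i}(x_\ell)\,dx_\ell$. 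Writing $f_i := (\kappa\circ\phi_i)\,D_{\phi_i}$ for the pulled-back density on $V_i\subset\R^m$ and using $h_{j,r_n}(\phi_i(x_0),\ldots,\phi_i(x_j)) = h_{j,r_n,\rho_i}(x_0,\ldots,x_j)$, the $i$-th contribution to $r_n^{-mj}n^{-(j+1)}\Ex[S_j(\cQ_n, r_n)]$ takes exactly the form $I_1^{n,i}+I_2^{n,i}$ appearing in the proof of Proposition~\ref{general_simplicial}, but in dimension $m$, with density $f_i$, metric $\rho_i$, base-point domain $C_i$, and $D(x) = D_{\phi_i}(x) = \det(\J_{\phi_i}^t(x)\J_{\phi_i}(x))^{1/2}$.

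I would then invoke the Euclidean lemmas essentially verbatim, since their proofs use only a pointwise limit at interior base points together with a dominated-convergence argument, so restricting the outer integral to $C_i$ changes nothing (each $x_0\in C_i$ is interior to the open chart $V_i$): Lemma~\ref{lem: ex_conv_sim2} gives $I_2^{n,i}\to 0$, and Lemma~\ref{lem: ex_conv_sim} gives $I_1^{n,i}\to A_j^{(m)}(1)\int_{C_i} f_i(x)^{j+1}/D_{\phi_i}(x)^{j}\,dx$. The integrability hypothesis $\int_{V_i} f_i^{\,j+1}\,dx<\infty$ follows from $\int_{\cM}\kappa^{j+1}\,dz<\infty$, because $D_{\phi_i}$ is bounded on the compact closure of $V_i$, whence $\int_{V_i} f_i^{\,j+1}\,dx \le \|D_{\phi_i}\|_\infty^{\,j}\int_{\cM}\kappa^{j+1}\,dz$. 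Since $f_i^{\,j+1}/D_{\phi_i}^{\,j} = (\kappa\circ\phi_i)^{j+1}D_{\phi_i}$, the change-of-variables formula for manifold integration identifies $\int_{C_i} f_i^{\,j+1}/D_{\phi_i}^{\,j}\,dx = \int_{M_i^\circ}\kappa(z)^{j+1}\,dz$. Summing over the finitely many pieces and applying Lemma~\ref{integral_manifolds} with the atlas $\{(C_i,\phi_i)\}$ and the measure-zero set $\cup_i\partial M_i$ yields $A_j^{(m)}(1)\sum_i\int_{M_i^\circ}\kappa^{j+1}\,dz = A_j^{(m)}(1)\int_{\cM}\kappa^{j+1}\,dz$, which is the claim.

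The main obstacle, and the step that requires the partition to be set up precisely, is the localization: I must guarantee that for every base point in $M_i^\circ$ the entire small simplex is swept into a single chart $\phi_i(V_i)$, uniformly in $i$, so that the single-chart change of variables is valid for all large $n$. This is exactly what the balls $B(y_i,\delta_{y_i})$ with $B(y_i,\delta_{y_i})\cap\cM\subset\phi_i(V_i)$ secure. A secondary point to verify is that the per-piece integrals genuinely reproduce the $I_1^{n,i}+I_2^{n,i}$ structure with $N$ replaced by $m$; once the density domain of the inner integral is allowed to be the chart $V_i$ (rather than only $C_i$), this is immediate, since the integrand is supported in an $O(r_n)$-neighborhood of the interior base point and the pointwise limits of Lemmas~\ref{lem: ex_conv_sim} and~\ref{lem: ex_conv_sim2} are unaffected.
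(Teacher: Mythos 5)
Your proposal is correct and takes essentially the same approach as the paper's proof: partition $\cM$ into the pieces $M_i^\circ$, use $r_n \le \min_i \delta_i/2$ to localize every simplex with base point in $M_i^\circ$ into the single chart $\phi_i(V_i)$, pull back through $\phi_i$ so that the $i$-th contribution becomes the Euclidean expression of Proposition~\ref{general_simplicial} in dimension $m$ with density $f_i=(\kappa\circ\phi_i)D_{\phi_i}$ and metric $\rho_i$ (which satisfies (P1)--(P2) by Section~\ref{motivation}), apply the arguments of Lemmas~\ref{lem: ex_conv_sim} and~\ref{lem: ex_conv_sim2}, and reassemble via Lemma~\ref{integral_manifolds}; the only cosmetic difference is that you change variables before splitting into $I_1+I_2$, whereas the paper splits first and carries the Jacobian weights inside the inner integrals. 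One inessential slip: the boundedness of $D_{\phi_i}$ should be invoked on the compact set $\bar C_i \subset V_i$ (where it is continuous, as the paper does), or on a compact neighborhood of $\bar C_i$ inside $V_i$ containing the localized integrand, rather than on $\overline{V_i}$, to which the Jacobian of $\phi_i$ need not extend.
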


\begin{proof}
Similar to the proof of Proposition~\ref{general_simplicial}, we can write $r_n^{-mj} n^{-(j+1)} \Ex[S_j(\cQ_n, r_n)]$ as a sum of two terms $I_1^n$ and $I_2^n$. Moreover, using Lemma~\ref{integral_manifolds},  
\begin{align*}
    I_1^n &=  \frac{r_n^{-mj}}{(j+1)!}  \sum_{i \in I} \int_{M_i^\circ} \left( \int _{\cM^j}  h_{j, r_{n}} (z_0, \z) d\z \right)\kappa(z_0)^{j+1} dz_0,\\
      I_2^n  &= \frac{r_n^{-mj}}{(j+1)!} \sum_{i \in I} \int_{M_i^\circ}\left(\int_{\cM^{j}} h_{j, r_{n}} (z_0, \z) \left( \prod_{l=1}^j \kappa(z_l) -  \kappa(z_0)^{j} \right)d\z \right)\kappa(z_0) dz_0.
\end{align*}
Let $n' \in \N$ such that for all $n \geq n'$,  $r_n  \leq \min_{i \in I} (\delta_i/2)$. Then when $z_0 \in M_i^\circ \subset U_i$, the indicator function $h_{j,r_n}(z_0, \z)$ is equal to $1$ only if $z_1, \ldots, z_j \in  B(y_i, \delta_i) \cap \cM\subset \phi(V_i)$. Therefore, by the definition of integration in case of single chart, we obtain that, for all $n \geq n'$, 
\begin{align*}
    I_1^n =  \frac{r_n^{-mj}}{(j+1)!}  &\sum_{i \in I} \int_{V_i} \left( \int _{V_i^j}  h_{j, r_{n}, \rho_i} (x_0, \x) \prod_{l=1}^j D_{\phi_i}(x_i)d\x \right)\\
     &\times \kappa(\phi(x_0))^{j+1} D_{\phi_i}(x_0) \one_{C_i}(x_0) dx_0,
\end{align*}
\begin{align*}
  I_2^n = \frac{r_n^{-mj}}{(j+1)!} &\sum_{i \in I} \int_{V_i}\left(\int_{V_i^{j}} h_{j, r_{n}, \rho_i} (x_0, \x) \left( \prod_{l=1}^j \kappa(\phi_i(x_l)) -  \kappa(\phi_i(x_0))^{j} \right)
  \prod_{l=1}^j D_{\phi_i}(x_i) d\x \right)\\ 
  &\times \kappa(\phi(x_0))^{j+1} D_{\phi_i}(x_0) \one_{C_i}(x_0) dx_0,
\end{align*}
where $\one_{C_i}(x_0)$ is the indicator function of $C_i$.

Now all the arguments used in the proofs of Lemmas \ref{lem: ex_conv_sim} and \ref{lem: ex_conv_sim2} can be applied to show that $\lim_{n \to \infty} I_1^n = A_j^{(m)}(1) \int_{\cM} \kappa(z)^{j+1} dz$
 and  $\lim_{n \to \infty} I_2^n = 0$ respectively. 
This is because  for each $i$, the metric $\rho_i$ satisfies the properties (P$1$) and (P$2$) as we have shown in Section~\ref{motivation}, and the function $D_{\phi_i}(\cdot)$ is continuous on the compact subset $\bar C_i \subset \R^m$. 
\end{proof}

In the thermodynamic regime, the above lemma can be restated as follows:
\begin{corollary}\label{mani_simp}
 Assume that   $\int_{\cM} \kappa(z)^{j+1} dz < +\infty$ and $  \lim _{n \to \infty} n^{1/m} r_n = r \in (0, \infty)$. Then 
\[
   \lim_{n \to \infty} \frac{\Ex[S_j(\cQ_n, r_n)]}{n} =  A_j^{(m)} (r) \int_{\cM} \kappa(z)^{j+1} dz.
\]
\end{corollary}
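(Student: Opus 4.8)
The plan is to obtain Corollary~\ref{mani_simp} directly from Lemma~\ref{simp_count} by a scaling argument, in exactly the same way that Corollary~\ref{corollary} is deduced from Proposition~\ref{general_simplicial} in the Euclidean setting. First I would rewrite the quantity of interest as
\[
\frac{\Ex[S_j(\cQ_n, r_n)]}{n} = \big(n^{1/m} r_n\big)^{mj} \cdot \Big( r_n^{-mj} n^{-(j+1)} \Ex[S_j(\cQ_n, r_n)] \Big),
\]
which is an exact identity, since $n^{j} r_n^{mj} = (n^{1/m} r_n)^{mj}$ and $n \cdot n^{j} = n^{j+1}$.

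Then I would let $n \to \infty$ and handle the two factors separately. By the thermodynamic assumption $n^{1/m} r_n \to r$, the first factor converges to $r^{mj}$. Since $n^{1/m} r_n \to r \in (0,\infty)$ forces $r_n \to 0$, Lemma~\ref{simp_count} applies and shows that the second factor converges to $A_j^{(m)}(1) \int_{\cM} \kappa(z)^{j+1} dz$. Hence the product converges to $r^{mj} A_j^{(m)}(1) \int_{\cM} \kappa(z)^{j+1} dz$.

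Finally I would identify this limit with the claimed expression via the scaling identity $A_j^{(m)}(r) = r^{mj} A_j^{(m)}(1)$, which is immediate from the definition $A_j^{(m)}(r) = \frac{r^{mj}}{(j+1)!} \int_{(\R^m)^j} h_j(0, \x)\, d\x$ (taking $N = m$). No genuine obstacle arises: the entire analytic content of the corollary is already carried by Lemma~\ref{simp_count}, and this deduction is purely a change of normalization, passing from the $r_n^{-mj} n^{-(j+1)}$ scaling to the $n^{-1}$ scaling that is natural in the thermodynamic regime.
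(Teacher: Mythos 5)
Your proposal is correct and matches the paper's own reasoning: the paper treats Corollary~\ref{mani_simp} as a direct restatement of Lemma~\ref{simp_count} in the thermodynamic regime, which is precisely your rescaling $n^{-1} = (n^{1/m}r_n)^{mj}\, r_n^{-mj} n^{-(j+1)}$ combined with the scaling identity $A_j^{(m)}(r) = r^{mj} A_j^{(m)}(1)$.
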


Note that the proof of the strong law for simplex counts in the manifold setting is similar to that of Proposition~\ref{thm: 4-order}. 
So we state it without proof. 
Assume that $\int_{\cM} \kappa(z)^{4j+1} dz < +\infty$ and $  \lim _{n \to \infty} n^{1/m} r_n = r \in (0, \infty)$. Then 
as $n \to \infty$,
\begin{equation}\label{almost_mani}
\frac{S_j(\cQ_n, r_n)}{n} \to A_j^{(m)} (r) \int_{\cM} \kappa(z)^{j+1} dz~\text{a.s.}
\end{equation}

We have now all the required results to prove Theorem~\ref{thm:manifoldb} (for Poisson point processes). Before presenting its proof, we give the proof of the statement 
(a) of Lemma~\ref{conversion_poisson_binomial}.
\begin{proof}[{Proof of Lemma~{\rm\ref{conversion_poisson_binomial}} \rm{(a)}}]
From the proof of the statement (b) of this lemma, it is enough to show here that the volume of $B(z_0, r)\cap \cM$ is bounded by $c r^m$, for $r \le r_0$, where $r_0$ and $c$ are constants. Assume that $z_0 \in M_i$. Let $x_0 = \phi^{-1}(z_0)$. Then for $r \le  \min_{i \in I} \delta_i/2$, 
\[
	\int_{B(z_0, r)\cap \cM} dz = \int_{B_{\rho_i}(x_0, r)} D_{\phi_i}(x)dx.
\]
The required property is now trivial because the metric $\rho_i$ satisfies the property (P2) and the set $I$ is finite. The proof is complete.
\end{proof}

We conclude this article with the proof of Theorem~\ref{thm:manifoldb}.
 \begin{proof}[{Proof of Theorem~{\rm\ref{thm:manifoldb}}} (for Poisson point processes)]
Consider the limiting behavior of $\beta_k(\cQ_n|_{M_i^\circ}, r_n)$. 
 Define 
\[
 	f_i(x) = \kappa(\phi_i(x)) D_{\phi_i}(x) \one_{C_i}(x).
\]
Since $D_{\phi_i}(x)$ is a continuous function on the compact set $\bar C_i$, 
it follows from the assumption on $\kappa$ that $\int_{V_i} f_i(x)^j dx < +\infty$ for all $j \in \N$. Let $\cP_n = \phi^{-1}(\cQ_n|_{M_i^\circ})$. Then $\cP_n$  becomes a Poisson point process on $V_i$ with intensity function $nf_i$. It is clear that  
  \[
              \beta_k(\cQ_n|_{M_i^\circ}, r_n) = \beta_k(\cP_n, r_n, \rho_i),
  \] 
where recall that  $\rho_i(x,y) = \|\phi_i(x) - \phi_i(y)\|$, $x,y \in V_i$. Thus, by Theorem~\ref{thm:euclidb} for Poisson point processes, as $n \to \infty$ with $n^{1/m} r_n \to r$,
\[
     \frac{\beta_k(\cQ_n|_{M_i^\circ}, r_n)}{n} \to \int_{V_i} \hat \beta_k^{(m)} \left( \frac{f(x)}{D_{\phi_i}(x)}, r \right) D_{\phi_i}(x) dx = \int_{M_i^\circ} \hat \beta_k^{(m)}\left( \kappa(z), r \right)  dz \text{ a.s.}\\
\]
    
By Lemma \ref{lem:Betti-estimate}, 
\[
   \left| \frac{\beta_k(\cQ_n, r_n)}{n}  -  \sum_{i \in I} \frac{\beta_k(\cQ_n|_{M_i^\circ}, r_n)}{n} \right| \leq \sum_{j=k}^{k+1} \left(\frac{S_j(\cQ_n, r_n)}{n} - \sum_{i\in I}  \frac{S_j(\cQ_n|_{M_i^\circ}, r_n)} {n}\right).
\]
Let $n \to \infty$ with $n^{1/m} r_n \to r$. Then from the strong law \eqref{almost_mani}, the left hand side of the above inequality converges to  zero almost surely. Moreover, 
since $I$ is finite,
\begin{equation*}\label{mani_eq2}
       \sum_{i\in I} \frac{\beta_k(\cQ_n|_{M_i^\circ}, r_n)}{n}  \to  \sum_{i\in I} \int_{M_i^\circ} \hat \beta_k^{(m)}(\kappa(z), r) dz =\int_{\cM} \hat \beta_k^{(m)}(\kappa(z), r) dz ~\text{a.s.}
\end{equation*}
This completes the proof of Theorem~\ref{thm:manifoldb}.
\end{proof}

\section*{Acknowledgements}
The authors are thankful to Prof.~Tomoyuki Shirai for many useful discussions. This work is 
partially supported by JST CREST Mathematics (15656429). 
A.G.~is fully supported by JICA-Friendship Scholarship. K.D.T.~is partially supported by JSPS KAKENHI Grant Numbers JP16K17616.     
 K.T.~is partially supported by JSPS KAKENHI Grant Numbers 18K13426.

\begin{footnotesize}

\end{footnotesize}
\Addresses

\begin{thebibliography}{10}
\providecommand{\url}[1]{{#1}}
\providecommand{\urlprefix}{URL }
\expandafter\ifx\csname urlstyle\endcsname\relax
  \providecommand{\doi}[1]{DOI~\discretionary{}{}{}#1}\else
  \providecommand{\doi}{DOI~\discretionary{}{}{}\begingroup
  \urlstyle{rm}\Url}\fi

\bibitem{billing}
Billingsley, P.: Probability and measure.
\newblock Wiley Series in Probability and Statistics. John Wiley \& Sons, Inc.,
  Hoboken, NJ (2012)

\bibitem{BH}
Bobrowski, O., Kahle, M.: Topology of random geometric complexes: a survey.
\newblock Journal of Applied and Computational Topology  (2018).
\newblock \doi{10.1007/s41468-017-0010-0}.
\newblock \urlprefix\url{https://doi.org/10.1007/s41468-017-0010-0}

\bibitem{bob}
Bobrowski, O., Mukherjee, S.: The topology of probability distributions on
  manifolds.
\newblock Probab. Theory Related Fields \textbf{161}(3-4), 651--686 (2015).
\newblock \doi{10.1007/s00440-014-0556-x}.
\newblock \urlprefix\url{http://dx.doi.org/10.1007/s00440-014-0556-x}

\bibitem{bobrowski2017random}
Bobrowski, O., Oliveira, G.: Random \v{C}ech complexes on riemannian manifolds.
\newblock arXiv preprint arXiv:1704.07204  (2017)

\bibitem{nerve_lemma}
Borsuk, K.: On the imbedding of systems of compacta in simplicial complexes.
\newblock Fund. Math. \textbf{35}, 217--234 (1948).
\newblock \doi{10.4064/fm-35-1-217-234}.
\newblock \urlprefix\url{https://doi.org/10.4064/fm-35-1-217-234}

\bibitem{carl}
Carlsson, G.: Topology and data.
\newblock Bull. Amer. Math. Soc. (N.S.) \textbf{46}(2), 255--308 (2009).
\newblock \doi{10.1090/S0273-0979-09-01249-X}.
\newblock \urlprefix\url{http://dx.doi.org/10.1090/S0273-0979-09-01249-X}

\bibitem{chen}
Chen, L.M.: Topological data analysis.
\newblock In: Mathematical problems in data science, pp. 101--124. Springer,
  Cham (2015)

\bibitem{edel}
Edelsbrunner, H., Harer, J.L.: Computational topology.
\newblock American Mathematical Society, Providence, RI (2010).
\newblock An introduction

\bibitem{ghrist}
Ghrist, R.: Barcodes: the persistent topology of data.
\newblock Bull. Amer. Math. Soc. (N.S.) \textbf{45}(1), 61--75 (2008).
\newblock \doi{10.1090/S0273-0979-07-01191-3}.
\newblock \urlprefix\url{http://dx.doi.org/10.1090/S0273-0979-07-01191-3}

\bibitem{hatch}
Hatcher, A.: Algebraic topology.
\newblock Cambridge University Press, Cambridge (2002)

\bibitem{dhs}
Hiraoka, Y., Shirai, T., Trinh, K.D.: Limit theorems for persistence diagrams.
\newblock Ann. Appl. Probab. (to appear)  (arXiv:1612.08371)

\bibitem{kahle}
Kahle, M.: Random geometric complexes.
\newblock Discrete Comput. Geom. \textbf{45}(3), 553--573 (2011).
\newblock \doi{10.1007/s00454-010-9319-3}.
\newblock \urlprefix\url{http://dx.doi.org/10.1007/s00454-010-9319-3}

\bibitem{kahle_lim}
Kahle, M., Meckes, E.: Limit theorems for {B}etti numbers of random simplicial
  complexes.
\newblock Homology Homotopy Appl. \textbf{15}(1), 343--374 (2013).
\newblock \doi{10.4310/HHA.2013.v15.n1.a17}.
\newblock \urlprefix\url{http://dx.doi.org/10.4310/HHA.2013.v15.n1.a17}

\bibitem{Poisson}
Meester, R., Roy, R.: Continuum percolation, \emph{Cambridge Tracts in
  Mathematics}, vol. 119.
\newblock Cambridge University Press, Cambridge (1996).
\newblock \doi{10.1017/CBO9780511895357}.
\newblock \urlprefix\url{http://dx.doi.org/10.1017/CBO9780511895357}

\bibitem{munkfold}
Munkres, J.: Analysis On Manifolds. Adv. Books Classics Series.
\newblock Westview Press (1997)

\bibitem{Munkres-1984}
Munkres, J.R.: Elements of algebraic topology.
\newblock Addison-Wesley Publishing Company, Menlo Park, CA (1984)

\bibitem{owada2018}
Owada, T., Thomas, A.: Limit theorems for process-level betti numbers for
  sparse, critical, and poisson regimes.
\newblock arXiv preprint arXiv:1809.05758  (2018)

\bibitem{Penrose-book}
Penrose, M.: Random geometric graphs, \emph{Oxford Studies in Probability},
  vol.~5.
\newblock Oxford University Press, Oxford (2003).
\newblock \doi{10.1093/acprof:oso/9780198506263.001.0001}.
\newblock
  \urlprefix\url{http://dx.doi.org/10.1093/acprof:oso/9780198506263.001.0001}

\bibitem{pen2007law}
Penrose, M.D.: Laws of large numbers in stochastic geometry with statistical
  applications.
\newblock Bernoulli \textbf{13}(4), 1124--1150 (2007).
\newblock \doi{10.3150/07-BEJ5167}.
\newblock \urlprefix\url{http://dx.doi.org/10.3150/07-BEJ5167}

\bibitem{pen2003weak}
Penrose, M.D., Yukich, J.E.: Weak laws of large numbers in geometric
  probability.
\newblock Ann. Appl. Probab. \textbf{13}(1), 277--303 (2003).
\newblock \doi{10.1214/aoap/1042765669}.
\newblock \urlprefix\url{http://dx.doi.org/10.1214/aoap/1042765669}

\bibitem{Penrose_mani}
Penrose, M.D., Yukich, J.E.: Limit theory for point processes in manifolds.
\newblock Ann. Appl. Probab. \textbf{23}(6), 2161--2211 (2013).
\newblock \doi{10.1214/12-AAP897}.
\newblock \urlprefix\url{https://doi.org/10.1214/12-AAP897}

\bibitem{Robins-2006}
Robins, V.: Betti number signatures of homogeneous {P}oisson point processes.
\newblock Phys. Rev. E (3) \textbf{74}(6), 061,107, 11 (2006).
\newblock \doi{10.1103/PhysRevE.74.061107}.
\newblock \urlprefix\url{https://doi.org/10.1103/PhysRevE.74.061107}

\bibitem{Duy-2016}
Trinh, K.D.: A remark on the convergence of {B}etti numbers in the
  thermodynamic regime.
\newblock Pac. J. Math. Ind. \textbf{9}, Art. 4, 7 (2017).
\newblock \doi{10.1186/s40736-017-0029-0}.
\newblock \urlprefix\url{http://dx.doi.org/10.1186/s40736-017-0029-0}

\bibitem{Trinh-2018}
Trinh, K.D.: On central limit theorems in stochastic geometry.
\newblock arXiv preprint arXiv:1804.02823  (2018)

\bibitem{yogi}
Yogeshwaran, D., Adler, R.J.: On the topology of random complexes built over
  stationary point processes.
\newblock Ann. Appl. Probab. \textbf{25}(6), 3338--3380 (2015).
\newblock \doi{10.1214/14-AAP1075}.
\newblock \urlprefix\url{http://dx.doi.org/10.1214/14-AAP1075}

\bibitem{ysa}
Yogeshwaran, D., Subag, E., Adler, R.J.: Random geometric complexes in the
  thermodynamic regime.
\newblock Probab. Theory Related Fields \textbf{167}(1-2), 107--142 (2017).
\newblock \doi{10.1007/s00440-015-0678-9}.
\newblock \urlprefix\url{http://dx.doi.org/10.1007/s00440-015-0678-9}

\end{thebibliography}
\end{document}